\newtheorem{theorem}{Theorem}[section]
\newtheorem{proposition}[theorem]{Proposition}
\newtheorem{corollary}[theorem]{Corollary}
\theoremstyle{definition}
\newtheorem{remark}[theorem]{Remark}
\newtheorem{definition}[theorem]{Definition}
\def\[#1\]{\begin{align*}#1\end{align*}}
\newcommand{\hM}{\hat{M}}
\newcommand{\hn}{\hat{n}}
\newcommand{\hg}{\hat{g}}
\newcommand{\hx}{\hat{x}}
\newcommand{\hX}{\hat{X}}
\newcommand{\hY}{\hat{Y}}
\newcommand{\hZ}{\hat{Z}}
\newcommand{\hR}{\hat{R}}
\newcommand{\hh}{\hat{h}}
\newcommand{\hH}{\hat{H}}
\newcommand{\hf}{\hat{f}}
\newcommand{\hF}{\hat{F}}
\newcommand{\hU}{\hat{U}}
\newcommand{\hN}{\hat{N}}
\newcommand{\qmhm}{Q(M,\hat{M})}
\newcommand{\tsmthm}{T^* M \otimes T\hat{M}}
\newcommand{\tsxmthxm}{T^*_{x} M \otimes T_{\hx}\hat{M}}
\newcommand{\dns}{\mathscr{D}_{NS}}
\newcommand{\odns}{\mathcal{O}_{\mathscr{D}_{NS}}}
\newcommand{\dr}{\mathscr{D}_{R}}
\newcommand{\odr}{\mathcal{O}_{\mathscr{D}_{R}}}
\newcommand{\lns}{\mathscr{L}_{NS}}
\newcommand{\lr}{\mathscr{L}_{R}}
\newcommand{\pq}{\pi_Q}
\newcommand{\pqm}{\pi_{Q,M}}
\newcommand{\rarrow}{\rightarrow}
\newcommand{\tbar}{\overline{T}}
\newcommand{\nablabar}{\overline{\nabla}}
\newcommand{\hnabla}{\hat{\nabla}}
\newcommand{\hgamma}{\hat{\gamma}}
\newcommand{\Xbar}{\overline{X}}
\newcommand{\sbar}{\overline{S}}
\newcommand{\hT}{\hat{T}}
\newcommand{\hS}{\hat{S}}
\newcommand{\Oh}{\mathcal{O}}
\newcommand{\onq}{|_{q}}
\newcommand{\onqz}{|_{q_0}}
\newcommand{\onxz}{|_{x_0}}
\newcommand{\onhxz}{|_{\hat{x}_0}}
\newcommand{\q}{q=(x,\hat{x};A)}
\newcommand{\qz}{q_0=(x_0,\hat{x}_0;A_0)}
\newcommand{\qo}{q_1=(x_1,\hat{x}_1;A_1)}
\newcommand{\Rol}{\mathsf{Rol}}
\newcommand{\IM}{\mathrm{im}}
\newcommand{\id}{\mathrm{id}}
\newcommand{\VF}{\mathrm{VF}}
\newcommand{\R}{\mathbb{R}}
\newcommand{\mc}{\mathcal}
\newcommand{\SO}{\mathrm{SO}}
\newcommand{\qmatrix}[1]{\left(\begin{matrix}#1\end{matrix}\right)}
\begin{document}

\title{Rolling Manifolds of Different Dimensions}

\author{Amina MORTADA\thanks{amina\underline{\;}mortada2010@hotmail.com, Universit\'e Paris-Sud 11, CNRS and STITS, Gif-sur-Yvette, 91192, France and Universit\'e Libanaise, EDST, Hadath, Liban} \and Petri KOKKONEN\thanks{petri.kokkonen@varian.com, Varian Medical Systems Finland} \and Yacine CHITOUR\thanks{yacine.chitour@lss.supelec.fr, Universit\'e Paris-Sud 11, CNRS and Sup\'elec, Gif-sur-Yvette, 91192, France}
}

\date{\today}

\maketitle

\begin{abstract}
If  $(M,g)$ and $(\hM,\hg)$ are two smooth connected complete oriented Riemannian manifolds of dimensions $n$ and $\hn$ respectively,
we model the rolling of $(M,g)$ onto $(\hM,\hg)$ as a driftless control affine systems describing two possible constraints of motion: the first rolling motion $\Sigma_{NS}$ captures the no-spinning condition only and the second rolling motion $\Sigma_{R}$ corresponds to rolling without spinning nor slipping. Two distributions of dimensions  $(n + \hn)$ and $n$, respectively, are then associated to the rolling motions $\Sigma_{NS}$ and $\Sigma_{R}$ respectively. This generalizes the rolling problems considered in \cite{ChitourKokkonen1} where both manifolds had the same dimension.
The controllability issue is then addressed for  both $\Sigma_{NS}$ and $\Sigma_{R}$ and completely solved for  $\Sigma_{NS}$.  As regards to $\Sigma_{R}$, basic properties for the reachable sets are provided as well as the complete study of the case $(n,\hn)=(3,2)$ and some sufficient conditions for non-controllability.
\end{abstract}

\tableofcontents

\section{Introduction}
In this paper, we continue the study initiated in \cite{ChitourKokkonen1}

of the rolling of two smooth connected complete oriented Riemannian manifolds $(M,g)$ and $(\hM,\hg)$ of dimensions $n$ and $\hn$ respectively, where the integers $n$ and $\hn$ are now not necessarily equal. Two sets of constraints are usually considered, namely the rolling without spinning on the one hand and the rolling without spinning nor spinning on the other hand.  Most of the existing work on the subject concerns the case where both manifolds  have the same dimension, i.e., $n=\hn$, cf. \cite{agrachev99,bryant-hsu,
CGK13:A,CGK13:B,ChitourKokkonen,kokkonen12:A,kokkonen12:B,norway,
marigo-bicchi,sharpe97} and references therein. In particular, the state space $Q$ for both types of rolling models is a fiber bundle over $M\times \hM$ where the typical fiber
consists of all the \emph{partial isometries}  $A : T_x M \rarrow T_{\hx} \hM$ (orientation preserving, if applicable).

The kinematic constraints of rolling without spinning simply says geometrically that parallel vector fields along a curves on $M$ are mapped to parallel vector fields along curves on $\hM$. Thus the smooth no-spinning distribution $\dns$ on $Q$ is defined by making use of the derivative of the parallel transport of the pair of vector fields $(X,\hat{X})$ along curves on $(M,\hM)$.

Then the distribution $\dr$ satisfying additionally  the kinematic constraint of no-slipping is obtained as a sub-distribution of $\dns$ by imposing $\hat{X}=AX$. Two driftless control systems $\Sigma_{NS}$ and $\Sigma_{R}$ are therefore defined on $Q$,  associated respectively with $\dns$ and to $\dr$, (see \cite{agrachev04,jurd} for more details on control systems).

The main issue consists in addressing the problem of complete controllability of $\Sigma_{NS}$ and $\Sigma_{R}$ in terms of the geometries of $M$ and $\hM$. That means to provide necessary and/or sufficient conditions so that, for every pair of points $(q_{init},q_{final})$ in $Q$, there exists a curve $\gamma$ steering $q_{init}$ to $q_{final}$ and tangent to $\dns$ (respectively to $\dr$). The attention is then focused on the details of the ``rolling'' orbits, which are the reachable sets associated to the distributions $\dns$ and $\dr$. One is able to provide a complete answer for $\Sigma_{NS}$ (cf. \cite{CGK13:A,CGK13:B}) by directly describing the corresponding rolling orbits in terms of the holonomy groups of $(M,g)$ and $(\hM,\hg)$. As for $\Sigma_{R}$, the situation is far more complicated. The standard strategy consists in computing the iterated Lie brackets of sections of $\dr$ and in verifying whether they span the tangent space of $Q$ at each point. By calculating the first Lie bracket, one comes to the definition of a tensor $\Rol$ called the {\it rolling curvature}, cf. \cite{ChitourKokkonen1}, which can be seen as the difference between the curvature tensor of $M$ and that of $\hM$. Hence, in the two dimensional case, the rolling curvature essentially reduces to the difference between Gaussian curvatures. The higher order Lie brackets involve linear combinations of (higher order) covariant derivatives of the rolling curvature $\Rol$ and it seems impossible to calculate in general the dimension of the evaluation at each point of the Lie algebra of these iterated Lie brackets. However, satisfactory answers for complete controllability of $\Sigma_{R}$ were given in the case where both manifolds are three dimensional (cf. \cite{ChitourKokkonen1}) or if one of them is of constant curvature (cf. Section 6 in \cite{ChitourKokkonen}). Moreover, one is even able in the two dimensional case to provide motion algorithms for $\Sigma_R$, cf. \cite{ACL,chelouah01,mar-bic2}.

In the present paper, we extend the constructions and basic results of \cite{ChitourKokkonen1} to the case where $(M,g)$ and $(\hM,\hg)$ do not have necessarily the same dimension, i.e., $n$ is not necessarily equal to $\hn$. The first modification consists in generalizing the definition of the state space $Q$ to the following\\
$
\begin{array}{clll}
(i) & \text{if } n \leq \hn, & \qmhm:= &\!\!\! \{A \in \tsmthm \mid \hg(AX,AY) = g (X,Y), \; X,Y \in T_{x} M, x \in M \},\\
\\
(ii) & \text{if } n \geq \hn, & \qmhm:= &\!\!\! \{A \in \tsmthm \mid \hg(AX,AY) = g (X,Y), \; X,Y \in (\ker A)^{\perp},\\
 & & & \hspace{3.2cm} \text{ $A$  is  onto the  tangent  space  of } \hM \},
\end{array}
$
\\
(in other words, $Q(M,\hat{M})$ is the set of \emph{partial isometries} of maximal rank),
and in defining rigorously  the distributions $\dns$, $\dr$ as well as the rolling curvature $\Rol$. We then provide basic properties of the rolling orbits associated to $\dns$ and $\dr$ respectively. In the case where $n\neq \hn$,
some non controllability results will be given, namely in the presence of totally geodesic submanifolds in $\hM$ as well as results in the case $\vert n-\hn\vert=1$ (Proposition \ref{p15} and Proposition \ref{p16} below). Finally, we will completely solve the issue of complete controllability associated to $(\Sigma)_{NS}$ in the spirit of what has been done in \cite{ChitourKokkonen},
and we will study the case where $(n,\hn)$=$(3,2)$ by using results of \cite{ChitourKokkonen1}.
Parts of the results of this paper have already appeared in the preprint \cite{ChitourKokkonen}.\\

\textbf{Acknowledgements.} The first author would like to thank the Lebanese National Council for Scientific Research (CNRS) and Lebanese University for their financial support to this work.

\section{Notations}\label{sec:notations}
Unless otherwise stated, all manifolds considered in this paper are finite dimensional, smooth and connected. If  in addition a manifold $M$ is endowed a Riemannian metric $g$, then $(M,g)$ is assumed to be complete and oriented and we use $\Vert v\Vert_g$ to denote $g(v,v)^{1/2}$ for every $v\in T_xM$, where $x$ is an arbitrary point of $M$.
Let $L: V \rarrow W$ be a $\mathbb{R}$-linear map where $V$ and $W$ are two $\mathbb{R}$-linear spaces with dimensions $n$ and $n'$ respectively. Taking $F = (v_{i})_{i=1}^{n}$ and $G = (w_{i})_{i=1}^{n'}$ two bases of $V$ and $W$ respectively, the $(n' \times n)$- real matrix of L w.r.t. $F$ and $G$ is denoted by $\mathcal{M}_{F,G} (L)$ and given by $L (v_{i}) = \sum_{j} \mathcal{M}_{F,G} (L)_{i}^{j} w_{j}$.
Furthermore, $\tsxmthxm$ is canonically identified with the linear space of the $\mathbb{R}$-linear map $A : T_x M \rarrow T_{\hx} \hM$.

If $E,M,F$ are manifolds, a smooth bundle $\pi_{E,M} : E \rarrow M$ is a smooth map such that  for every $x \in M$ there exists a neighbourhood $U$ of $x$ in $M$ and a smooth diffeomorphism $\tau : \pi_{E,M}^{-1} (U) \rarrow U \times F$ so that $pr_1 \circ \tau = (\pi_{E,M} |_{\pi(U)})^{-1}$, where $pr_1$ stands for the projection onto the first factor. Then, $F$ is called the typical fiber of $\pi_{E,M}$ and $\tau$ is a (smooth) local trivialization of $\pi_{E,M}$. In the case where $F$ is a finite dimensional $\mathbb{R}$-linear space, we get a (smooth) vector bundle.

The set $E |_{x} = \pi_{E,M}^{-1} (x) := \pi_{E,M}^{-1} ({x}) $ is called the $\pi_{E,M}$-fiber over $x$. A smooth section of a bundle $\pi_{E,M}$ is a smooth map $s: M \rarrow E$ that satisfies $\pi_{E,M} \circ s = \id_{M}$. When the context is clear, we simply write $\pi$ for $\pi_{E,M}$.

A distribution $\mathscr{D}$ over a manifold $M$ is a smooth assignment $x\mapsto \mathscr{D} |_x$ where $\mathscr{D} |_x\subset T_xM$. An absolutely continuous curve $c : I \rarrow M$ defined on an interval $I \subset \mathbb{R}$ is $\mathscr{D}$-admissible curve if it is tangent to $\mathscr{D}$ almost everywhere (a.e.), i.e., for almost every $t \in I$, $\dot{c} (t) \in \mathscr{D} |_{c(t)}$. For $x_0 \in M$, the subset $\Oh_{\mathscr{D}} (x_0)$ of $M$ formed by the endpoints of all $\mathscr{D}$-admissible curves of $M$ starting at $x_0$ is called the $\mathscr{D}$-orbit through $x_0$. By the Orbit Theorem (see \cite{AgrachevSachkov}), it follows that $\Oh_{\mathscr{D}} (x_0)$ is an immersed smooth submanifold of $M$ containing $x_0$,
and that one can restrict the class of curves defining the orbit to the piecewise smooth ones.
We call a distribution $\mathscr{D}'$ on $M$ a subdistribution of $\mathscr{D}$ if $\mathscr{D}' \subset \mathscr{D}$. An immediate consequence of the definition of the orbit shows that $\Oh_{\mathscr{D}'} (x_0) \subset \Oh_{\mathscr{D}} (x_0)$ for all $x_0 \in M$.

For a smooth map $\pi: E \rarrow M$ and $y \in E$, let $V |_{y} (\pi)$ be the set of all $Y \in T |_{y} E$ such that $\pi_{*} (Y) = 0$. If $\pi$ is a bundle then the collection of spaces $V |_{y} (\pi)$, $y \in E$, defines a smooth \emph{vertical} distribution $V (\pi)$ on $E$.

When $\pi: E \rarrow M$ and $\eta: F \rarrow M$ are vector bundles over a manifold $M$, let $C^{\infty} (\pi ,\eta)$ be the set of smooth maps $g : E \rarrow F$ such that $\eta \circ g = \pi$.
Given $f \in C^{\infty} (\pi ,\eta)$ and $u, w \in \pi^{-1} (x)$, the vertical derivative of $f$ at $u$ in the direction $w$ is defined as
\[
\nu(w) |_{u} (f) := \frac{d}{dt} |_0 f(u+tw) \in \nu^{-1}(x).
\]

A smooth map $f: M \rarrow \hM$ is a local isometry between two Riemannian manifolds $(M,g)$ and $(\hM , \hg)$ if, for all $x \in M$, $f_{*} |_{x}: T_{x} M \rarrow T_{f(x)} \hM$ is an isometric linear map.
If moreover $f$ is bijective, it is called an isometry, and $(M,g)$, $(\hM, \hg)$ are said to be isometric.

We use $Iso (M,g)$ to denote the smooth Lie group of isometries of $(M,g)$.\\
We use $(\overline{M}, \overline{g})$ to denote  $(M,g) \times (\hM, \hg) $,  the Riemannian product manifold of $M$ and $\hM$, endowed with the product metric $\overline{g} := g \oplus \hg$.
Similarly, $\nabla$, $\hnabla$, $\nablabar$ (resp. $R$, $\hR$, $\overline{R}$) represent the Levi-Civita connections (resp. the Riemannian curvature tensors) of $(M,g)$, $(\hM, \hg)$, $(\overline{M}, \overline{g})$, respectively.

If $\gamma: I \rarrow M$ is an absolutely continuous curve defined on real interval $I \ni 0$ and  $T_0$ is any tensor at $\gamma(0)$ , we use $(P^{\nabla^{g}})_0^{t} (\gamma) T_0$ to denote the parallel transport of $T_0$ along $\gamma$ from $\gamma(0)$ to $\gamma(t)$ w.r.t. the Levi-Civita connection of $(M,g)$.\\
Furthermore, if $s,t\in I$ and $F \in Iso(M,g)$, then one has (see \cite{Sakai}, page 41, Eq. (3.5)) that
\begin{equation}\label{DS}
F_{*} |_{\gamma (t)} \circ (P^{\nabla^{g}})_{s}^{t} (\gamma) = (P^{\nabla^{g}})_{s}^{t} (F \circ \gamma) \circ F_{*} |_{\gamma (s)}.
\end{equation}
For every point $x$ of  a Riemannian manifold $M$ and $k \in \mathbb{N}$, let $\mathcal{L}_{k} (M) |_{x}$ be the space of linear maps $\mathbb{R}^{k} \rarrow T_{x} M$ and set $\mathcal{L}_{k} (M) := \bigcup_{x \in M} \mathcal{L}_{k} (M) |_{x}$.
\begin{definition}\label{d5}
We define the subset $O_{k} (M)$ of $\mathcal{L}_{k} (M)$ of all elements $B \in \mathcal{L}_{k} (M) |_{x}$, $x \in M$, such that
\begin{enumerate}
\item[(i)]
if $1 \leq k \leq \dim M$, $\Vert B u \Vert_{g} = \Vert u \Vert_{\mathbb{R}^{k}}$ for all $u \in \mathbb{R}^{k}$;
\item[(ii)]
if $k \geq \dim M$, then $B$ is surjective and $\Vert B u \Vert_{g} = \Vert u \Vert_{\mathbb{R}^{k}}$ for all $u \in (\ker B)^{\perp}$ (where $\perp$ is taken with respect to the Euclidean inner product in $\mathbb{R}^{k}$).
\end{enumerate}
In other terms, $O_{k} (M)$ is composed of the partial isometries $\mathbb{R}^{k} \rarrow T_{x} M$ of maximal rank.
\end{definition}

Consider the map $\pi_{\mathcal{L}_{k} (M) } : \mathcal{L}_{k} (M) \rarrow M$ defined by $B \mapsto x$ for $B \in \mathcal{L}_{k} (M) |_{x}$ and $\pi_{O_{k} (M) } := \pi_{\mathcal{L}_{k} (M) } |_{O_{k} (M) }: O_{k} (M) \rarrow M$. We have the following standard results.

\begin{proposition}\label{p22}
For every $k\in \mathbb{N}$, $\pi_{\mathcal{L}_{k} (M) }$ is a vector bundle over $M$, isomorphic to $\bigoplus_{i=1}^{k} T M \rarrow M$; $\pi_{O_{k} (M) }$ defines a subbundle of $\pi_{\mathcal{L}_{k} (M) }$ whose typical fiber is $O_{k} (\mathbb{R}^{n})$, where $\mathbb{R}^{n}$ is equipped with the Euclidean metric.
Moreover,
$O_{k} (M) $ is connected for any $k \neq n$ and $O_{n} (M)$ is connected whenever $M$ is not orientable.
\end{proposition}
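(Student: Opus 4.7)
I would proceed in three stages: first establish the vector bundle structure of $\pi_{\mathcal{L}_k(M)}$ and the subbundle structure of $\pi_{O_k(M)}$, then identify the typical fiber with a standard Stiefel manifold, and finally derive the connectedness assertions.

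For the first two claims, I define $\Phi \colon \mathcal{L}_k(M) \to \bigoplus_{i=1}^k TM$ by $B \mapsto (Be_1,\dots,Be_k)$, where $(e_i)$ is the standard basis of $\mathbb{R}^k$; this is fiberwise a linear isomorphism, and pulling back the smooth vector bundle structure of $\bigoplus^k TM$ along $\Phi$ gives the desired structure on $\mathcal{L}_k(M)$ together with the asserted isomorphism. To exhibit $\pi_{O_k(M)}$ as a subbundle, I choose locally a smooth orthonormal frame $(E_j)_{j=1}^n$ on some open $U \subset M$ and associate to each $B \in \pi_{\mathcal{L}_k(M)}^{-1}(U)$ the $n \times k$ matrix of its coefficients in this frame. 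Because the frame is orthonormal, conditions (i)--(ii) of Definition \ref{d5} are equivalent to the corresponding matrix conditions defining $O_k(\mathbb{R}^n)$, which yields a smooth local trivialization $\pi_{O_k(M)}^{-1}(U) \cong U \times O_k(\mathbb{R}^n)$.

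Next, I identify the fiber $O_k(\mathbb{R}^n)$ with a classical Stiefel manifold. When $k \leq n$, the columns of the matrix of $B$ form an orthonormal $k$-tuple in $\mathbb{R}^n$, so $O_k(\mathbb{R}^n) = V_k(\mathbb{R}^n)$. When $k \geq n$, a short linear-algebra check shows that the conditions on $B$ are equivalent to $BB^T = I_n$ (viewing $B$ as an $n \times k$ matrix), so the rows of $B$ form an orthonormal $n$-tuple in $\mathbb{R}^k$ and $O_k(\mathbb{R}^n) \cong V_n(\mathbb{R}^k)$. Since $V_m(\mathbb{R}^N)$ is connected for $m < N$ and $V_N(\mathbb{R}^N) = O(N)$ has exactly two components, the typical fiber is connected when $k \neq n$ and has two components when $k = n$.

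Because $M$ itself is connected, any fiber bundle over $M$ with connected fiber is connected, which settles the case $k \neq n$. For $k = n$, the two components of $O(n)$ correspond, in local orthonormal trivializations, to positively and negatively oriented orthonormal frames. If $M$ is orientable these two collections give globally separated subsets of $O_n(M)$, so $O_n(M)$ is disconnected. If $M$ is non-orientable, there exists a loop $\gamma \colon [0,1] \to M$ at some $x \in M$ along which parallel transport is orientation-reversing; the curve $t \mapsto (P^{\nabla^{g}})_{0}^{t}(\gamma) \circ B_0$ (for any $B_0 \in O_n(M)|_{x}$) stays in $O_n(M)$ because parallel transport preserves orthonormality, and joins points in the two fiberwise components, so combining this with the connectedness of each component of $O(n)$ yields connectedness of $O_n(M)$. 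The main obstacle is this last step, which cannot be deduced from the bundle structure alone and requires invoking the topological characterization of non-orientability.
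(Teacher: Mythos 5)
The paper states this proposition without proof, presenting it as a ``standard result,'' so there is no argument in the text to compare against; I evaluate your proof on its own merits. Your argument is correct. The identification $\mathcal{L}_k(M)\cong\bigoplus^k TM$ via $B\mapsto(Be_1,\dots,Be_k)$ is the natural one, and local orthonormal frames trivialize $O_k(M)$ with typical fiber $O_k(\mathbb{R}^n)$. Your linear-algebra reduction in the case $k\ge n$ is right: the conditions defining $O_k(\mathbb{R}^n)$ are equivalent to $BB^T=I_n$ (one can see it by noting that $\|Bu\|=\|u\|$ on $(\ker B)^\perp=\operatorname{im}B^T$ forces $(BB^T)^2=BB^T$, and surjectivity of $B$ then forces $BB^T=I_n$), so $B\mapsto B^T$ identifies $O_k(\mathbb{R}^n)$ with $V_n(\mathbb{R}^k)$; together with $O_k(\mathbb{R}^n)=V_k(\mathbb{R}^n)$ for $k\le n$, this gives a connected fiber exactly when $k\ne n$, and connectedness of the total space over connected $M$ follows. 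For $k=n$ your strategy is the right one: when $M$ is non-orientable there is a loop whose parallel transport is orientation-reversing, and composing $B_0$ with $P_0^t(\gamma)$ produces a path in $O_n(M)$ linking the two orientation components of the fiber; since fibers of a fixed orientation are already connected over connected $M$, the total space is connected. The only point you should make explicit, which you flag as the ``main obstacle,'' is the equivalence between non-orientability of $M$ and existence of an orientation-reversing parallel-transport loop; this holds because the lift of a loop to the orientation double cover is exactly the continuous choice of orientation carried by parallel transport, so $\gamma\mapsto\operatorname{sign}\det P_\gamma$ is precisely the monodromy of that cover. With this observation recorded, the proof is complete.
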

If $n,\hn$ are two positive integers, let $(\mathbb{R}^{n})^{*} \otimes \mathbb{R}^{\hat{n}}$ is the set of $\hn \times n$ real matrices. We denote
\begin{eqnarray}\label{B}
SO(n,\hn) :=
\left\lbrace
\begin{array}{lr}
\{ A \in (\mathbb{R}^{n})^{*} \otimes \mathbb{R}^{\hn} \mid A^{T} A = \id_{\mathbb{R}^{n}} \}, & if \; n < \hn, \\
\{ A \in (\mathbb{R}^{n})^{*} \otimes \mathbb{R}^{\hn} \mid A A^{T} = \id_{\mathbb{R}^{\hn}} \}, & if \; n > \hn, \\
SO(n), & if \; n = \hn,
\end{array}
\right.
\end{eqnarray}
where $A^{T}$ is used to denote the usual transpose with respect to the inner product of the appropriate vector space.
We give the following matrix form $I_{n,\hn} \in SO(n,\hn) $,
\begin{eqnarray}\label{DR}
I_{n,\hn} =
\left\lbrace
\begin{array}{cc}
\left(
\begin{array}{c}
\id_{\mathbb{R}^{n}}\\
0
\end{array}
\right), & \quad if \; n \leq \hn,\\
\\
\left(

\id_{\mathbb{R}^{\hn}} \quad 0

\right), & \quad if \; n \geq \hn.
\end{array}
\right.
\end{eqnarray}
If  $(M,g)$ is an $n$-dimensional Riemannian manifold and $x \in M$, the identification of the tangent space $T_{x} M$ with the Euclidean space $\mathbb{R}^{n}$ allows one to write $SO(T_{x} M) = SO(n)$ and $\mathfrak{so} (T_{x} M) = \mathfrak{so} (n)$. We also denote $\mathfrak{so} (M) := \bigcup_{x \in M} \mathfrak{so} (T_{x} M)$ as the set $\{B \in T^* M\otimes TM \mid B^T + B =0 \}$.

Let $N$ be a manifold. A loop $\gamma:[a,b] \rarrow N$  based at $y \in N$  is a curve verifying $\gamma (a) = \gamma (b) = y$ and let $\Omega_{y} (N)$ be the set of all piecewise $C^1$-loops $[0,1] \rarrow N$ of $N$ based at $y$. On the other hand, if $(N,h)$ is a Riemannian manifold, then the holonomy group $H^{\nabla^{h}} |_{y}$ of $N$ at $y$ is defined by
$$
H^{\nabla^{h}} |_{y} = \{ (P^{\nabla})_0^1 (\gamma) \mid \gamma \in \Omega_{y} (N)\},
$$
and it is a subgroup of $O (T_{y} N)$ made of all $h$-orthogonal transformations of $T_{y} N$. If $N$ is oriented, then $H^{\nabla^{h}} |_{y}$ is a subgroup of $SO(T_{y} N)$. If $n = \dim N$ and $F$ is an orthonormal frame of $N$ at $y$, we write
$$
H^{\nabla^{h}} |_{F} = \{ \mathcal{M}_{F,F} (A) \mid A \in H^{\nabla^{h}} |_{y} \}.
$$
This is a subgroup of $SO(n)$, isomorphic (as Lie group) to $ H^{\nabla^{h}} |_{y}$. The Lie algebra of the holonomy group $ H^{\nabla^{h}} |_{y}$ (resp. $H^{\nabla^{h}} |_{F}$) will be denoted by $\mathfrak{h}^{\nabla^{h}} |_{y}$ (resp. $\mathfrak{h}^{\nabla^{h}} |_{F}$). Then $\mathfrak{h}^{\nabla^{h}} |_{y}$ is a Lie subalgebra of the Lie algebra $\mathfrak{so} (T_{y} N)$ of $h$-antisymmetric linear maps $ T_{y} N \rarrow T_{y} N$ while $\mathfrak{h}^{\nabla^{h}} |_{F}$ is a Lie subalgebra of $\mathfrak{so} (n)$.

\section{Rolling Motions}

\subsection{The State Space $Q$}

\begin{definition}
Let $(M,g)$ and $(\hM,\hg)$ be two Riemannian manifolds of dimensions $n$ and $\hn$ respectively. The state space $Q=\qmhm$ for the problem of rolling of $M$ against $\hM$ considered below is defined as follows:
\begin{enumerate}
\item[(i)]
if $n \leq \hn$,
$$
\qmhm:=\{A \in \tsmthm \mid \hg(AX,AY) = g (X,Y), \; X,Y \in T_{x} M, x \in M \}.
$$
\item[(ii)]
if $n \geq \hn$,
$$
\begin{array}{lc}
\qmhm:= & \{A \in \tsmthm \mid \hg(AX,AY) = g (X,Y), \; X,Y \in (\ker A)^{\perp}, \\
& \text{ A  is  onto  a  tangent  space  of } \hM \}.
\end{array}
$$
\end{enumerate}
\end{definition}
Writing $A^{T}: T_{\hx} \hM \rarrow T_{x} M$ the $(g, \hg)$-transpose of $A$, we have that $(\ker A)^{\perp} = \IM(A^{T})$,
and evidently $A^{T} A = \id_{T_{x} M}$ if $n \leq \hn$ and $A A^{T} = \id_{T_{\hx} \hM}$ if $n \geq \hn$. Also, define
\begin{eqnarray}
\begin{array}{c}
\pi_{\qmhm, M} : = \pi_{\tsmthm, M} |_{\qmhm} : \qmhm \rarrow M,\\
\pi_{\qmhm, \hM} : = \pi_{ \tsmthm, \hM} |_{\qmhm} : \qmhm \rarrow \hM.
\end{array}
\end{eqnarray}
If $q\in \qmhm$, we use the notation $q=(x,\hx ;A)$ where $x=\pi_{\qmhm,M}(q)$ and
$\hx=\pi_{\qmhm,\hM}(q)$.

\begin{proposition}\label{p10}
\begin{enumerate}
\item[$(i)$]
The space $\qmhm$ is a smooth closed submanifold of $\tsmthm$ of dimension:
$$
dim (Q) = n + \hat{n} + n \hn - \frac{N (N +1)}{2}, \; where \; N := min \{n,\hn \},
$$
and $\pi_{\qmhm, M}$ is a smooth subbundle of $\pi_{\tsmthm, M}$ with typical fiber $O_{n} (\hM)$.
\item[$(ii)$] The map
$$
\tau_{M,\hM} : \tsmthm \rarrow T^{*} \hM \otimes T M; \; \; \; (x,\hx; A) \mapsto (\hx,x ; A^{T}),
$$
is a diffeomorphism and its restriction to $\qmhm$gives the diffeomorphism

\begin{equation}\label{DT}
\tbar: \qmhm  \rarrow  Q(\hM,M)=:\hat{Q}; \;\;\; \tbar(x,\hx ;A) = \tau_{M,\hM} |_{Q} (x,\hx ;A) = (\hx,x ;A^{T}).
\end{equation}

\item[$(iii)$]
If $n \neq \hn$ or if one of $M$ and $\hM$ is not orientable, then the space $\qmhm$ is connected.
\end{enumerate}
\end{proposition}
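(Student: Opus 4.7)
The plan is to reduce each item to the corresponding model statement about $O_n(\hM)$ supplied by Proposition \ref{p22}, via local orthonormal framings.

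For part $(i)$, I would trivialize locally. Choose $U\subset M$ and $\hat U\subset\hM$ on which $TM$ and $T\hM$ admit orthonormal frames $F=(e_i)_{i=1}^n$ and $\hat F=(\hat e_j)_{j=1}^{\hn}$. Under the resulting trivialization $A\mapsto(x,\hx,\mathcal{M}_{F,\hat F}(A))$ of $(\tsmthm)|_{U\times\hat U}$ as $U\times\hat U\times\mathbb{R}^{\hn\times n}$, the pointwise defining conditions of $\qmhm$ become precisely the matrix conditions of Definition \ref{d5} with $k=n$ and model space $\mathbb{R}^{\hn}$: $A^TA=\id$ when $n\leq\hn$, and $AA^T=\id$ together with surjectivity when $n\geq\hn$. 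Thus the fiber is cut out as $O_n(\mathbb{R}^{\hn})$, which is a smooth closed submanifold of $\mathbb{R}^{\hn\times n}$ of dimension $n\hn-N(N+1)/2$ (it is the Stiefel manifold $V_n(\mathbb{R}^{\hn})$ for $n\leq\hn$, diffeomorphic via transposition to $V_{\hn}(\mathbb{R}^n)$ for $n\geq\hn$); constancy of rank of the defining matrix equations yields smoothness and closedness. Adding the $n+\hn$ dimensions of the base gives the claimed total dimension of $Q$, and composing the above with $A\mapsto A\circ F$ identifies $\pqm^{-1}(U)$ with $U\times O_n(\hM)$, showing $\pqm$ is a smooth subbundle of $\pi_{\tsmthm,M}$ with typical fiber $O_n(\hM)$.

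For part $(ii)$, $\tau_{M,\hM}$ is smooth and fiberwise $\mathbb{R}$-linear with inverse $\tau_{\hM,M}$, hence a diffeomorphism on the ambient bundles. One checks directly that transposition interchanges the two defining conditions of $Q$: if $A$ satisfies $A^TA=\id_{T_xM}$ (the $n\leq\hn$ case), then $A^T:T_{\hx}\hM\to T_xM$ is surjective (since $\ker A=\{0\}$ forces $\IM A^T=T_xM$) and satisfies $(A^T)(A^T)^T=A^TA=\id$, which is exactly the $\hn\geq n$ condition for $A^T\in Q(\hM,M)$; the opposite direction is symmetric. Hence $\tau_{M,\hM}$ restricts to a diffeomorphism $\overline T$ from $\qmhm$ onto $Q(\hM,M)$.

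For part $(iii)$, $\pqm$ is, by $(i)$, a fiber bundle over the connected base $M$ with typical fiber $O_n(\hM)$, so connectedness of $Q$ follows from that of $O_n(\hM)$. If $n\neq\hn$, Proposition \ref{p22} applied with $k=n$ to $\hM$ (of dimension $\hn\neq n$) gives this at once. If $n=\hn$ and $\hM$ is not orientable, the second clause of Proposition \ref{p22} applies directly. If instead $M$ is not orientable and $n=\hn$, we transfer via $(ii)$: $\qmhm\cong Q(\hM,M)$ through $\overline T$, whose typical fiber is $O_{\hn}(M)=O_n(M)$, connected by Proposition \ref{p22} as $M$ is non-orientable. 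These subcases exhaust the hypothesis.

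The main technical ingredient, and essentially the only non-bookkeeping step, is the local-trivialization argument in $(i)$: once the pointwise defining conditions are recognized as the matrix conditions of Definition \ref{d5}, everything else — the subbundle structure, the dimension formula, the transpose identification, and the connectedness reduction — is handled mechanically by Proposition \ref{p22} and part $(ii)$.
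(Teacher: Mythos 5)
Your proposal is correct and follows essentially the same route as the paper: identify the fibers of $\pqm$ with the model space $O_n(\hM)$ via local orthonormal framings (the local trivialization you describe is precisely the one used in Corollary~\ref{cor-not}), observe that $\tau_{\hM,M}$ inverts $\tau_{M,\hM}$ and that transposition swaps the two defining conditions of $Q$, and deduce connectedness from Proposition~\ref{p22} together with the transpose identification for the non-orientable-$M$ case. In fact your treatment of $(i)$ is more precise than the paper's one-line proof, which loosely calls the fiber a ``Grassmannian'': it is the Stiefel manifold $O_n(\mathbb{R}^{\hn})$, as you correctly note, which is the only identification consistent with the stated dimension $n\hn - N(N+1)/2$ and with Corollary~\ref{cor-not}.
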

\begin{proof}
($i$) It is clearly enough to prove the result only for $n\leq \hn$. In that case, the (vertical) fiber of $Q$ is isomorphic to the grasmannian of $n$-dimensional planes in an $\hn$-dimensional euclidean space, hence the result.
\end{proof}

\begin{enumerate}
\item[$(ii)$]
First at all, we see that $\tau_{\hM,M}$ is the inverse map of $\tau_{M,\hM}$, thus $\tau_{M,\hM}$ is a diffeomorphism. Moreover, one has $(\hx,x ;A^{T}) \in \hat{Q}$ for every $(x,\hx ;A) \in Q$. Indeed, according to (i), one may assume that $n \leq \hn$. Let $\hX,\hY \in (\ker A^{T})^{\perp}$. Since $(\ker A^{T})^{\perp} = \IM (A)$, there are $X,Y \in T_{x} M$ such that $AX = \hX , AY=\hY$,
and because $A^{T} A = \id_{T_{x}M}$, we get that $g(A^{T} \hX , A^{T} \hY) = g(X,Y) = \hg (AX,AY)= \hg(\hX,\hY)$. Now, take the map
$$
\overline{S}: \hat{Q} \rarrow Q; \; \; \; \overline{S}(\hx,x ;B) = (x,\hx;B^{T}),
$$
which is well-defined because $(\ker B)^{\perp} = \IM (B^{T})$ and $B B^{T}= \id_{T_{x}M}$. Thus, for all $X,Y \in T_{x} M$, one obtains $\hg (B^{T} X,B^{T} Y) = g (B B^{T} X, B B^{T} Y) = g (X,Y)$. Therefore, $\tbar$ and $\overline{S}$ are smooth inverse maps to each other.
\item[$(iii)$] This follows from Proposition \ref{p22}.

\hfill$\Box$\medskip
\end{enumerate}
\begin{corollary}\label{cor-not}
The map $\pi_{\qmhm}: \qmhm \rarrow M \times \hM$ is a bundle whose typical fiber is diffeomorphic to $O_{n} (\mathbb{R}^{\hn})$.
\end{corollary}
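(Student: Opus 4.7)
The plan is to build explicit local trivializations of $\pq$ over product neighborhoods $U \times \hat{U} \subset M \times \hM$ by reading off the matrix of each partial isometry $A$ in a chosen pair of local orthonormal frames. With this approach the corollary essentially reduces to unpacking definitions, the orthonormality of the frames ensuring that the partial-isometry condition on $A$ translates verbatim into the condition defining $O_n(\R^{\hn})$.

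More precisely, fix $(x_0,\hx_0) \in M \times \hM$, and by shrinking coordinate charts and applying Gram--Schmidt, obtain an open neighborhood $U$ of $x_0$ in $M$ carrying a smooth orthonormal frame $F = (X_i)_{i=1}^n$ of $TM|_U$, together with an open neighborhood $\hat{U}$ of $\hx_0$ in $\hM$ carrying a smooth orthonormal frame $G = (\hY_j)_{j=1}^{\hn}$ of $T\hM|_{\hat{U}}$. Then define
\[
\tau : \pq^{-1}(U \times \hat{U}) \to (U \times \hat{U}) \times O_{n}(\R^{\hn}),\quad
(x,\hx;A) \longmapsto \bigl(x,\hx,\mathcal{M}_{F|_x,\,G|_{\hx}}(A)\bigr).
\]

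The crux is to verify that $\mathcal{M}_{F|_x,G|_{\hx}}(A)$ actually lies in $O_n(\R^{\hn})$. The orthonormality of the frames says that $F|_x : \R^n \to T_x M$ and $G|_{\hx} : \R^{\hn} \to T_{\hx}\hM$ are linear isometries onto their images, so $\mathcal{M}_{F|_x,G|_{\hx}}(A)$, viewed as a linear map $\R^n \to \R^{\hn}$, has the same rank as $A$ and the same isometric behaviour on the orthogonal complement of its kernel. The case split $n \leq \hn$ versus $n \geq \hn$ in the definition of $\qmhm$ then matches clause by clause the corresponding case split in Definition~\ref{d5} applied with $\R^{\hn}$ endowed with its Euclidean metric. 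This matching is the one mild obstacle: in the case $n \geq \hn$ one must remember to carry across both the surjectivity of $A$ onto $T_{\hx}\hM$ and the restriction of the isometry condition to $(\ker A)^{\perp}$ under the frame isomorphisms.

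Once this matching is checked, $\tau$ is tautologically bijective, with inverse sending $(x,\hx,B)$ to the partial isometry $T_x M \to T_{\hx}\hM$ whose matrix in the bases $F|_x, G|_{\hx}$ is $B$; smoothness of $\tau$ and its inverse is immediate from smoothness of the frames, and $pr_1 \circ \tau = \pq$ on the domain by construction. Thus $\tau$ is a local trivialization of $\pq$ with typical fiber $O_n(\R^{\hn})$, and since $(x_0,\hx_0)$ was arbitrary, $\pq$ is a bundle with the stated typical fiber. Equivalently, one may derive the same conclusion by composing the local trivialization of $\pqm$ with typical fiber $O_n(\hM)$ from Proposition~\ref{p10}(i) with the local trivialization of $\pi_{O_n(\hM)} : O_n(\hM) \to \hM$ with typical fiber $O_n(\R^{\hn})$ from Proposition~\ref{p22}.
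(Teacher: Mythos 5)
Your proof is correct and takes essentially the same approach as the paper: the paper also fixes local $g$- and $\hg$-orthonormal frames $F$, $\hF$ near $x_0$, $\hx_0$, sends $(x,\hx;A)$ to $((x,\hx),G_{F,\hF}(A))$ where $G_{F,\hF}(A)$ is the matrix of $A$ in these frames, checks that the image is $(U\times\hU)\times O_n(\R^{\hn})$, and writes down the explicit smooth inverse. The only cosmetic difference is that the paper writes out the matrix entries $\hg(\hX_i,AX_j)$ and the inverse map explicitly, whereas you cite the paper's matrix notation $\mathcal{M}_{F,G}$ and rely on Proposition~\ref{p10} for the case analysis; you also note the alternative route via composing the trivializations of Proposition~\ref{p10}(i) and Proposition~\ref{p22}, which the paper does not mention.
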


\proof
For a given point $(x_0, \hx_0) \in M \times \hM$, take any $g$-orthonormal (resp. $\hg$-orthonormal) frame $F = (X_1, ..., X_{n})$ (resp. $\hF = (\hX_1, ..., \hX_{\hn})$) defined on some open neighbourhood $U$ of $x_0$ (resp. $\hU$ of $\hx_0$). Fix a $\q \in (\pi_{\qmhm})^{-1} (U \times \hU)$, define $G_{F, \hF} (A)$ to be the $\hn \times n$-matrix whose element on the $i$-th row, $j$-th column is $\hg (\hX_{i} |_{\hx} , A X_{j} |_{x})$ and set
\begin{align*}
\tau_{F,\hF} : (\pi_{\qmhm})^{-1} (U \times \hU) \rarrow (U \times \hU) \times (\mathbb{R}^{n})^{*} \otimes \mathbb{R}^{\hn}; \;\;\; \tau_{F,\hF} (x,\hx;A) = ((x,\hx), G_{F, \hF} (A)).
\end{align*}
Using Proposition \ref{p10}, it is easy to see that $\tau_{F,\hF}$ is smooth, injective and its image is $(U \times \hU) \times O_{n} (\mathbb{R}^{\hn})$. Moreover, its inverse map $\tau_{F,\hF}^{-1}: (U \times \hU) \times O_{n} (\mathbb{R}^{\hn}) \rarrow (\pi_{\qmhm})^{-1} (U \times \hU)$ is given by
\begin{align*}
\tau_{F,\hF}^{-1} ((x,\hx),B) = (x,\hx; \sum_{j=1}^{n} \sum_{i=1}^{\hn} B_{ij} g (\cdot, X_{j} |_{x}) \hX_{i}),
\end{align*}
where $B_{ij}$ is the element on $i$-th row, $j$-th column of $B$.
The fact that $\tau_{F,\hF}$ and $\tau_{F,\hF}^{-1}$ are smooth is easily established.
\hfill$\Box$\medskip

\begin{proposition}\label{p9}
Let $q=(x,\hx;A) \in Q$ and $B \in \tsxmthxm$. Then $\nu (B)|_{q}$ is tangent to $Q$ $($i.e. is an element of $V |_{q} (\pq))$ if and only if
\begin{enumerate}
\item[(i)]
$A^{T} B \in \mathfrak{so} (T_{x} M)$, if $n \leq \hn$.
\item[(ii)]
$BA^{T} \in \mathfrak{so} (T_{\hx} \hM)$, if $n \geq \hn$.
\end{enumerate}
\end{proposition}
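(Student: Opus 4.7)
The plan is to identify $V|_q(\pq)$ with the tangent space at $A$ to the fiber of $\pq$ through $q$, viewed as a submanifold of the ambient vector space $\tsxmthxm$. The defining conditions of $Q$ reduce on this fiber to the single matrix identity $A^T A = \id_{T_x M}$ (if $n \leq \hn$) or $A A^T = \id_{T_{\hx} \hM}$ (if $n \geq \hn$), together with, in case (ii), the open condition that $A$ be surjective onto $T_{\hx}\hM$. Linearizing the matrix identity should yield the claimed characterization, and a dimension count based on Proposition \ref{p10}(i) will show that this linearization is sharp.

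For the forward implication, I would pick a smooth curve $t \mapsto A(t)$ in $Q$ with $A(0) = A$, $\dot A(0) = B$, lying entirely in the fiber $\pq^{-1}(x, \hx)$, and differentiate the relevant identity at $t=0$. In case (i), this yields $B^T A + A^T B = 0$; since the $(g,\hg)$-transpose satisfies $(A^T B)^T = B^T A$, this is precisely $A^T B \in \mathfrak{so}(T_x M)$. Case (ii) is symmetric: differentiating $A(t) A(t)^T = \id_{T_{\hx}\hM}$ gives $B A^T + A B^T = 0$, i.e.\ $B A^T \in \mathfrak{so}(T_{\hx}\hM)$.

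For the reverse implication, I would argue by a dimension count rather than construct an explicit curve. Let $W$ denote the linear subspace of $\tsxmthxm$ cut out by the condition in (i) (resp.\ (ii)). The forward implication gives $V|_q(\pq) \subseteq W$, so it suffices to check that $\dim W$ equals the fiber dimension $n\hn - \tfrac{N(N+1)}{2}$ supplied by Proposition \ref{p10}(i). Using that $A^T$ is surjective in case (i) (from $A^T A = \id$) and $A$ is surjective in case (ii), the linear maps $B \mapsto A^T B$ and $B \mapsto B A^T$ are surjective onto $T^*_x M \otimes T_x M$ and $T^*_{\hx}\hM \otimes T_{\hx}\hM$ respectively, and a short kernel-image bookkeeping then yields the required dimension of $W$.

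The one subtlety worth flagging, and the only real obstacle, is in case (ii): one must check that the open rank-maximality condition $\IM(A) = T_{\hx}\hM$ contributes no additional linear constraint on $B$, which is automatic since open conditions do not affect tangent spaces. Apart from that, the argument is essentially the standard derivation of $\mathfrak{so}$ as the Lie algebra of the orthogonal group, transported through the identifications provided by $A$ and $A^T$.
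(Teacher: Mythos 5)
Your proof is correct and follows essentially the same approach as the paper's: differentiate the constraint $A^T A = \mathrm{id}$ (resp.\ $A A^T = \mathrm{id}$) in the vertical direction to obtain the inclusion of $V|_q(\pq)$ into the stated linear space, then close the argument with a dimension count against the fiber dimension from Proposition~\ref{p10}(i). The only cosmetic difference is the order of presentation: the paper records the dimension equality first and then verifies the inclusion, whereas you establish the inclusion first and then supply the dimension count.
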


\begin{proof}

Note that the set of $B \in \tsxmthxm$ such that $A^{T} B \in \mathfrak{so} (T_{x} M)$ and the set of $B \in \tsxmthxm$ such that $B A^{T}  \in \mathfrak{so} (T_{\hx} \hM)$ both have dimension equal to dim $\pq^{-1} (x,\hx)$. Therefore, it is sufficient to show that $V |_{q} (\pq) \subseteq \mathfrak{so} (T_{x} M)$ when $n \leq \hn$ and $V |_{q} (\pq) \subseteq \mathfrak{so} (T_{\hx} \hM)$ when $n \geq \hn$. We only prove Item $(i)$ since the other follows by using Eq. \eqref{DT}.
If $n \leq \hn$ and $X \in T_{x} M$, then $A^{T} A X = X$. For any $B \in \tsxmthxm$ tangent to $Q$, we have $\nu(B) |_{q} X =0$. Then, $0 = \nu(B) |_{q} (.)^{T} (.) X = B^{T} A X + A^{T} B X$ and hence $B^{T} A  + A^{T} B =0$ because $X$ was arbitrary.
Same analysis as (i): if $n \geq \hn$ and $\hX \in T_{\hx} \hM$, then we have $ A A^{T} \hX = \hX$. For any $B \in \tsxmthxm$ tangent to $Q$, we have $\nu(B) |_{q} \hX =0$. Then, $0 = \nu(B) |_{q} (.) (.)^{T} \hX = B A^{T} \hX + A B^{T} \hX$ and hence the conclusion.
\end{proof}

\subsection{The Rolling Lifts and Distributions}

Since we are interested in the rolling motion without spinning nor slipping, we formulate these conditions by taking an absolutely continuous curve on $Q$, $q: [a,b] \rarrow Q$; $t \mapsto (\gamma(t), \hgamma(t); A(t))$ and making the following definitions.

\begin{definition}
The curve $q (\cdot) $ is said to describe:
\begin{description}
\item[$(i)$] A rolling motion without spinning of $M$ against $\hM$ if:
\begin{equation}\label{CM}
\nablabar_{(\dot{\gamma} (t), \dot{\hgamma} (t))} A (t) =0 \text{ for a.e. } t \in [a,b].
\end{equation}
\item[$(ii)$]  A rolling motion without slipping of $M$ against $\hM$ if we have:
\begin{equation}\label{CN}
A(t) \dot{\gamma} (t) = \dot{\hgamma} (t) \text{ for a.e. } t \in [a,b].
\end{equation}
\item[$(iii)$]  A rolling motion without slipping nor spinning of $M$ against $\hM$ if both conditions $(i)$ and $(ii)$ hold true.
\end{description}
By Item $(iii)$ above, we get that the curves $q$ of $Q$ describing the rolling motion without slipping and spinning of $M$ against $\hM$  are exactly the integral curves of  the following driftless control affine system
\begin{eqnarray} \label{A}
\Sigma_{R}:\quad 
\left\lbrace
\begin{array}{lll}
\dot{\gamma} (t) & = & u(t), \\
\dot{\hgamma} (t) & = & A(t) u(t), \; \text{ \; \; \; for a.e. } t \in [a,b],\\
\nablabar_{( u (t), A (t) u(t))} A (t)& = &  0,\\
\end{array}
\right.
\end{eqnarray}
where the control $u$ is a measurable $TM$-valued function defined on some finite interval $I\subset \mathbb{R}$. (In Appendix \ref{app0}, we provide an expression in (local) coordinates of 
$(\Sigma)_{R}$ as well as the control system describing the rolling motion without spinning only of $M$ against $\hM$.) 
\end{definition}

\begin{proposition}\label{p1}
Let $A_0$ be a (1,1)-tensor on $M\times \hM$ (i.e. $\in {T_{1}^{1}}_{\; (x_0, \hx_0)} (M \times \hM) $ for $(x_0, \hx_0) \in M \times \hM$) and
$t \mapsto q(t)=(\gamma(t), \hgamma (t); A(t))$ be an absolutely continuous curve in $T^*M\otimes T\hat{M}$
defined on some real interval $I \ni 0$ and satisfying \eqref{CM}.
Then we have, for all $t\in I$,
\[
& A(t) = P_0^{t} (\hgamma) \circ A(0) \circ P_{t}^0 (\gamma),\\
& A(0) \in Q \ \Longrightarrow \ A(t)\in Q.
\]
\end{proposition}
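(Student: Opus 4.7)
The plan is to interpret equation \eqref{CM} as saying that $A(t)$ is the $\nablabar$-parallel transport of $A(0)$ along the curve $t\mapsto (\gamma(t),\hgamma(t))$ in $\overline{M}=M\times \hM$. Since $(\overline{M},\overline{g})=(M\times \hM,\, g\oplus \hg)$ is a Riemannian product, its Levi-Civita connection $\nablabar$ splits along the factors, and the induced connection on the bundle $T^*M\otimes T\hM\to M\times \hM$ is built from $\nabla$ and $\hnabla$ in the obvious way. Consequently, parallel transport in $T^*M\otimes T\hM$ along $(\gamma,\hgamma)$ factors as the tensor product of parallel transport along $\gamma$ in $(M,g)$ and along $\hgamma$ in $(\hM,\hg)$.

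To turn this into a proof of the first identity, I would fix an arbitrary $X_0\in T_{\gamma(0)}M$, set $X(t):=P_0^t(\gamma)X_0$ (the $\nabla$-parallel transport of $X_0$ along $\gamma$), and define $Y(t):=A(t)X(t)\in T_{\hgamma(t)}\hM$. The key computation is the Leibniz rule for the pairing of a section of $T^*M\otimes T\hM$ with a $TM$-valued curve, which yields
\begin{equation*}
\hnabla_{\dot{\hgamma}(t)} Y(t)\;=\;\bigl(\nablabar_{(\dot{\gamma}(t),\dot{\hgamma}(t))} A(t)\bigr)X(t)\;+\;A(t)\bigl(\nabla_{\dot{\gamma}(t)} X(t)\bigr).
\end{equation*}
The first term vanishes by \eqref{CM} and the second because $X(t)$ is $\nabla$-parallel. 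Hence $Y$ is $\hnabla$-parallel along $\hgamma$, so $Y(t)=P_0^t(\hgamma)(A(0)X_0)$, i.e.\ $A(t)\bigl(P_0^t(\gamma)X_0\bigr)=P_0^t(\hgamma)\circ A(0)\,X_0$. Since $X_0$ is arbitrary and $P_0^t(\gamma)$ has inverse $P_t^0(\gamma)$, this gives the claimed formula $A(t)=P_0^t(\hgamma)\circ A(0)\circ P_t^0(\gamma)$.

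For the implication $A(0)\in Q\Rightarrow A(t)\in Q$, I will use that the parallel transports $P_0^t(\gamma)$ and $P_0^t(\hgamma)$ are linear isometries of Euclidean spaces, as $\nabla$ and $\hnabla$ are Levi-Civita connections; in particular they preserve orthogonal complements and send surjections to surjections. When $n\leq \hn$, for every $X,Y\in T_{\gamma(t)}M$ one has $\hg(A(t)X,A(t)Y)=\hg(A(0)P_t^0(\gamma)X,A(0)P_t^0(\gamma)Y)=g(P_t^0(\gamma)X,P_t^0(\gamma)Y)=g(X,Y)$, using successively the $\hg$-isometry $P_0^t(\hgamma)$, the isometry $A(0)$ on all of $T_{\gamma(0)}M$, and the $g$-isometry $P_t^0(\gamma)$. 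When $n\geq \hn$, the identity $(\ker A(t))^\perp=P_0^t(\gamma)\bigl((\ker A(0))^\perp\bigr)$ follows from the fact that $P_0^t(\gamma)$ is orthogonal and $A(0)$ is post-composed with an isomorphism; surjectivity onto $T_{\hgamma(t)}\hM$ and the isometric behavior on $(\ker A(t))^\perp$ then follow from the corresponding properties of $A(0)$ in exactly the same way.

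The main subtle point I expect is the Leibniz identity above: strictly speaking $A(t)$ is a curve of tensors along $(\gamma,\hgamma)$, not a global tensor field on $M\times \hM$, so the $\nablabar$ appearing in \eqref{CM} must be understood as the induced connection on the pullback of $T^*M\otimes T\hM$ by $(\gamma,\hgamma)$. Once one checks that this induced connection is the tensor product of the pullbacks of $\nabla$ and $\hnabla$ (a direct consequence of the splitting of $\nablabar$ on the product), the product rule applied to the contraction $A\cdot X$ becomes a routine identity and the rest of the proof is essentially bookkeeping.
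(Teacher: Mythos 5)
Your proof is correct and follows essentially the same approach as the paper: both hinge on the Leibniz identity $\hnabla_{\dot{\hgamma}}(A(t)X(t)) = \bigl(\nablabar_{(\dot\gamma,\dot{\hgamma})}A\bigr)X(t) + A(t)\nabla_{\dot\gamma}X(t)$ coming from the product structure of $\nablabar$. The only cosmetic differences are that the paper defines $B(t):=P_0^t(\hgamma)\circ A(0)\circ P_t^0(\gamma)$, verifies $\nablabar B=0$ and invokes ODE uniqueness, whereas you derive the formula directly by showing $A(t)X(t)$ is parallel when $X$ is; and for the second assertion the paper differentiates $\|A(t)Y(t)\|_{\hg}^2$ whereas you substitute the already-proved formula and use that the $P_0^t$ maps are isometries, which is marginally more economical since it reuses the first part.
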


\begin{proof}
For the first implication,
define $B(t):=P_0^t(\hgamma)\circ A(0)\circ P_t^0(\gamma)$.
Evidently $B(0)=A(0)$,
and if $X(t)$ is an arbitrary vector field along $\gamma(t)$,
we have that $B(t)X(t)$ is a vector field along $\hgamma(t)$, and
\[
&\big(\overline{\nabla}_{(\dot{\gamma}(t),\dot{\hgamma}(t))}B(t)\big)X(t)
+B(t)\nabla_{\dot{\gamma}(t)} X(t)
=\hnabla_{\dot{\hgamma}(t)}(B(t)X(t))
=\hnabla_{\dot{\hgamma}(t)}\Big(P_0^t(\hgamma)\big(A(0)P_t^0(\gamma)X(t)\big)\Big) \\
=&P_0^t(\hgamma)\frac{d}{dt}\big(A(0)P_t^0(\gamma)X(t)\big)
=(P_0^t(\hgamma)\circ A(0)\circ P_t^0(\gamma))\big(\nabla_{\dot{\gamma}(t)}X(t)\big)
=B(t)\nabla_{\dot{\gamma}(t)}X(t),
\]
which, since $X(t)$ was arbitrary, would mean that
$\overline{\nabla}_{(\dot{\gamma}(t),\dot{\hgamma}(t))}B(t)=0$.
By the basic uniqueness result for the first order ODEs,
we thus have $A(t)=B(t)$ for all $t\in I$.

For the second implication, let $Y \in T_{\gamma(0)} M $, $\hY \in T_{\hgamma(0)} \hM$ and set $Y(\cdot)$, $\hY(\cdot) $  the parallel transports of $Y,\hY$ along $\gamma (.)$ and $\hgamma(.)$ respectively. Next, suppose that $A(0) \in Q |_{(\gamma(0), \hgamma(0))}$ and denote $A(t) = P_0^{t} (\hgamma) \circ A(0) \circ P_{t}^0 (\gamma)$. Then $A(0) \in \tsmthm$, but from the first implication we obtain $A(t) \in  \tsmthm$ for all $t \in I$. So $A(t) Y(t) \in T_{\hgamma (t)} \hM$ and thus,
$$
\frac{d}{dt} \Vert A(t) Y (t) \Vert_{\hg}^2 = 2 \hg ((\nablabar_{(\dot{\gamma} (t) , \dot{\hgamma} (t))} A(.)) Y(t) + A (t) (\nabla_{\dot{\gamma} (t)} Y (.)) , A(t) Y(t)) =0.
$$
If $n \leq \hn$, the initial condition for the preceding term is $\Vert A(0) Y (0) \Vert_{\hg}^2 = \Vert A(0) Y \Vert_{\hg}^2 = \Vert Y \Vert_{g}^2$. On the other hand, $\frac{d}{dt} \Vert Y (t) \Vert_{g}^2 = 0$ and the initial condition is $\Vert Y (0) \Vert_{g}^2 = \Vert Y \Vert_{g}^2$. So, $\Vert A(t) Y (t) \Vert_{\hg}^2 = \Vert Y (t) \Vert_{g}^2$. Since the parallel transport $P_0^{t} (\gamma) : T_{\gamma(0)} M \rarrow T_{\gamma(t)} M$ is a linear isometric isomorphism for every $t$, this proves $ \hg (A(t) X , A(t) Y) = g (X,Y)$ for every $X$, $Y \in T_{\gamma(t)} M$. If $n \geq \hn$, we are able to repeat the previous method due to the fact $Y(t) \in (\ker A(t))^{\perp}$ if and only if $Y \in (\ker A(0))^{\perp}$.
\end{proof}

\begin{definition}\label{d1}
\begin{itemize}
\item[(i)]
Given $q=(x,\hx;A) \in \tsmthm$ and $X\in T_{x}M$, $\hX \in T_{\hx} \hM$,
one defines the no-spinning lift of $(X,\hat{X})$ to be the unique vector $\lns(X, \hat{X}) |_{q}$ of $\tsxmthxm$ at $q$
given by
\[
\lns(X, \hX)|_q = \frac{d}{dt} \big|_0 P_0^{t} (\hgamma) \circ A \circ P_{t}^0 (\gamma)  \quad \big(\in T_{q} (\tsmthm)\big),
\]
where $\gamma$ (resp. $\hat{\gamma}$) is any smooth curves on $M$ (resp. $\hM$)
such that $\gamma(0)=x$, $\dot{\gamma}(0)=X$ (resp. $\hat{\gamma}(0)=\hx$, $\dot{\hat{\gamma}}(0)=\hX$).

Moreover, if $X,\hat{X}$ are (locally defined) vector fields on $M,\hat{M}$, respectively, one writes $\lns(X,\hat{X})$
for the (locally defined) vector field on $\tsmthm$ whose value at $q$ is $\lns(X,\hat{X})|_q$.

\item[(ii)]
No-Spinning distribution $\dns$ on $\tsmthm$ is an $(n+\hn)$-dimensional smooth distribution, whose plane at $q=(x,\hx;A) \in \tsmthm$ is defined by
\[
\dns |_{q} = \lns (T_{(x,\hx)} (M \times \hM)) |_{q}.
\]
\end{itemize}
\end{definition}
By Proposition \ref{p1}, $\lns$ can be restricted to $Q$ so that
\[
\lns(X,\hat{X})|_q\in T_qQ,\quad \dns |_{q}\subset T_qQ,
\]
for any $q\in Q$ and $X\in T_x M$, $\hX\in T_{\hx} \hM$ as in the definition above.

Hence, we have $\dns |_{Q}$ is an $(n + \hn)$-dimensional (smooth) distribution on $Q$,
which we also write as $\dns$ in the sequel. The next proposition gathers basic properties of $\dns$.
\begin{proposition}\label{p2}
\begin{enumerate}
\item[$1.$]
$(\pi_{\tsmthm})_{*}$ (resp. $(\pq)_{*}$) maps $\dns |_{q}$ isomorphically onto $T_{(x,\hx)} (M \times \hM)$ for every $q =(x,\hx;A) \in \tsmthm$ (resp. $ \in Q$).
\item[$2.$]
If $\Xbar \in T_{(x,\hx)} (M \times \hM)$, $A$ is a local section of $\pi_{\tsmthm}$ and $A_{*}$ its push-forward, then we have:
\begin{equation}\label{D}
\lns (\Xbar) |_{A(x, \hx)} = A_{*} (\Xbar) - \nu (\nablabar_{\Xbar} A) |_{A(x, \hx)}.
\end{equation}
\item[$3.$]
An absolutely continuous curve $t \mapsto q(t)=(\gamma(t), \hgamma(t); A(t))$ on $\tsmthm$ or $Q$ 
is tangent to $\dns$ for a.e. $t$ if and only if $\nablabar_{(\dot{\gamma} (t), \dot{\hgamma} (t))} A =0$ for a.e. $t$.
\end{enumerate}
Recall that $\nablabar$ is the product (Levi-Civita) connection on $\overline{M}=M\times\hat{M}$.
\end{proposition}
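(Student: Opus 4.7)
The plan is to prove the three items in order, since (1) follows directly from the definition, (2) is the key computational identity, and (3) is an immediate consequence of (2).

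For item (1), I observe that by construction the curve $t\mapsto P_0^t(\hgamma)\circ A\circ P_t^0(\gamma)$ lies in $T^*_{\gamma(t)}M\otimes T_{\hgamma(t)}\hM$, hence projects via $\pi_{\tsmthm}$ to $(\gamma(t),\hgamma(t))$. Differentiating at $t=0$ yields $(\pi_{\tsmthm})_*\lns(X,\hX)|_q=(X,\hX)$, so $\lns(\cdot)|_q:T_{(x,\hx)}(M\times\hM)\to T_q(\tsmthm)$ is a right-inverse of $(\pi_{\tsmthm})_*$ and is in particular injective. Its image is $\dns|_q$ by definition, and since $\dim\dns|_q=n+\hn=\dim T_{(x,\hx)}(M\times\hM)$, the restriction $(\pi_{\tsmthm})_*\!\restriction\!\dns|_q$ is a linear isomorphism onto $T_{(x,\hx)}(M\times\hM)$. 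The statement for $\pq$ and $q\in Q$ follows because Proposition \ref{p1} guarantees that the curve $P_0^t(\hgamma)\circ A\circ P_t^0(\gamma)$ stays inside $Q$ whenever $A\in Q$, so $\lns(X,\hX)|_q\in T_qQ$ for $q\in Q$, and the same dimension count applies.

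For item (2), set $\Xbar=(X,\hX)$ and let $\gamma,\hgamma$ be integral curves of $X,\hX$ with $\gamma(0)=x$, $\hgamma(0)=\hx$. The key trick is to freeze the base point by introducing
\[
B(t):=P_t^0(\hgamma)\circ A(\gamma(t),\hgamma(t))\circ P_0^t(\gamma),
\]
which is a curve in the fixed vector space $\tsxmthxm$ with $B(0)=A(x,\hx)$ and $B'(0)=\nablabar_{\Xbar}A$ by the very definition of the Levi-Civita covariant derivative of the tensor $A$ along $(\gamma,\hgamma)$. Writing $B(t)=B(0)+tB'(0)+O(t^2)$ and observing that $A(\gamma(t),\hgamma(t))=P_0^t(\hgamma)\circ B(t)\circ P_t^0(\gamma)$, one differentiates at $t=0$ and splits the derivative into two pieces: one where $B(t)$ is replaced by the constant $B(0)=A(x,\hx)$, producing precisely $\lns(\Xbar)|_{A(x,\hx)}$, and one where the parallel transports are replaced by the identity at $t=0$, producing the vertical vector $\nu(B'(0))|_{A(x,\hx)}=\nu(\nablabar_{\Xbar}A)|_{A(x,\hx)}$. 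The left-hand side equals $A_*(\Xbar)$ by the chain rule, giving \eqref{D}.

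For item (3), the same computation adapts verbatim to a curve $q(t)=(\gamma(t),\hgamma(t);A(t))$ rather than a section: defining $B(t)=P_t^0(\hgamma)\circ A(t)\circ P_0^t(\gamma)$ on a small interval around a given time, one gets $\dot q(t)=\lns(\dot\gamma(t),\dot\hgamma(t))|_{q(t)}+\nu(\nablabar_{(\dot\gamma(t),\dot\hgamma(t))}A)|_{q(t)}$, and since horizontal $(\dns)$ and vertical $(V(\pq))$ subspaces at $q(t)$ intersect trivially (both have complementary dimensions summing to $\dim Q$ by item (1)), one concludes $\dot q(t)\in\dns|_{q(t)}$ if and only if $\nablabar_{(\dot\gamma(t),\dot\hgamma(t))}A=0$. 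The same argument works on $\tsmthm$ without the restriction to $Q$. The only mildly delicate point, and the place I would be most careful, is the transition from the smooth-section case of item (2) to the absolutely continuous curve case in item (3): one must justify that the decomposition of $\dot q(t)$ into horizontal and vertical parts still makes sense pointwise almost everywhere, which is immediate from the fact that $B'(t)$ exists a.e. whenever $A(t)$ is absolutely continuous.
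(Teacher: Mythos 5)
Your proof is correct and follows the same line of reasoning that the paper delegates to Propositions 3.20 and 3.22 of \cite{ChitourKokkonen}: namely, freeze the fiber by conjugating with parallel transports, split the derivative via the chain rule into a horizontal piece (the no-spinning lift) and a vertical piece (the covariant derivative), and then use the trivial intersection of $\dns$ and $V(\pq)$ for item (3). The paper itself gives only a one-line reduction of item (3) to Eq.~\eqref{D}; you have supplied the underlying computation, which is precisely the argument the paper has in mind.

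One small remark on phrasing in item (2): when you ``split the derivative into two pieces,'' what is actually happening is the chain rule applied to $f(t,s)=P_0^t(\hgamma)\circ B(s)\circ P_t^0(\gamma)$ evaluated along the diagonal $s=t$ at $0$; the two partials give $\lns(\Xbar)|_{A(x,\hx)}$ and $\nu(B'(0))|_{A(x,\hx)}$. This is what you mean, but stating it this way would make the justification of the ``split'' rigorous rather than heuristic. Similarly in item (3), the decisive fact is not that $B'(t)$ exists a.e.\ in the abstract, but that wherever $\dot q(t)$ exists the linear decomposition $T_{q(t)}(\tsmthm)=\dns|_{q(t)}\oplus V|_{q(t)}(\pi_{\tsmthm})$ applies pointwise, and the vertical component of $\dot q(t)$ is exactly $\nu(\nablabar_{(\dot\gamma(t),\dot\hgamma(t))}A)|_{q(t)}$; absolute continuity of $q$ together with smoothness of the parallel transports in $t$ is what gives $B'(t)$ a.e. These are minor clarifications; the mathematical content is sound.
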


\begin{proof}
The proofs of parts $1.$ and $2.$ follow that of Proposition 3.20 and Proposition 3.22 of Section 3 in \cite{ChitourKokkonen}.
Part $3.$ is a consequence of Eq. (\ref{D}) so that
\begin{equation*}
\lns (\dot{\gamma} (t) , \dot{\hgamma} (t)) |_{q(t)} = \dot{A} (t) - \nu (\nablabar_{(\dot{\gamma} (t) , \dot{\hgamma} (t))} A) |_{q(t)}.
\end{equation*}
\end{proof}

\begin{remark}\label{r1}
In the previous proposition, 
the two terms on the right side of Eq. (\ref{D})
are separately elements of $T_{q} (\tsmthm )$, but their difference belongs to $T_{q} Q$. Moreover, this equation indicates the decomposition of the map $A_{*}$ with respect to the two direct sum decompositions:
\[
T (\tsmthm) =& \dns \oplus_{\tsmthm} V(\pi_{\tsmthm}), \\
TQ =& \dns \oplus_{Q} V(\pq).
\]
\end{remark}

We shall now define a subdistribution $\dr$ of $\dns$ which has the property
that tangent curves to $\dr$ are exactly those curves in $Q$ (or $\tsmthm$) that verify both the no-slipping and no-spinning conditions,
i.e., are the curves modelled by the system $\Sigma_{(R)}$.

\begin{definition}\label{d2}
\begin{itemize}
\item[(i)]
For any $q=(x,\hx;A) \in \tsmthm$, the rolling lift of $X \in T_{x}M$
is the vector $\lr(X)|_q$ of $\tsmthm$ at $q$ defined by
\begin{align} \label{C}
\lr(X)|_q:= \lns (X, AX) |_{q}.
\end{align}
Moreover, if $X$ is a (locally defined) vector field on $M$, one writes $\lns(X)$
for the (locally defined) vector field on $\tsmthm$ whose value at $q$ is $\lns(X)|_q$.

\item[(ii)]
The Rolling distribution $\dr$ on $\tsmthm$ is the $n$-dimensional smooth distribution
whose plane at every $q=(x,\hx;A) \in \tsmthm$ is given by
\begin{align}\label{N}
\dr |_{q} := \lr (T_{x} M) |_{q}.
\end{align}
\end{itemize}
\end{definition}

Like right below the definition \ref{d1}, one can restrict $\lr$ to $Q$ such that
\[
\lr(X)|_q\in T_q Q,\quad \dr |_{q}\subset T_q Q,
\]
for all $q=(x,\hat{x};A)\in Q$ and $X\in T_xM$.

\begin{corollary}
\begin{itemize}
\item[(i)] $(\pqm)_{*}$ maps $\dr |_{q}$ isomorphically onto $T_x M$ for
for every $q=(x,\hat{x};A)\in \tsmthm$ (resp. $q\in Q$).

\item[(ii)] An absolutely continuous curve $t \mapsto q(t)=(\gamma(t), \hgamma(t); A(t))$ on $\tsmthm$ (resp. on $Q$)
is a rolling curve if and only if it is tangent to $\dr$ for a.e. $t$ i.e. if and only if $\dot{q} (t) = \lr (\dot{\gamma} (t)) |_{q(t)}$
for a.e. $t$.
\end{itemize}
\end{corollary}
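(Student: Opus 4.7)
The plan is to reduce both parts of the corollary to the corresponding facts about the no-spinning objects $\lns$ and $\dns$ already established in Proposition \ref{p2}, exploiting the tautological definition $\lr(X)|_q = \lns(X,AX)|_q$.

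For part (i), I would start by observing that $\dr|_q = \lns(W_q)|_q$ where $W_q := \{(X,AX) \mid X\in T_xM\}$ is the graph of $A:T_xM \to T_{\hx}\hM$ sitting inside $T_{(x,\hx)}(M\times\hM)$. Since $X\mapsto (X,AX)$ is a linear injection, $W_q$ has dimension $n$. By Part 1 of Proposition \ref{p2}, $\lns|_q$ is a linear isomorphism from $T_{(x,\hx)}(M\times\hM)$ onto $\dns|_q$, and hence restricts to a linear isomorphism from $W_q$ onto $\dr|_q$, giving $\dim \dr|_q = n$. Now $\pi_{Q,M} = \mathrm{pr}_M \circ \pi_Q$, so for any $X\in T_xM$ the chain rule yields $(\pqm)_*(\lr(X)|_q) = \mathrm{pr}_{M,*}\bigl((\pi_Q)_*\lns(X,AX)|_q\bigr) = \mathrm{pr}_{M,*}(X,AX) = X$. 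This shows $(\pqm)_*$ is onto $T_xM$ when restricted to $\dr|_q$, and a dimension count (both sides $n$-dimensional) makes it an isomorphism. The same argument works verbatim on $\tsmthm$ using Part 1 of Proposition \ref{p2} in its ambient form.

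For part (ii), I would unwind the definition of a rolling curve, which by \eqref{A} means simultaneously the no-spinning condition \eqref{CM} and the no-slipping condition \eqref{CN}. By Part 3 of Proposition \ref{p2}, condition \eqref{CM} is equivalent to $\dot{q}(t) = \lns(\dot{\gamma}(t),\dot{\hgamma}(t))|_{q(t)}$ for a.e.\ $t$. Substituting the no-slipping identity $\dot{\hgamma}(t) = A(t)\dot{\gamma}(t)$ into this equality and invoking the defining formula \eqref{C} gives $\dot{q}(t) = \lns(\dot{\gamma}(t), A(t)\dot{\gamma}(t))|_{q(t)} = \lr(\dot{\gamma}(t))|_{q(t)}$, which is exactly tangency to $\dr$. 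Conversely, if $\dot{q}(t) = \lr(\dot{\gamma}(t))|_{q(t)}$ then $\dot{q}(t) \in \dns|_{q(t)}$, so Part 3 of Proposition \ref{p2} gives \eqref{CM}; and applying $(\pi_Q)_*$ to both sides of $\dot{q}(t) = \lns(\dot{\gamma}(t), A(t)\dot{\gamma}(t))|_{q(t)}$ and using Part 1 of Proposition \ref{p2} forces $(\dot{\gamma}(t),\dot{\hgamma}(t)) = (\dot{\gamma}(t), A(t)\dot{\gamma}(t))$, which is the no-slipping condition \eqref{CN}.

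There is no serious obstacle here: both items are formal consequences of Proposition \ref{p2} together with the definition $\lr(X) = \lns(X,AX)$. The only point requiring a little care is the identification of $\dr|_q$ with $\lns(W_q)|_q$ and the verification that $W_q$ is $n$-dimensional, so that injectivity of $\lns$ on $\dns|_q$ transfers to injectivity of $(\pqm)_*$ on $\dr|_q$ via the factorization $\pqm = \mathrm{pr}_M \circ \pi_Q$.
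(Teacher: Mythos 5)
Your proposal is correct and proceeds along exactly the route the paper leaves implicit: the corollary is stated without proof as a direct consequence of Proposition \ref{p2} together with the defining relation $\lr(X)|_q = \lns(X,AX)|_q$, and your argument---identifying $\dr|_q$ with the image under $\lns$ of the graph of $A$, then factoring $\pqm = \mathrm{pr}_M\circ\pi_Q$ for part (i), and substituting the no-slip identity into the no-spin characterization from Part 3 of Proposition \ref{p2} for part (ii)---is precisely the intended reasoning.
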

While some of the results that follow hold true
in both spaces $Q$ and $\tsmthm$, we mainly focus on $Q$, which is the state space of primary interest
for the purposes of rolling. The generalization of such a result to $\tsmthm$, if it makes sense there,
is usually transparent, and, if need be, we will use such generalizations without further mention
for convenience in some of the forthcoming proof.

We have the following fundamental result whose proof follows the
same lines as that of Proposition 3.27 of Section 3 in \cite{ChitourKokkonen}.
\begin{proposition}\label{p3}
\begin{itemize}
\item[(i)] For every $q_0 = (x_0 , \hx_0 ; A_0) \in Q$ and every absolutely continuous $\gamma : [0,a] \rarrow M$, $a > 0$, such that $\gamma (0) = x_0$, there exists a unique absolutely continuous $q: [0,a'] \to Q$, $q (t) = ( \gamma (t) , \hat{\gamma} (t) ; A(t)) $, with $0 < a' \leq a$

which is tangent to $\dr$ a.e. and $q(0) = q_0$. We denote this unique curve $q$ by
\begin{equation}
t \mapsto q_{\dr} (\gamma, q_{0}) (t) = ( \gamma (t) , {\hat{\gamma}}_{\dr} (\gamma, q_0) (t) ; A_{\dr} (\gamma, q_0) (t)),
\end{equation}
and refer to it as the rolling curve with initial conditions $(\gamma, q_0)$, or along $\gamma$ with initial position $q_0$.

\item[(ii)] Moreover, if $(\hM,\hg)$ is a complete manifold, one can choose $a'=a$ above.
\item[(iii)] Conversely, any absolutely continuous curve $q: [0,a] \mapsto Q$ tangent to $\dr$ a.e. has the form $q_{\dr} (\gamma, q(0))$ where $\gamma = \pqm \circ q$.
\end{itemize}
\end{proposition}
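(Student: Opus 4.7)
The strategy is to reduce the assertion to a standard Carath\'eodory existence-uniqueness result for ODEs, using the coordinate description of $\dr$ and then exploit the parallel-transport formula of Proposition \ref{p1} to handle global extension.

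\textbf{Plan for (i).} Cover $M$ and $\hM$ by charts and use the local trivializations of $\pq: Q \to M\times\hM$ furnished by Corollary \ref{cor-not}. Fix $q_0 = (x_0, \hx_0; A_0)$. Near $(x_0,\hx_0)$, choose $g$-orthonormal and $\hg$-orthonormal frames and write $A$ in these frames as an $\hn\times n$ matrix valued in $O_n(\mathbb{R}^{\hn})$. The defining equations of a rolling curve, namely $\dot{\hgamma}(t) = A(t)\dot{\gamma}(t)$ together with $\overline{\nabla}_{(\dot\gamma,\dot{\hgamma})}A = 0$, read in these local coordinates as a first-order ODE system
\[
\dot{\hgamma}(t) = F_1(\gamma(t),\hgamma(t),A(t),\dot\gamma(t)), \qquad \dot A(t) = F_2(\gamma(t),\hgamma(t),A(t),\dot\gamma(t)),
\]
where $F_1, F_2$ are smooth in $(\hgamma,A)$ and linear (hence measurable and locally integrable in $t$) in $\dot\gamma(t)$. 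Carath\'eodory's theorem then produces a unique absolutely continuous solution $(\hgamma(t),A(t))$ on some maximal interval $[0,a')\subseteq[0,a]$, with $A(t)\in Q$ for all $t$ by Proposition \ref{p1}. This gives the rolling curve $q_{\dr}(\gamma,q_0)$.

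\textbf{Plan for (ii).} Assume $(\hM,\hg)$ is complete and suppose for contradiction that the maximal $a'$ is strictly less than $a$. Since $A(t)\in Q$, the norm inequality
\[
\|\dot{\hgamma}(t)\|_{\hg} = \|A(t)\dot\gamma(t)\|_{\hg} \leq \|\dot\gamma(t)\|_g
\]
holds for a.e.\ $t$ (this is an equality when $n\leq\hn$, and follows from $AA^T=\id$ when $n\geq\hn$). Integrating yields
\[
\int_0^{a'}\|\dot{\hgamma}(t)\|_{\hg}\,dt \leq \int_0^{a}\|\dot\gamma(t)\|_g\,dt < \infty,
\]
so $\hgamma:[0,a')\to\hM$ is a Cauchy curve of finite length and, by completeness of $\hM$, extends continuously to $t=a'$ with a limit $\hx_1\in\hM$. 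Now Proposition \ref{p1} forces
\[
A(t) = P_0^t(\hgamma)\circ A_0\circ P_t^0(\gamma),
\]
and since parallel transport along both $\gamma$ and $\hgamma$ is defined up to and including $t=a'$ (with $\hgamma$ extended as above), $A(t)$ extends continuously to $q(a') = (\gamma(a'),\hx_1;A(a'))\in Q$. Restarting the local existence argument of (i) at this new initial condition strictly extends the solution beyond $a'$, contradicting maximality. Hence $a'=a$.

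\textbf{Plan for (iii).} Given an absolutely continuous $q:[0,a]\to Q$ with $\dot q(t)\in\dr|_{q(t)}$ for a.e.\ $t$, set $\gamma := \pqm\circ q$ and $\hgamma := \pqhm\circ q$. By the defining property \eqref{C} of $\lr$ and the fact that $(\pqm)_*$ is an isomorphism $\dr|_{q}\to T_xM$, one reads off that $\dot q(t) = \lr(\dot\gamma(t))|_{q(t)}$, so $\dot{\hgamma}(t)=A(t)\dot\gamma(t)$ and $\overline{\nabla}_{(\dot\gamma,\dot{\hgamma})}A=0$ hold a.e. Thus $q$ satisfies the same ODE system as $q_{\dr}(\gamma,q(0))$ with the same initial data, and uniqueness from (i) gives $q = q_{\dr}(\gamma,q(0))$.

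\textbf{Main obstacle.} The delicate point is the global extension in (ii): the bundle $Q$ is not a product, and a priori $A(t)$ could leave every coordinate chart before $\hgamma(t)$ converges. The parallel-transport formula from Proposition \ref{p1} is the essential ingredient that trivialises this difficulty, because it exhibits $A(t)$ as a continuous function of $(\gamma(t),\hgamma(t))$ and the initial datum, reducing the escape question to the length bound on $\hgamma$ and hence to completeness of $\hM$.
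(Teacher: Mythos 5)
Your proof is correct and follows essentially the same line as the paper's (which the paper outsources to Proposition~3.27 of \cite{ChitourKokkonen} and summarizes via the development map $\Lambda$ in Remark~\ref{r9}): local existence and uniqueness by Carath\'eodory in a frame trivialization of $\pq$, the explicit parallel-transport formula $A(t)=P_0^t(\hgamma)\circ A_0\circ P_t^0(\gamma)$ from Proposition~\ref{p1} to control $A(t)$, the length estimate $\Vert\dot{\hgamma}\Vert_{\hg}\leq\Vert\dot{\gamma}\Vert_g$ together with completeness of $\hM$ to rule out finite-time escape, and the isomorphism $(\pqm)_*\colon\dr|_q\to T_xM$ to identify any $\dr$-admissible curve with a rolling curve. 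The cited reference phrases existence through the (anti-)development maps $\Lambda_{x_0}$, $\hat{\Lambda}_{\hx_0}$ rather than raw local ODEs, but this is the same ODE wrapped in different notation, and completeness of $\hM$ plays the identical role of ensuring $\hat{\Lambda}_{\hx_0}^{-1}$ is defined on the whole image of $A_0\circ\Lambda_{x_0}$.
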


\begin{remark}\label{r9}
Let $(N,h)$ be a Riemannian manifold and $y_0 \in N$, we define a bijection $\Lambda_{y_0}^{\nabla^{h}} (\cdot)$ from the set of absolutely continuous curves $\gamma : [0,1] \rarrow N$ starting at $y_0$ onto an open subset of the Banach space of absolutely continuous curves $[0, 1] \rarrow T_{y_0} N$ starting at 0, by
\begin{equation*}
\Lambda_{y_0}^{\nabla^{h}} (\gamma) (t) = \int_0^{t} (P^{\nabla^{h}})_{s}^{0} (\gamma) \dot{\gamma}(s) d s \text{    } \in T_{y_0} N, \text{    } \forall t \in [0,1].
\end{equation*}
It follows from Proposition \ref{p3} that the rolling curve with initial conditions $(\gamma, q_0)$ is given by:
$$
q_{\dr} (\gamma, q_0) (t) = (\gamma (t), {\hat{\Lambda}}_{\hx_0}^{-1} (A_0 \circ \Lambda_{x_0} (\gamma)) (t); P_0^{t} ({\hat{\Lambda}}_{\hx_0}^{-1} (A_0 \circ \Lambda_{x_0} (\gamma))) \circ A_0 \circ P_{t}^0 (\gamma)).
$$
Moreover, if the curve $\gamma$ is the geodesic on $M$ given by $\gamma (t) = exp_{x_0} (tX)$ with $\gamma (0) = x_0$ and $\dot{\gamma} (0) = X \in T_{x_0}M$, then, for $q_0 = (x_0 , \hx_0 ;A_0) \in Q$, the rolling curve $q_{\dr} (\gamma, q_0) : [0, a'] \rarrow Q$, $0 < a' \leq a$, is given by
$$
q_{\dr} (\gamma, q_0) (t) = (\gamma (t) , \hgamma_{\dr} (\gamma , q_0) (t) = \widehat{exp}_{\hx_0} (tA_0 X) , A_{\dr} (\gamma , q_0) (t) = P_0^{t} (\hgamma_{\dr} (\gamma , q_0)) \circ A_0 \circ P_{t}^0 (\gamma)).
$$
We also have that if $\hM$ is complete then $a=a'$.
\end{remark}

Let $\widehat{\lns}$ and $\widehat{\lr}$ (resp. $\widehat{\dns}$ and $\widehat{\dr}$) be the no-spinning and rolling lifts (resp. the no-spinning and rolling distributions), respectively, on $\hat{Q}:= Q (\hM,M)$. Thus, $\dim \widehat{\dns} = n + \hn = \dim \dns$ but, in contrary, $\dim \widehat{\dr} = \hn$, $\dim \dr=n$. This shows that the model of rolling of manifolds of different dimensions against each other is not symmetric with respect to $M$ and $\hM$.

\begin{proposition}\label{p12}
Let $\tbar$ the mapping defined by (\ref{DT}), we have the followings results:
\begin{enumerate}
\item[$1.$]
$\tbar_{*} \dns = \widehat{\dns}$,
\item[$2.$]
$\tbar_{*} V (\pq) = V (\pi_{\hat{Q}} )$,
\item[$3.$]
when $n \leq \hn$, we have $\tbar_{*} \dr \subset \widehat{\dr}$.
\end{enumerate}
\end{proposition}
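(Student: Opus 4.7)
The plan is to prove each of the three items in turn, using the curve-based characterizations of the distributions already established, together with the defining isometry relations of partial isometries.

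For item 1, my strategy is to verify the inclusion $\tbar_*\dns \subseteq \widehat{\dns}$ and then conclude by a dimension count. To do this, I would use Proposition \ref{p2}(3) together with Proposition \ref{p1}: a curve $q(t) = (\gamma(t),\hgamma(t);A(t))$ is tangent to $\dns$ a.e.\ iff $A(t) = P_0^t(\hgamma)\circ A(0)\circ P_t^0(\gamma)$. Taking transposes, and using the standard fact that parallel transport is an isometry, so that $(P_0^t(\gamma))^T = P_t^0(\gamma)$, I would obtain
\[
A(t)^T = P_0^t(\gamma)\circ A(0)^T\circ P_t^0(\hgamma),
\]
which means exactly that $\tbar\circ q: t\mapsto(\hgamma(t),\gamma(t);A(t)^T)$ is tangent to $\widehat{\dns}$. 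Given any $v\in\dns|_q$, realizing it as the tangent at $0$ of such a curve, $\tbar_*v$ lies in $\widehat{\dns}|_{\tbar(q)}$. Since $\tbar$ is a diffeomorphism and $\dim\dns = n+\hn = \dim\widehat{\dns}$, the inclusion is an equality.

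For item 2, I would compute the pushforward of a generic vertical vector directly. If $B\in\tsxmthxm$ with $\nu(B)|_q\in V|_q(\pq)$, then for any smooth $f$ near $\tbar(q)$,
\[
\tbar_*(\nu(B)|_q)(f) = \frac{d}{dt}\Big|_0 f(\tbar(q+tB)) = \frac{d}{dt}\Big|_0 f(\hx,x;A^T+tB^T) = \widehat{\nu}(B^T)|_{\tbar(q)}(f).
\]
I need to check $\widehat{\nu}(B^T)|_{\tbar(q)}$ is tangent to $\hat Q$; applying Proposition \ref{p9} to $\hat Q$ (note that the roles of the dimensions swap under $\tbar$), this reduces to showing that $A^T B\in\mathfrak{so}(T_xM)$ is equivalent to $B^T A \in\mathfrak{so}(T_xM)$, which is immediate. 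Since $\tbar$ is a diffeomorphism and both vertical spaces have the same dimension, equality follows.

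For item 3, I would combine the previous two steps. By Definition \ref{d2}, $\dr|_q$ is spanned by $\lr(X)|_q = \lns(X,AX)|_q$ for $X\in T_xM$. A curve realizing this tangent vector has initial base-point velocity $(X,AX)$, so its image under $\tbar$ has initial base-point velocity $(AX,X)$; by item 1, $\tbar_*\lr(X)|_q = \widehat{\lns}(AX,X)|_{\tbar(q)}$. This vector belongs to $\widehat{\dr}$ exactly when $X = A^T(AX)$, i.e.\ $A^TA X = X$; this is precisely the relation $A^TA = \id_{T_xM}$, which by Proposition \ref{p10} (and the discussion after the definition of $Q$) holds precisely in the regime $n\leq\hn$. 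Hence $\tbar_*\dr\subseteq\widehat{\dr}$. The inclusion cannot generally be an equality because $\dim\widehat{\dr} = \hn\geq n = \dim\dr$.

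The main subtlety is in item 3: the asymmetry is a genuine feature of the rolling model with $n\neq\hn$. When $n > \hn$ the analogous computation would demand $AA^T = \id_{T_{\hx}\hM}$ applied from the wrong side to recover $X$ from $AX$, and this simply fails; this is also why $\dim\widehat{\dr} = \hn \neq n = \dim\dr$ precludes equality rather than mere inclusion. Items 1 and 2 on the other hand are fully symmetric and pose no real difficulty beyond a careful bookkeeping of transposes.
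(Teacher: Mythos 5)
Your proof is correct and follows essentially the same route as the paper: item 1 by transposing the parallel-transport formula to obtain $\tbar_*\lns(X,\hX)=\widehat{\lns}(\hX,X)$, item 2 by direct computation of the pushforward of a vertical vector and the observation that $A^TB\in\mathfrak{so}(T_xM)$ iff $B^TA\in\mathfrak{so}(T_xM)$, and item 3 by combining item 1 with $A^TA=\id_{T_xM}$ (valid since $n\leq\hn$). The only minor difference is cosmetic: the paper reads equality in item 1 directly off the explicit formula, while you close via a dimension count.
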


\begin{proof}
We can assume, without loss of generality, that $n \leq \hn$.
\newline
\newline
$1.$\quad For $q_0 = (x_0, \hx_0; A_0) \in Q(M, \hM )$, let $\gamma$, $\hgamma$ be a smooth paths in $M$, $\hM$ starting at $x_0$, $\hx_0$, respectively, at t = 0. We have that $ (P_0^t (\hgamma) \circ A_0 \circ P_t^0 (\gamma))^{T} = P_0^t (\gamma) \circ A_0^{T} \circ P_t^0 (\hgamma)$, then
$$
\tbar (\gamma (t), \hgamma (t) ; P_0^t (\hgamma) \circ A_0 \circ P_t^0 (\gamma)) = (\hgamma (t) , \gamma (t);P_0^t (\gamma) \circ \tbar (x_0, \hx_0 ; A_0) \circ P_t^0 (\hgamma)).
$$
This immediately shows, by differentiating it with respect to $\frac{d}{dt} |_0$ and using the definition of $\lns$, that
$$
\tbar_{*} |_{q_0} \lns (X, \hX)|_{q_0} = \widehat{\lns} (\hX,X) |_{\tbar (q_0)},
$$
where $X = \dot{\gamma} (0)$, $\hX = \dot{\hgamma} (0)$. In particular, $\tbar_{*}$ maps $\dns$ isomorphically onto $\widehat{\dns}$.
\newline
\newline
$2.$\quad Let $\nu(B) |_{q = (x,\hx ; A)} \in V |_{q} (\pq)$, $B$ verifies $A^{T} B \in \mathfrak{so} (T_{x} M)$ then $\nu (B^{T}) |_{\tbar(q)} \in V |_{\tbar(q)} (\pi_{\hat{Q}})$. Then, $\tbar_{*} V (\pq) = V (\pi_{\hat{Q}} )$ because we have, for any $\hat{f} \in C^{\infty} (\hat{Q})$,
\begin{align*}
(\tbar_{*} \nu(B) |_{q}) \hat{f} = \nu (B) |_{q} (\hat{f} \circ \tbar) = \frac{d}{ds } |_{0} \hat{f}(\tbar (x,\hx; A + s B)) = \frac{d}{ds} |_0 \hat{f} (\hx,x; A^{T} + s B^{T}) = \nu (B^{T}) |_{\tbar (q)}.
\end{align*}
\newline
$3.$\quad For $q_0 = (x_0, \hx_0; A_0)$ and $X \in T_{x_0} M$, one has
$$
\tbar_{*} |_{q_0} \lr (X)|_{q_0} = \tbar_{*} |_{q_0} \lns (X, A_0 X)|_{q_0}
= \widehat{\lns} (A_0 X,A_0^T A_0X) |_{\tbar (q_0)} = \widehat{\lr} (A_0 X) |_{\tbar (q_0)},
$$
since $X = A_0^{T} (A_0 X) = \tbar (q_0) (A_0 X)$. Hence $\tbar$ maps $\dr$ of $Q(M, \hM )$ into $\widehat{\dr}$ of $Q( \hM ,M)$.

\end{proof}

\subsection{The Lie Brackets on $Q$}

Let $\Oh$ be an immersed submanifold of $\tsmthm$ and write $\pi_{\Oh} := \pi_{\tsmthm} |_{\Oh}$. If $\tbar : \Oh \rarrow T_{m}^{k} (M \times \hM) $ with $\pi_{T_{m}^{k} (M \times \hM)} \circ \tbar = \pi_{\Oh}$ (i.e. $\tbar \in C^{\infty} (\pi_{\Oh}, \pi_{T_{m}^{k} (M \times \hM)})$) and if $q =(x, \hx; A) \in \Oh$ and $\Xbar \in T_{(x,\hx)} (M \times \hM)$ such that $\lns (\Xbar) |_{q} \in T_{q} \Oh$, then we want to define what it means to take the derivative $\lns(\Xbar) |_{q} \tbar$. Our main interest will be the case where $k=1$, $m=0$ i.e. $T (M \times \hM)$, but some arguments below require a general setting. As a first step, we take $\Oh = \tsmthm$. We can inspire, from Eq. \eqref{D}, the following definition
\begin{equation*}
\lns(\Xbar) |_{q} \tbar := \nablabar_{\Xbar} (\tbar(\tilde{A})) - \nu(\nablabar_{\Xbar} \tilde{A})|_{q} \tbar \in {T_{m}^{k} (M \times \hM)}.
\end{equation*}

Here, $\tbar(A) = \tbar \circ A$ is a locally defined $(k,m)$-tensor field on $M \times \hM$. On the other hand, if $\overline{\omega} \in \Gamma (\pi_{T_{k}^{m} (M \times \hM)})$ and if we write $(\tbar \overline{\omega}) (q) := \tbar (q) \overline{\omega} |_{(x,\hx)}$ as a full contraction for $q = (x, \hx ; A) \in \tsmthm$, then we may compute
$$
\begin{array}{rl}
(\lns (\Xbar) |_{q} \tbar) \overline{\omega} = & (\nablabar_{\Xbar} (\tbar (A))) \overline{\omega} - (\frac{d}{dt} |_0 \tbar (A + t \nablabar_{\Xbar} A)) \overline{\omega}\\
\\
= & \nablabar_{\Xbar} (\tbar (A) \overline{\omega}) - \tbar (q) \nablabar_{\Xbar} \overline{\omega} - \frac{d}{dt} |_0 (\tbar (A + t \nablabar_{\Xbar} A) \overline{\omega} )\\
\\
= & \nablabar_{\Xbar} ((\tbar \overline{\omega}) (A)) - \frac{d}{dt} |_0 (\tbar \overline{\omega}) (A + t \nablabar_{\Xbar} A) - \tbar (q) \nablabar_{\Xbar} \overline{\omega}.
\end{array}
$$
Hence,
\begin{equation}\label{E}
(\lns (\Xbar) |_{q} \tbar) \overline{\omega} = \lns (\Xbar) |_{q} (\tbar \overline{\omega} )  - \tbar (q) \nablabar_{\Xbar} \overline{\omega}.
\end{equation}
Alternatively, Eq. \eqref{E} represents an intrinsic definition of $\lns (\Xbar) |_{q} \tbar$.

Now, if $\Oh \subset \tsmthm$ is an immersed submanifold, we could take Eq. \eqref{E} as the definition of $\lns (\Xbar) |_{q} \tbar$ for $q \in \Oh$.
\begin{definition}
Let $\Oh \subset \tsmthm$ be an immersed submanifold, $q= (x,\hx;A) \in \Oh$ and $\Xbar \in T_{(x,\hx)} (M \times \hM)$ be such that $\lns (\Xbar) |_{q} \in T_{q} \Oh$. Then for $\tbar : \Oh \rarrow T_{m}^{k} (M \times \hM)$ such that $\pi_{T_{m}^{k} (M \times \hM)} \circ \tbar = \pi_{\Oh}$, we define $\lns (\Xbar) |_{q} \tbar $ to be the unique element in $T_{m}^{k} |_{(x,\hx)} (M \times \hM)$ such that Eq. (\ref{E}) holds for every $\overline{\omega} \in \Gamma (\pi_{T_{k}^{m} (M \times \hM)})$ and call it the derivative of $\tbar$ with respect to $\lns(\Xbar) |_{q}$.
\end{definition}
We next present the main Lie brackets formulas obtained as in Proposition 3.45, Proposition 3.46, Proposition 3.47 of Section 3 in \cite{ChitourKokkonen}.
\begin{proposition}\label{p4}
Let $\Oh \subset \tsmthm$ be an immersed submanifold, $\tbar = (T , \hT)$, $\sbar = (S, \hS) \in C^{\infty} (\pi_{\Oh}, \pi_{T (M \times \hM)})$ be such that for all $\q \in \Oh$, $\lns (\tbar (q)) |_{q}$, $\lns (\sbar (q)) |_{q} \in T_{q} \Oh$ and $U$, $V \in C^{\infty} (\pi_{\Oh}, \pi_{\tsmthm})$,  be such that for all $\q \in \Oh$, $\nu (U(q)) \onq$, $\nu(V(q)) \onq \in T_{q} \Oh$. Then, one has
\newline
\newline
$1.$
\centerline{
\newline
$
\begin{array}{rc}
[\lns(\tbar (.)), \lns(\sbar(.))] \onq  = & \lns (\lns(\tbar (q)) \onq \sbar - \lns(\sbar (q)) \onq \tbar) \onq \\
 & +   \nu (A R(T(q), S(q)) - \hR (\hT(q) , \hS (q)) A) \onq,\\
\end{array}
$
}
\newline
\newline
\newline
\centerline{
$
[\lns(\tbar (.)), \nu(U(.))] \onq  = - \lns (\nu(U (q)) \onq \tbar) \onq + \nu (\lns( \tbar (q)) \onq U ) \onq,
$
}
\newline
\newline
$3.$
\centerline{
$
[\nu(U(.)) , \nu(V(.))] \onq = \nu (\nu(U(q)) \onq V - \nu(V(q)) \onq U)\onq.
$
}
\newline
\newline
Both sides of the equalities in $1.$ , $2.$ and $3. $ are tangent to $\Oh$.
\end{proposition}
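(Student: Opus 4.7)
The plan is to recognise Proposition \ref{p4} as the standard bracket identities for horizontal and vertical vector fields on a vector bundle equipped with a linear connection. Here the vector bundle is $\pi_{\tsmthm}\colon \tsmthm\rarrow M\times\hM$, the connection is the product Levi-Civita connection $\nablabar$ acting naturally on the tensor bundle $T^*M\otimes T\hM$, and by Proposition \ref{p2} (part 2) the horizontal lift is precisely $\lns$ while the vertical lift is $\nu$. Since every vector field in the statement is, by hypothesis, tangent to the immersed submanifold $\Oh$, it suffices to establish the three identities in the ambient space $\tsmthm$; the tangency of both sides to $\Oh$ is then a direct consequence of the hypotheses together with the form of the right-hand sides.

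Item $3.$ is the easiest. Vertical vector fields preserve the fibers of $\pq$, and each such fiber lies inside the linear space $\tsxmthxm$. The bracket therefore reduces to the standard Lie-bracket formula for vector-valued functions on a vector space, which in the notation of the proposition reads precisely $\nu(\nu(U(q))|_q V-\nu(V(q))|_q U)|_q$. A one-line computation on a smooth test function $f$, using $\nu(W)|_q f=\tfrac{d}{ds}\big|_0 f(A+sW)$, confirms this.

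Item $2.$ is proved by evaluating $[\lns(\tbar),\nu(U)]$ on smooth test functions and unpacking via the intrinsic formula \eqref{E} for the horizontal direction and the definition of $\nu$ for the vertical one. Each of the two surviving contributions has a transparent origin: differentiating $\tbar$ in the vertical direction $\nu(U)|_q$ produces $-\lns(\nu(U)|_q\tbar)|_q$ (the sign coming from the antisymmetry of the bracket), while horizontally differentiating $U$ along $\lns(\tbar(q))|_q$ produces $\nu(\lns(\tbar(q))|_q U)|_q$.

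Item $1.$ is the main obstacle, since this is where the curvature enters. Fix $q_0=(x_0,\hx_0;A_0)\in\Oh$ and pick a local smooth section $A$ of $\pi_{\tsmthm}$ with $A(x_0,\hx_0)=A_0$ and $(\nablabar A)|_{(x_0,\hx_0)}=0$; this is standard for a linear connection on a vector bundle. By Eq.~\eqref{D}, $\lns(\Xbar)|_{A(x,\hx)}=A_*(\Xbar)-\nu(\nablabar_{\Xbar}A)|_{A(x,\hx)}$, which collapses at $q_0$ to $A_*(\Xbar)|_{q_0}$. The bracket $[\lns(\tbar),\lns(\sbar)]|_{q_0}$ thus splits into an $A_*$--$A_*$ piece (producing the horizontal term) and vertical pieces arising from differentiating the correction $\nu(\nablabar_\cdot A)$ along $\lns$. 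After accounting for the $q$-dependence of $\tbar,\sbar$ on $\Oh$ via \eqref{E}, the $A_*$--$A_*$ piece yields $\lns(\lns(\tbar(q_0))|_{q_0}\sbar-\lns(\sbar(q_0))|_{q_0}\tbar)|_{q_0}$; the remaining vertical piece, simplified using $(\nablabar A)|_{(x_0,\hx_0)}=0$, equals $-\nu(\overline{R}(\tbar(q_0),\sbar(q_0))A_0)|_{q_0}$. The final algebraic step is the identity
\[
\overline{R}((T,\hT),(S,\hS))A=\hR(\hT,\hS)\circ A-A\circ R(T,S),
\]
which holds because $\nablabar$ is the product Levi-Civita connection acting on $T^*M\otimes T\hM$, with the $T^*M$-factor contributing through its dual (hence the minus sign in front of $A\circ R(T,S)$). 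Substituting gives the claimed vertical term $\nu(A\,R(T,S)-\hR(\hT,\hS)A)|_{q_0}$. The bookkeeping of signs and orders in this curvature computation, together with the careful handling of how the derivations of $\tbar,\sbar$ on $\Oh$ combine with those of the local section $A$, is the main technical difficulty I expect.
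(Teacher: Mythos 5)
Your approach is correct and is the standard one for such bracket identities; the paper itself does not prove this proposition here, deferring instead to Propositions 3.45--3.47 of \cite{ChitourKokkonen}, which rest on precisely the horizontal/vertical splitting of Eq.~\eqref{D} together with the derivation rule \eqref{E}. Your curvature bookkeeping is right: the induced connection on $T^*M\otimes T\hM$ has curvature $\overline{R}\big((T,\hT),(S,\hS)\big)A=\hR(\hT,\hS)\circ A-A\circ R(T,S)$, the minus sign coming from the dualization on the $T^*M$ factor, and the vertical part of the horizontal--horizontal bracket enters with a leading minus, $-\nu\big(\overline{R}(\tbar,\sbar)A\big)$, which rearranges to the stated $\nu\big(AR(T,S)-\hR(\hT,\hS)A\big)$.

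One point you label a ``technical difficulty'' is worth stating more sharply, because it is where the argument can silently go wrong. The local synchronous section $A$ with $\nablabar A|_{(x_0,\hx_0)}=0$ only controls the vector fields $q\mapsto\lns(\tbar(q))|_q$ \emph{along the image of that section}, whereas the bracket at $q_0$ depends on their values in a full open neighbourhood of $q_0$ in the total space. So you must first trivialize near $q_0$, for instance by writing $q=A(\pi(q))+w$ with a vertical coordinate $w$, and then track the $w$-dependence of $\tbar$ and $\sbar$ via \eqref{E}; it is exactly that dependence which produces the horizontal contribution $\lns\big(\lns(\tbar(q))|_q\sbar-\lns(\sbar(q))|_q\tbar\big)|_q$, while the $\nu(\nablabar_{\cdot}A)$-corrections combine, after evaluation at the base point where $\nablabar A$ vanishes, into $\nablabar_{\tbar}\nablabar_{\sbar}A-\nablabar_{\sbar}\nablabar_{\tbar}A-\nablabar_{[\tbar,\sbar]}A=\overline{R}(\tbar,\sbar)A$. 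Items 2 and 3 are indeed routine once that framework is in place, and the final remark that both sides are tangent to $\Oh$ follows because every derivative on the right-hand sides is taken along a direction that is tangent to $\Oh$ by hypothesis.
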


\section{Rolling Orbits and Rolling Distributions}

In this section, we first characterize the rolling orbits corresponding to the ($NS$) and ($R$) problems and the we provide specific results on $\dr$-orbits in the case $\vert n-\hn\vert =1$.

\subsection{General Properties of Rolling Orbits}
We collect here some basic results on the structure of the orbits and the distributions of the two rolling systems.
To begin with, we completely describe the reachable sets of ($NS$) to the holonomy groups of the Riemannian manifolds $(M,g)$ and $(\hM, \hg)$, which are  Lie subgroups of $SO(n)$ and $SO(\hn)$.

In this setting,  $H |_{x}$ and $\hat{H} |_{\hx}$ denote $H^{\nabla} |_{x}$ and $H^{\hat{\nabla}} |_{\hx}$ respectively(for the notations, see the section \ref{sec:notations}).

The corresponding Lie algebras will be written as $\mathfrak{h} |_{x}$, $\hat{\mathfrak{h}} |_{\hx}$. 
Following the arguments of Theorem 4.1, Corollaries 4.2 and 4.3 and Proposition 4.5 of Section 4 as well as Property 5.2 of Section 5 in \cite{ChitourKokkonen}, one gets the subsequent result.
\begin{theorem}\label{t1}
Let $\qz \in Q$. Then the part of the orbit $\odns (q_0) $ of $\dns$ through $q_0$ that lies in the $\pq$-fiber over $(x_0, \hx_0)$ is
given by
\begin{eqnarray}\label{L}
\begin{array}{c}
\odns (q_0) \cap  \pq^{-1} (x_0, \hx_0) =  \{ \hh \circ A_0 \circ h \mid \hh \in \hH \onhxz, \; h \in H \onxz\} =: \hH \onhxz \circ A_0 \circ H \onxz,
\end{array}
\end{eqnarray}

In addition, at the tangent space level, we have
\begin{eqnarray}\label{DW}
\begin{array}{rl}
T_{q_0} \odns (q_0) \cap V \onqz (\pq) & = \nu (\{ \hat{k} \circ A_0 - A_0 \circ k \mid \;\;\; k \in \mathfrak{h} \onxz, \; \hat{k} \in \hat{\mathfrak{h}} \onhxz \}) \onqz\\
& =: \nu (\hat{\mathfrak{h}} \onhxz \circ A_0 - A_0 \circ  \mathfrak{h} \onxz) \onqz.
\end{array}
\end{eqnarray}
\end{theorem}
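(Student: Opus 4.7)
The plan is to first establish the set-level identity \eqref{L}, then derive the infinitesimal statement \eqref{DW} by differentiating a natural parametrization of the fiber-orbit at the identity. The crucial tool throughout is Proposition \ref{p1}, which asserts that any $\dns$-admissible curve $t\mapsto (\gamma(t),\hgamma(t);A(t))$ starting at $q_0$ satisfies $A(t)=P_0^{t}(\hgamma)\circ A_0\circ P_t^{0}(\gamma)$, and that conversely this closed-form formula produces a $\dns$-admissible curve in $Q$ for any piecewise smooth base curve $(\gamma,\hgamma)$ starting at $(x_0,\hx_0)$.

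For the inclusion $\subseteq$ in \eqref{L}, let $q_1$ lie in the fiber-orbit. By the Orbit Theorem, pick a piecewise $C^1$ $\dns$-admissible curve from $q_0$ to $q_1$; its base projection is a piecewise $C^1$ loop $(\gamma,\hgamma)$ at $(x_0,\hx_0)$, and composing Proposition \ref{p1} segment by segment yields $q_1=\hat{h}\circ A_0\circ h^{-1}$ with $h:=P_0^1(\gamma)\in H|_{x_0}$ and $\hat{h}:=P_0^1(\hgamma)\in \hat{H}|_{\hx_0}$. Since $H|_{x_0}$ is a group, this rewrites $q_1$ as an element of $\hat{H}|_{\hx_0}\circ A_0\circ H|_{x_0}$. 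For the converse inclusion, given any such $h$ and $\hat{h}$, choose piecewise $C^1$ loops $\gamma,\hgamma$ at $x_0,\hx_0$ with $P_0^1(\gamma)=h$ and $P_0^1(\hgamma)=\hat{h}$, concatenate the base curves $t\mapsto(\gamma(t),\hx_0)$ and $t\mapsto(x_0,\hgamma(t))$, and lift via Proposition \ref{p1}. Using that parallel transport along a constant curve is the identity, the lift reaches $\hat{h}\circ A_0\circ h^{-1}$; as $h$ ranges over the group $H|_{x_0}$ so does $h^{-1}$, covering the whole right-hand side.

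For \eqref{DW}, consider the smooth map $\Phi:\hat{H}|_{\hx_0}\times H|_{x_0}\to Q|_{(x_0,\hx_0)}$ defined by $\Phi(\hat{h},h)=\hat{h}\circ A_0\circ h^{-1}$, whose image coincides with the fiber-orbit by \eqref{L}. A direct computation gives $d\Phi|_{(e,e)}(\hat{k},k)=\hat{k}\circ A_0-A_0\circ k$, so the tangent space to the image at $A_0$ is exactly the span of such maps. Since $\dns$ projects isomorphically onto $T(M\times\hat{M})$ by Proposition \ref{p2}, the restriction of $\pq$ to $\odns(q_0)$ is a submersion; consequently the fiber-orbit is a smooth immersed submanifold of $\odns(q_0)$ whose tangent space at $q_0$ is precisely $T_{q_0}\odns(q_0)\cap V|_{q_0}(\pq)$. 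Identifying vertical tangent vectors via $\nu$ then yields \eqref{DW}.

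The main obstacle is twofold: making the concatenation argument in $\supseteq$ rigorous (handled transparently by the closed-form formula from Proposition \ref{p1}) and identifying the tangent space to the fiber-orbit with $T_{q_0}\odns(q_0)\cap V|_{q_0}(\pq)$. The latter rests on the submersion property of $\pq|_{\odns(q_0)}$, itself an immediate consequence of Proposition \ref{p2}(1).
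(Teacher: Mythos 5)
Your proof is essentially correct and follows the natural strategy; the paper itself defers the proof to arguments from the companion paper \cite{ChitourKokkonen}, and your argument reproduces the standard route. For \eqref{L}, using Proposition \ref{p1} together with the Orbit Theorem's reduction to piecewise $C^1$ curves cleanly yields both inclusions, and the group property of $H|_{x_0}$ absorbs the inverse. For \eqref{DW}, you correctly compute $\mathrm{Im}(d\Phi|_{(e,e)})=\{\hat{k}A_0-A_0k\}$ and correctly identify, via the submersion $\pq|_{\odns(q_0)}$, the tangent space to the level set $S:=\odns(q_0)\cap\pq^{-1}(x_0,\hx_0)$ with $T_{q_0}\odns(q_0)\cap V|_{q_0}(\pq)$. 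The one point you gloss over is that these two descriptions compute the tangent space to the same set $S$ via a priori different submanifold structures (level set of a submersion versus Lie-group orbit); the equality $T_{q_0}\odns(q_0)\cap V|_{q_0}(\pq)=\mathrm{Im}(d\Phi|_{(e,e)})$ uses that both $\odns(q_0)$ and the Lie-group orbit $S=\Phi(\hH|_{\hx_0}\times H|_{x_0})$ are initial (weakly embedded) submanifolds, so that a smooth curve in $Q$ lying in $S$ is automatically smooth into either structure. Making this step explicit would close the argument; otherwise the proposal is sound.
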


\begin{proposition}
If $\hM$ is complete, then
for every $q_0\in Q$, the map
$\pi_{\odr (q_0), M}:=\pi_{Q,M} |_{\odr (q_0)}:\odr (q_0) \rarrow M$
defines a smooth subbundle of $\pi_{Q,M}$.

\end{proposition}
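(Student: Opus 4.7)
The strategy is to construct explicit local trivializations of $\pi_{\odr(q_0),M}$ over normal neighborhoods of $M$ by rolling along radial geodesics; the completeness assumption on $\hM$ is essential to guarantee, via Proposition~\ref{p3}(ii), that every rolling curve needed in the construction is defined on the entire parameter interval.

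First I would verify that $\pi_{\odr(q_0),M}$ is onto $M$: for any $x\in M$, pick a piecewise smooth curve $\gamma:[0,1]\to M$ joining $x_0$ to $x$ (using connectedness of $M$); since $\hM$ is complete, the rolling curve $q_{\dr}(\gamma,q_0)$ is defined on $[0,1]$ and its endpoint lies in $\odr(q_0)\cap \pi_{Q,M}^{-1}(x)$. Next, fix $x_1\in M$ and a normal neighborhood $U$ of $x_1$, and for $x\in U$ let $\gamma_{x_1,x}:[0,1]\to U$ denote the unique minimising geodesic from $x_1$ to $x$; the map $(x,s)\mapsto \gamma_{x_1,x}(s)$ is smooth on $U\times[0,1]$. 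Setting $F_1:=\odr(q_0)\cap \pi_{Q,M}^{-1}(x_1)$, I would define
\[
\Phi:U\times F_1\to Q,\qquad \Phi(x,q):=q_{\dr}(\gamma_{x_1,x},q)(1).
\]
Two observations follow immediately: $\Phi$ takes values in $\odr(q_0)\cap \pi_{Q,M}^{-1}(U)$ (because concatenating a $\dr$-admissible curve from $q_0$ to $q$ with the rolling curve from $q$ to $\Phi(x,q)$ yields a $\dr$-admissible curve from $q_0$ to $\Phi(x,q)$), and $\pi_{Q,M}\circ\Phi=\mathrm{pr}_1$. Smoothness of $\Phi$ follows from smooth dependence of solutions to the control system $\Sigma_R$ on initial data and on the driving curve.

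To see that $\Phi$ is a diffeomorphism onto $\odr(q_0)\cap\pi_{Q,M}^{-1}(U)$ I would construct its inverse by rolling back along the reversed geodesic: for $q'\in \odr(q_0)\cap\pi_{Q,M}^{-1}(U)$ with $x':=\pi_{Q,M}(q')$ and $\bar\gamma(s):=\gamma_{x_1,x'}(1-s)$, set $\Psi(q'):=\bigl(x',\,q_{\dr}(\bar\gamma,q')(1)\bigr)$; the uniqueness part of Proposition~\ref{p3}(i) gives $\Phi\circ\Psi=\id$ and $\Psi\circ\Phi=\id$. In particular $\Phi(x,\cdot):F_1\to \odr(q_0)\cap\pi_{Q,M}^{-1}(x)$ is a bijection for every $x\in U$, so $\Phi^{-1}$ is a local trivialization of $\pi_{\odr(q_0),M}$ over $U$ with typical fiber $F_1$, matching the bundle definition given in Section~\ref{sec:notations}.

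The main obstacle I anticipate is verifying that $F_1$ is itself a smooth submanifold of $\pi_{Q,M}^{-1}(x_1)$ in a manner compatible with the ambient smooth structure, so that $\Phi$ realises $\pi_{\odr(q_0),M}$ as a genuine \emph{subbundle} of $\pi_{Q,M}$ (rather than merely as an abstract bundle over $M$), and that the trivializations obtained over different normal neighborhoods are compatible on overlaps. Both points should follow from the Orbit Theorem, which makes $\odr(q_0)$ an immersed smooth submanifold of $Q$ whose smooth structure is characterised by the rolling curves themselves, together with the smooth-dependence bookkeeping already used to see that $\Phi$ is smooth; the rolling map $\Phi$ then transfers the smooth structure of $F_1$ coherently to every fiber $F_x$ for $x\in U$, which is exactly the subbundle condition.
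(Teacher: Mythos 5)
Your construction via rolling along radial geodesics in a normal neighborhood is the right idea, and the inverse trivialization by rolling back along the reversed geodesic (together with the uniqueness in Proposition~\ref{p3}(i)) correctly handles bijectivity. The rôle of completeness of $\hM$ is also correctly identified.

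The one place where you flag an obstacle without resolving it — verifying that $F_1=\odr(q_0)\cap\pi_{Q,M}^{-1}(x_1)$ is a smooth submanifold in a way compatible with the bundle structure — actually has a short, concrete resolution that you should make explicit rather than leaving it to ``the Orbit Theorem plus bookkeeping.'' By Corollary~(i) immediately after Definition~\ref{d2}, $(\pi_{Q,M})_*$ maps $\dr|_q$ isomorphically onto $T_xM$ for every $q=(x,\hx;A)$. Since $\dr|_q\subset T_q\,\odr(q_0)$, this shows that $\pi_{\odr(q_0),M}$ is a smooth \emph{submersion} from the (immersed, by the Orbit Theorem) manifold $\odr(q_0)$ onto $M$; its fibers, in particular $F_1$, are therefore embedded submanifolds of $\odr(q_0)$, and your map $\Phi$ is a diffeomorphism between two submersions over $U$. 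Once this is in place, the compatibility on overlaps is automatic: both $\Phi$ and $\Psi$ are defined purely in terms of the rolling curves and the intrinsic smooth structure on $\odr(q_0)$. One small caveat worth stating: the statement ``smooth subbundle of $\pi_{Q,M}$'' should be read with $\odr(q_0)$ carrying its intrinsic (immersed) smooth structure; $F_1$ need not be an embedded submanifold of the ambient fiber $\pi_{Q,M}^{-1}(x_1)$, only of $\odr(q_0)$, and this is all the bundle structure requires. With these clarifications your argument is complete and matches the standard construction.
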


We next compute the first commutators of $\lr(X)$ where $X \in \VF(M)$. The resulting formulas are obtained as in Proposition 5.9 of Section 5 in \cite{ChitourKokkonen}.

\begin{theorem}\label{t4}
If $X$, $Y \in \VF(M)$, $\q \in Q$, then
\begin{equation}\label{S}
[\lr(X), \lr(Y)] \onq = \lr ([X,Y]) \onq + \nu (A R (X,Y) - \hR (AX,AY)A)\onq.
\end{equation}
\end{theorem}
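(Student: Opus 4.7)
The strategy is to derive \eqref{S} as a direct specialization of Proposition \ref{p4}(1) by recognizing the rolling lifts as no-spinning lifts of particular sections. For $q=(x,\hx;A)\in Q$, set
\[
\tbar(q):=(X(x),\,AX(x)),\qquad \sbar(q):=(Y(x),\,AY(x)).
\]
These belong to $C^{\infty}(\pq,\pi_{T\overline{M}})$, satisfy $\lns(\tbar(q))\onq=\lr(X)\onq$ and $\lns(\sbar(q))\onq=\lr(Y)\onq$ by definition \eqref{C}, and are tangent to $Q$ (since the rolling lifts are). Proposition \ref{p4}(1) therefore yields
\[
[\lr(X),\lr(Y)]\onq=\lns\!\bigl(\lns(\tbar(q))\onq\sbar-\lns(\sbar(q))\onq\tbar\bigr)\onq+\nu\bigl(AR(X,Y)-\hR(AX,AY)A\bigr)\onq,
\]
whose vertical part already matches \eqref{S} exactly. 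The remaining task is to verify the identity
\[
\lns(\tbar(q))\onq\sbar-\lns(\sbar(q))\onq\tbar=\bigl([X,Y](x),\,A[X,Y](x)\bigr),
\]
because then, again by \eqref{C}, the first summand collapses to $\lr([X,Y])\onq$.

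For the derivative on the left, pick a local section $\tilde{A}$ of $\pi_{T^*M\otimes T\hM}$ with $\tilde{A}(x,\hx)=A$ and apply the intrinsic formula introduced immediately before Proposition \ref{p4},
\[
\lns(\tbar(q))\onq\sbar=\nablabar_{(X,AX)}\bigl(\sbar\circ\tilde{A}\bigr)-\nu\bigl(\nablabar_{(X,AX)}\tilde{A}\bigr)\onq\sbar.
\]
Since $(\sbar\circ\tilde{A})(x',\hx')=\bigl(Y(x'),\tilde{A}(x',\hx')Y(x')\bigr)$, the product connection $\nablabar=\nabla\oplus\hnabla$ produces $\nabla_X Y$ on the $M$-factor and, by the Leibniz rule, $(\nablabar_{(X,AX)}\tilde{A})Y+A\nabla_X Y$ on the $\hM$-factor. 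The map $A'\mapsto\sbar(x,\hx;A')=(Y(x),A'Y(x))$ is affine with $M$-component independent of $A'$, so the vertical correction produces precisely $\bigl(0,(\nablabar_{(X,AX)}\tilde{A})Y\bigr)$, which cancels the extraneous tensorial term. This gives $\lns(\tbar(q))\onq\sbar=(\nabla_X Y,A\nabla_X Y)$, and the symmetric calculation yields $\lns(\sbar(q))\onq\tbar=(\nabla_Y X,A\nabla_Y X)$. Subtracting and using the torsion-freeness of $\nabla$ produces the required pair $([X,Y](x),A[X,Y](x))$.

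\textbf{Main difficulty.} Everything here is structural except for the cancellation in the previous paragraph, where one has to keep careful track of the vertical correction: the term $(\nablabar_{(X,AX)}\tilde{A})Y$ generated by differentiating $\tilde{A}Y$ along the base must be matched exactly by $\nu(\nablabar_{(X,AX)}\tilde{A})\onq\sbar$. This matching is precisely the statement that the $\lns$-derivative is independent of the chosen extension $\tilde{A}$, and it relies on the fact that $\sbar$ is affine in the fiber variable with fiber-independent first component. Once this cancellation is verified, the rolling bracket formula \eqref{S} falls out immediately from Proposition \ref{p4}(1).
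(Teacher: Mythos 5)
Your proposal is correct and follows essentially the same route as the paper, which derives Theorem~\ref{t4} as a specialization of the general bracket formula Proposition~\ref{p4}(1) to the sections $\tbar(q)=(X(x),AX(x))$ and $\sbar(q)=(Y(x),AY(x))$; the cancellation you verify—that the vertical correction $\nu(\nablabar_{(X,AX)}\tilde{A})|_q\,\sbar=(0,(\nablabar_{(X,AX)}\tilde{A})Y)$ exactly removes the non-tensorial term in $\nablabar_{(X,AX)}(\sbar\circ\tilde{A})$—is the key computational point, and your use of torsion-freeness to collapse $\nabla_XY-\nabla_YX$ to $[X,Y]$ completes the argument as intended.
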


\begin{definition}
For $q=(x,\hat{x};A)\in Q$, we define the rolling curvature $\Rol_q$ at $q$ by
\[
\Rol_q(X,Y):= AR(X,Y ) - \hR (AX,AY) A,\quad X,Y\in T_xM.
\]
If $X,Y\in\VF(M)$, we write $\Rol(X,Y)$ for the map $Q\to \tsmthm$; $q\mapsto \Rol_q(X,Y)$.

Similarly, for $k\geq 0$, we define the $k$-th covariant derivative of $\Rol$ at $q$ by
\[
(\nablabar^{k} \Rol)_q(X,Y,Z_1,...,Z_{k}) := A(\nabla^{k} R)(X,Y,(.),Z_1,...,Z_{k}) - (\hnabla^{k} \hR)(AX,AY,A(.),AZ_1,...,AZ_{k}).
\]
\end{definition}

Clearly, for all $(x,\hx;A) \in Q$,
$$
A^{T} \Rol_q(X,Y), \; A^{T} (\nablabar^{k} \Rol)_q(X,Y,Z_1,...,Z_{k}) \in \mathfrak{so} (T_{x} M) \; if \; n \leq \hn,
$$
and
$$
\Rol_q(X,Y) A^{T}, \; (\nablabar^{k} \Rol)_q(X,Y,Z_1,...,Z_{k}) A^{T} \in \mathfrak{so} (T_{\hx} \hM)  \; if \; n \geq \hn,
$$
and therefore,
$\nu(\Rol_q(X,Y))$, $(\nablabar^{k} \Rol)_q(X,Y,Z_1,...,Z_{k})$ are well defined as elements of $V \onq (\pq)$.

\begin{remark}
With this notation, Eq. (\ref{S}) can be written as
\begin{equation}
[\lr(X), \lr(Y)] \onq = \lr ([X,Y]) \onq + \nu (\Rol_q(X,Y))\onq.
\end{equation}
\end{remark}

\begin{proposition}
Let $X$, $Y$, $Z \in \VF(M)$. Then, for $\q \in \tsmthm $, one has
\begin{align*}
[ \lr (Z), \nu (\Rol (X,Y))] \onq = & - \lns (\Rol (X,Y)Z) \onq + \nu \big( (\overline{\nabla}^1 \Rol)_q (X,Y,Z) \big) \onq \\
 & + \nu \big( \Rol_q(\nabla_Z X, Y)\big) \onq + \nu \big( \Rol_q(X, \nabla_Z Y) \big) \onq.
\end{align*}
\end{proposition}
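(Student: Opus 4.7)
The plan is to reduce the statement to Proposition \ref{p4}(2) by writing $\lr(Z) = \lns(\tbar)$ with $\tbar(q) := (Z|_x, AZ|_x)$, and then to identify the two derivatives produced by that formula with the terms on the right-hand side of the stated identity. Setting $U(q) := \Rol_q(X,Y)$, Proposition \ref{p4}(2) gives
\begin{equation*}
[\lr(Z), \nu(\Rol(X,Y))]|_q = -\lns\!\big(\nu(\Rol_q(X,Y))|_q \tbar\big)|_q + \nu\!\big(\lr(Z)|_q \Rol(X,Y)\big)|_q,
\end{equation*}
so the proof reduces to computing (i) the vertical derivative $\nu(B)|_q \tbar$ for $B = \Rol_q(X,Y)$, and (ii) the ``horizontal'' derivative $\lr(Z)|_q \Rol(X,Y)$.

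For (i), since the first slot of $\tbar$ is independent of $A$ and the second depends linearly on $A$, the vertical derivative is $\nu(B)|_q \tbar = (0, B Z|_x) \in T_x M \oplus T_{\hx}\hM$. Using the standard convention $\lns(\hat V) := \lns(0,\hat V)$ for $\hat V \in T\hM$, this produces exactly the first term $-\lns(\Rol(X,Y)Z)|_q$ of the statement.

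For (ii), I would choose smooth curves $\gamma$, $\hgamma$ through $x$, $\hx$ with velocities $Z|_x$, $AZ|_x$, and let $\tilde A(t) := P_0^t(\hgamma) \circ A \circ P_t^0(\gamma)$ be the horizontal lift, so that $q(t) := (\gamma(t),\hgamma(t);\tilde A(t))$ is the integral curve of $\lr(Z)$ through $q$. Then I would evaluate the linear map $\Rol_{q(t)}(X,Y)$ on the parallel vector field $W(t) := P_0^t(\gamma) W_0$ (for arbitrary $W_0 \in T_xM$) and differentiate the resulting $\hM$-valued curve with $\hnabla_{AZ}$ at $t=0$. The key horizontal-lift identity $\hnabla_{AZ}[\tilde A(\cdot) V(\cdot)]|_0 = A \nabla_Z V$, valid for any vector field $V$ along $\gamma$, specialises to $\hnabla_{AZ}[\tilde A W]|_0 = 0$ (because $W$ is parallel) and to $\hnabla_{AZ}[\tilde A X|_\gamma]|_0 = A\nabla_Z X$, $\hnabla_{AZ}[\tilde A Y|_\gamma]|_0 = A\nabla_Z Y$. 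Applying the Leibniz rule to each of $\tilde A R(X,Y)|_\gamma W$ and $\hR(\tilde A X|_\gamma, \tilde A Y|_\gamma) \tilde A W$ and collecting, the surviving terms assemble into
\[
(\nablabar^1 \Rol)_q(X,Y,Z) W_0 + \Rol_q(\nabla_Z X, Y) W_0 + \Rol_q(X, \nabla_Z Y) W_0,
\]
the first summand coming from $A(\nabla_Z R)(X,Y)W_0 - (\hnabla_{AZ}\hR)(AX,AY)AW_0$ by the very definition of $\nablabar^1 \Rol$, and the other two summands from the $\nabla_Z X$ and $\nabla_Z Y$ contributions. Since $W_0$ is arbitrary this identifies $\lr(Z)|_q \Rol(X,Y)$, and substituting into the bracket formula yields the claim.

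The main obstacle is the bookkeeping in step (ii): one must carefully distribute $\hnabla_{AZ}$ over the three vector-field arguments of $\hR$ and the subsequent composition with $\tilde A$, and verify that the horizontal-lift property cancels every term in which $\tilde A$ itself would have to be differentiated, leaving only the covariant derivatives of $R$, $\hR$, $X$ and $Y$. Once this cancellation is in place, matching the surviving terms against the definitions of $\Rol$ and $\nablabar^1\Rol$ is routine, and the rest of the argument is a direct application of Proposition \ref{p4}(2).
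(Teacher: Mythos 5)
Your proof is correct and follows the standard route the paper intends: reduce to Proposition \ref{p4}(2) with $\tbar(q)=(Z|_x,AZ|_x)$ and $U(q)=\Rol_q(X,Y)$, identify the vertical derivative $\nu(\Rol_q(X,Y))|_q\tbar=(0,\Rol_q(X,Y)Z)$ with the $-\lns(\Rol(X,Y)Z)|_q$ term, and compute $\lr(Z)|_q\Rol(X,Y)$ by $\nablabar$-differentiating along the rolling integral curve (where $\tilde A(t)$ is parallel), which by the Leibniz rule distributes into $(\nablabar^1\Rol)_q(X,Y,Z)+\Rol_q(\nabla_ZX,Y)+\Rol_q(X,\nabla_ZY)$. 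This is essentially the same argument as the paper's (the proposition in the paper is stated without an explicit in-text proof, but the surrounding text makes clear that it is derived exactly from Proposition \ref{p4}(2) and the definitions of $\Rol$ and $\nablabar^1\Rol$, as in Proposition 5.17 of the cited reference).
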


We recall the following notation 
we define
$$
[A,B]_{\mathfrak{so}} := A \circ B - B \circ A \in \mathfrak{so}(T_{x} M).
$$

\begin{proposition}
Let $\q \in Q$ and $X$, $Y$, $Z$, $W \in \VF (M)$. We have
\begin{align*}
 & \big[\nu (\Rol(X,Y)), \nu (\Rol(Z,W))\big] \onq\\
= &  \nu \Big(A [R(X,Y),R(Z,W)]_{\mathfrak{so}} - [\hR ( AX,AY), \hR( AZ, A W)]_{\mathfrak{so}} A -  \hR ( \Rol_q (X,Y) Z,AW) (A)\\
&  - \hR ( AZ, \Rol_q (X,Y) W) (A)  +  \hR ( A X, \Rol_q (Z,W) Y) (A) +  \hR (\Rol_q (Z,W) X, AY)\Big)\onq.
\end{align*}
\end{proposition}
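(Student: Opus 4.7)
The plan is to apply item~3 of Proposition~\ref{p4} with $U(q)=\Rol_q(X,Y)$ and $V(q)=\Rol_q(Z,W)$. Both belong to $C^{\infty}(\pq,\pi_{\tsmthm})$, and the remark preceding this proposition, together with Proposition~\ref{p9}, ensures that $\nu(U(q'))$ and $\nu(V(q'))$ are tangent to $Q$ at every $q'\in Q$. The cited formula then reduces the bracket to
\[
\big[\nu(\Rol(X,Y)),\nu(\Rol(Z,W))\big]\onq
=\nu\big(\nu(U(q))\onq V-\nu(V(q))\onq U\big)\onq,
\]
so the problem becomes one of computing two vertical derivatives and antisymmetrizing.

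Next, I would compute $\nu(U)\onq \Rol(Z,W)$ by exploiting the fact that $q'=(x,\hx;A')\mapsto \Rol_{q'}(Z,W)=A'R(Z,W)-\hR(A'Z,A'W)A'$ is polynomial of degree three in $A'$. Replacing $A'$ by $A+tU$ with $U\in\tsxmthxm$ and differentiating at $t=0$, the product rule and the trilinearity of $\hR(\cdot,\cdot)\cdot$ give
\begin{align*}
\nu(U)\onq \Rol(Z,W)=UR(Z,W)-\hR(UZ,AW)A-\hR(AZ,UW)A-\hR(AZ,AW)U,
\end{align*}
with the analogous formula for $\nu(V)\onq \Rol(X,Y)$ obtained by swapping $(X,Y)\leftrightarrow(Z,W)$.

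The rest is substitution and bookkeeping: in the difference $\nu(U)\onq \Rol(Z,W)-\nu(V)\onq \Rol(X,Y)$ I substitute the explicit values $U=AR(X,Y)-\hR(AX,AY)A$ and $V=AR(Z,W)-\hR(AZ,AW)A$ everywhere that $U$ or $V$ has not been packaged as $\Rol_q(\cdot,\cdot)(\cdot)$. The piece $UR(Z,W)-VR(X,Y)$ produces $A[R(X,Y),R(Z,W)]_{\mathfrak{so}}$ plus two cross products $-\hR(AX,AY)AR(Z,W)+\hR(AZ,AW)AR(X,Y)$, while the piece $\hR(AX,AY)V-\hR(AZ,AW)U$ produces $-[\hR(AX,AY),\hR(AZ,AW)]_{\mathfrak{so}}A$ together with the opposite cross products, so these four terms cancel pairwise. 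The four remaining contributions $-\hR(UZ,AW)A-\hR(AZ,UW)A+\hR(VX,AY)A+\hR(AX,VY)A$ are already in the required form via the identifications $UZ=\Rol_q(X,Y)Z$, $UW=\Rol_q(X,Y)W$, $VX=\Rol_q(Z,W)X$ and $VY=\Rol_q(Z,W)Y$, and match the right-hand side of the statement.

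The argument presents no conceptual obstacle; the only delicate point is careful bookkeeping, since the curvature operators do not commute and one must track the order of composition in every term to verify that the four mixed products $\hR(A\cdot,A\cdot)\circ AR(\cdot,\cdot)$ cancel correctly and that the eight surviving terms regroup as stated.
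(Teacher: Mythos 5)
Your proposal is correct and takes the same approach as the paper's own (which is delegated to the cited reference): apply item~3 of Proposition~\ref{p4} with $U(q)=\Rol_q(X,Y)$, $V(q)=\Rol_q(Z,W)$, compute the vertical derivatives by differentiating the polynomial expression $A'\mapsto A'R(Z,W)-\hR(A'Z,A'W)A'$ at $A$ in direction $U$ (and symmetrically), and then substitute and cancel. Your explicit cancellation of the four mixed products $\hR(A\cdot,A\cdot)\circ AR(\cdot,\cdot)$ and the final regrouping into the stated six terms are accurate; the statement as printed appears to have a typo (the final $\hR(\Rol_q(Z,W)X,AY)$ is missing its trailing $(A)$), and your derivation recovers the intended version.
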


\begin{proof}
Cf. the proof of Proposition 5.18 and Corollary 5.19 of Section 5 in \cite{ChitourKokkonen}.
\end{proof}

\begin{proposition}
Consider the following smooth right and left actions of $Iso (M,g)$ and $Iso (\hM, \hg)$ on $Q$ given by
$$
q_0 \cdot F := (F^{-1} (x_0), \hx_0; A_0 \circ F_{*}|_{F^{-1} (x_0)}), \quad  \hF \cdot q_0:= (x_0, \hF (\hx_0); \hF_{*}|_{\hx_0} \circ A_0),
$$
where $\qz \in Q$, $F \in Iso (M,g)$ and $\hF \in Iso (\hM, \hg)$. We also set
\[
\hF \cdot q_0 \cdot F :=(\hF \cdot q_0) \cdot F = \hF \cdot (q_0 \cdot F).
\]
Then for any $\qz \in Q$, absolutely continuous $\gamma: [0,1] \rarrow M$ such that $\gamma (0) = x_0$, $F \in Iso (M,g)$ and $\hF \in Iso (\hM, \hg)$, we have
\begin{align}\label{DN}
\hF \cdot q_{\dr} (\gamma, q_0)(t) \cdot F = q_{\dr} (F^{-1} \circ \gamma, \hF \cdot q_0 \cdot F) (t),
\end{align}
for all $t \in [0,1]$. In particular, $\hF \cdot \odr ( q_0) \cdot F = \odr (\hF \cdot q_0 \cdot F)$.
\end{proposition}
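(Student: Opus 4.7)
The plan is to invoke the uniqueness of rolling curves (Proposition \ref{p3}(i,iii)): both sides of \eqref{DN} are curves in $Q$ starting at $\hF\cdot q_0\cdot F$ whose $M$-projection is $F^{-1}\circ\gamma$. It then suffices to verify that the left-hand side is tangent to $\dr$ a.e., which by Proposition \ref{p2}.3 together with the no-slipping condition \eqref{CN} amounts to checking the two rolling conditions for
\[
q_{\text{new}}(t):=\hF\cdot q_\dr(\gamma,q_0)(t)\cdot F=\bigl(F^{-1}(\gamma(t)),\,\hF(\hgamma(t));\,\hF_{*}|_{\hgamma(t)}\circ A(t)\circ F_{*}|_{F^{-1}(\gamma(t))}\bigr),
\]
where I write $q_\dr(\gamma,q_0)(t)=(\gamma(t),\hgamma(t);A(t))$.

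For the no-slipping condition, I would simply chain-rule the projections: $\dot{\gamma}_{\text{new}}(t)=F^{-1}_{*}\dot\gamma(t)$ and $\dot{\hgamma}_{\text{new}}(t)=\hF_{*}\dot{\hgamma}(t)=\hF_{*}A(t)\dot\gamma(t)$ (using \eqref{CN} for $A(t)$), so
\[
A_{\text{new}}(t)\dot{\gamma}_{\text{new}}(t)=\hF_{*}\circ A(t)\circ F_{*}\circ F^{-1}_{*}\dot\gamma(t)=\hF_{*}A(t)\dot\gamma(t)=\dot{\hgamma}_{\text{new}}(t).
\]

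The no-spinning condition is where the isometry hypothesis really bites, but it is encoded compactly in formula \eqref{DS}. Using Proposition \ref{p1} we know $A(t)=P_{0}^{t}(\hgamma)\circ A_{0}\circ P_{t}^{0}(\gamma)$. Applying \eqref{DS} to $\hF$ along $\hgamma$ and to $F^{-1}$ along $\gamma$ (the latter yielding $P_{t}^{0}(F^{-1}\circ\gamma)=F^{-1}_{*}\circ P_{t}^{0}(\gamma)\circ F_{*}$) one computes
\[
A_{\text{new}}(t)=\hF_{*}\!\circ\! P_{0}^{t}(\hgamma)\!\circ\! A_{0}\!\circ\! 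P_{t}^{0}(\gamma)\!\circ\! F_{*}
=P_{0}^{t}(\hF\!\circ\!\hgamma)\!\circ\!\bigl(\hF_{*}\!\circ\! A_{0}\!\circ\! F_{*}\bigr)\!\circ\! P_{t}^{0}(F^{-1}\!\circ\!\gamma).
\]
This is precisely the parallel-transport formula of Proposition \ref{p1} for the rolling curve starting at $\hF\cdot q_{0}\cdot F$ along $F^{-1}\circ\gamma$, so Proposition \ref{p1} (second implication) shows $A_{\text{new}}(t)\in Q$ and the no-spinning condition \eqref{CM} holds. By the uniqueness in Proposition \ref{p3}(i,iii), $q_{\text{new}}(t)=q_{\dr}(F^{-1}\circ\gamma,\hF\cdot q_{0}\cdot F)(t)$, which is \eqref{DN}. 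The final assertion $\hF\cdot\odr(q_{0})\cdot F=\odr(\hF\cdot q_{0}\cdot F)$ is then immediate from the characterization of orbits as endpoints of rolling curves.

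The only mildly delicate point is bookkeeping of base points in the identity \eqref{DS}: one must keep track of whether $F_{*}$ is taken at $\gamma(t)$ or at $F^{-1}(\gamma(t))$, and similarly for $\hF_{*}$ at $\hgamma(t)$ versus $\hF(\hgamma(t))$. Beyond that, the argument is completely formal, essentially the naturality of parallel transport under isometries combined with the uniqueness of solutions to the rolling ODE.
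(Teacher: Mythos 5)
Your proof is correct and is the natural argument that the paper's reference (``Cf.\ the proof of Proposition 5.5 of Section 5 in \cite{ChitourKokkonen}'') is pointing to: write out $q_{\text{new}}$ explicitly, check no-slip directly, use \eqref{DS} plus the parallel-transport characterization to check no-spin, and conclude by the uniqueness in Proposition \ref{p3}. One small imprecision: the converse direction you need (parallel-transport formula $\Rightarrow$ \eqref{CM}) is not literally what Proposition \ref{p1} asserts — that proposition states the forward implication — but it is established in the course of the proof of its first claim (where $B(t)=P_0^t(\hgamma)\circ A(0)\circ P_t^0(\gamma)$ is shown to satisfy $\nablabar B=0$), so the step is sound even if the citation should really be to the proof rather than the statement.
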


\begin{proof}
Cf. the proof of Proposition 5.5 of Section 5 in \cite{ChitourKokkonen}.
\end{proof}

\begin{remark}
When $n \leq \hn$, the right action of $Iso (M,g)$ on $Q$ is free. Indeed, given $F$, $F' \in Iso (M,g)$, the existence of an $\q \in Q$ such that $ q \cdot F = q \cdot F' $ implies that $F^{-1} (x) = F'^{-1} (x): = y $ and $A \circ F_{*} |_{y} = A \circ {F'}_{*} |_{y}$. Since $A^{T} A = \id$, we obtain $F_{*} |_{y} = {F'}_{*} |_{y}$,
which implies, because $M$ is connected,
that $F = F'$ (see \cite{Sakai}, page 43). The same argument proves the freeness of the left $Iso (\hM, \hg)$-action when $n \geq \hn$.
\end{remark}

\subsection{Elementary Constructions when $\mid n - \hn \mid = 1$}

\begin{proposition}\label{p15}
Let $(M,g)$ and $(\hM,\hg)$ be Riemannian manifolds of dimensions $n$ and $\hn = n-1$ respectively, with $n \geq 2$. We use $(\hM^{(1)},\hg^{(1)})$ to denote the Riemannian product
$(\mathbb{R} \times \hM, dr^2 \oplus \hg)$,
where $dr^2$ denotes the canonical Riemannian metric on $\mathbb{R}$.

Set $Q^{(1)} := Q(M,\hM^{(1)})$ and let $\lr^{(1)}$, $\dr^{(1)}$ to be the rolling lift and the rolling distribution on $Q^{(1)}$. We define, for every $a \in \mathbb{R}$,
$$
\iota_{a} : Q \rarrow Q^{(1)}; \;\;\; \iota_{a} (x,\hx;A)= (x, (a,\hx); A^{(1)}),
$$
where $A^{(1)}: T_{x} M \rarrow T_{(a,\hx)} (\mathbb{R} \times \hM)$ is defined as follows: $A^{(1)} \in Q^{(1)}$,
$$
A^{(1)} |_{(\ker A)^{\perp}} = (0, A |_{(\ker A)^{\perp}}), \quad A^{(1)} (\ker A) = \mathbb{R}
\partial_r |_{(a,\hx)} \times \{0\},
$$
where $\partial_r$
is the canonical vector field on $\mathbb{R}$ in the positive direction, also seen as a vector field on $\hM^{(1)}$ in the usual way.\\
Then for every $a \in \mathbb{R}$, the map $\iota_{a}$ is an embedding and for every $\qz \in Q$, $a_0 \in \mathbb{R}$ and $X \in T_{x} M$, one has
$$
\lr (X) \onqz = \Pi_{*} \lr^{(1)} (X) |_{\iota_{a_0} (q_0)},
$$
$$
\odr(q_0) = \Pi (\Oh_{\dr^{(1)}} (\iota_{a_0} (q_0))),
$$
where
$$
\begin{array}{lccc}
\Pi: & Q^{(1)} & \rarrow & Q;\\
& (x,(a,\hx);A^{(1)}) & \mapsto & (x,\hx; (pr_2)_{*} \circ A^{(1)}),
\end{array}
$$
is a surjective submersion and $pr_2 : \mathbb{R} \times \hM \rarrow \hM$ is the projection onto the second factor.
\end{proposition}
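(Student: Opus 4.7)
The plan is to reduce the proposition to a single key claim about rolling curves and then unpack its consequences.

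First I would verify that $\iota_a$ and $\Pi$ are well defined and that $\Pi\circ\iota_a=\id_Q$. When $\hn=n-1$, for every $q=(x,\hx;A)\in Q$ the subspace $\ker A\subset T_xM$ is one dimensional and $A$ restricts to an isometry from $(\ker A)^\perp$ onto $T_{\hx}\hM$; with a smooth orientation choice on $\ker A$ induced from the ambient orientations of $T_xM$ and $T_{\hx}\hM$, $A^{(1)}$ extends $A$ to a genuine isometry $T_xM\to T_{(a,\hx)}\hM^{(1)}$, hence lies in $Q^{(1)}$. Conversely, for $(x,(a,\hx);A^{(1)})\in Q^{(1)}$ the map $(pr_2)_*\circ A^{(1)}$ has one dimensional kernel and is an isometry on its orthogonal complement onto $T_{\hx}\hM$, so belongs to $Q$. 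The identity $\Pi\circ\iota_a=\id_Q$ is immediate; it forces $\iota_a$ to be an injective immersion, with image $\iota_a(Q)$ cut out by fixing the $\mathbb{R}$-factor to $a$ together with a locally constant sign, hence an embedded submanifold. Differentiating the same identity shows $\Pi_*$ is pointwise surjective along $\iota_a(Q)$, and together with a constant rank count ($\dim Q^{(1)}=\dim Q+1$) this implies $\Pi$ is a submersion on all of $Q^{(1)}$.

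The central claim is the following: for any absolutely continuous $\gamma:[0,T]\to M$ with $\gamma(0)=x_0$, the rolling curve $q^{(1)}(t)=(\gamma(t),(a(t),\hgamma_2(t));A^{(1)}(t)):=q_{\dr^{(1)}}(\gamma,\iota_{a_0}(q_0))(t)$ in $Q^{(1)}$ projects via $\Pi$ to the rolling curve $q_{\dr}(\gamma,q_0)(t)$ in $Q$. Setting $B(t):=(pr_2)_*\circ A^{(1)}(t)$, the no-slipping condition $\dot{\hgamma}^{(1)}(t)=A^{(1)}(t)\dot{\gamma}(t)$ in $Q^{(1)}$ projects to $\dot{\hgamma}_2(t)=B(t)\dot{\gamma}(t)$, which is no-slipping in $Q$. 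For no-spinning, I exploit the Riemannian product structure of $\hM^{(1)}=\mathbb{R}\times\hM$: the $\hnabla^{(1)}$-parallel transport of $f\partial_r+W$ along $(a(t),\hgamma_2(t))$ keeps $f$ constant and transports $W$ along $\hgamma_2$ by $\hnabla$. Applied to $A^{(1)}(t)X(t)$ with $X$ parallel along $\gamma$, the hypothesis $\nablabar^{(1)}_{(\dot{\gamma},\dot{\hgamma}^{(1)})}A^{(1)}=0$ forces the $T\hM$-component $B(t)X(t)$ to be $\hnabla$-parallel along $\hgamma_2$, which is exactly $\nablabar_{(\dot{\gamma},\dot{\hgamma}_2)}B=0$. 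With the matching initial data $B(0)=A_0$ and $\hgamma_2(0)=\hx_0$, Proposition \ref{p3} identifies $(\gamma,\hgamma_2;B)$ with $q_{\dr}(\gamma,q_0)$; since $\hM^{(1)}$ is complete as the product of complete manifolds, both curves are defined on all of $[0,T]$.

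The two stated conclusions now follow easily. Differentiating $\Pi(q^{(1)}(t))=q_{\dr}(\gamma,q_0)(t)$ at $t=0$ with $X=\dot{\gamma}(0)$ yields $\Pi_*\lr^{(1)}(X)|_{\iota_{a_0}(q_0)}=\lr(X)|_{q_0}$. For the orbit identity, combine the central claim with Proposition \ref{p3}(iii) and the piecewise-smooth Orbit Theorem: every element of $\odr(q_0)$ has the form $q_{\dr}(\gamma,q_0)(1)$ for some piecewise smooth $\gamma$ starting at $x_0$, which lifts via the central claim to a $\dr^{(1)}$-admissible curve from $\iota_{a_0}(q_0)$ whose $\Pi$-image is precisely $q_{\dr}(\gamma,q_0)(1)$; conversely, each element of $\Oh_{\dr^{(1)}}(\iota_{a_0}(q_0))$ equals $q_{\dr^{(1)}}(\gamma,\iota_{a_0}(q_0))(1)$ for some such $\gamma$, and its $\Pi$-image is $q_{\dr}(\gamma,q_0)(1)\in\odr(q_0)$. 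The only genuinely non-formal step in the whole argument is the descent of the no-spinning equation from $Q^{(1)}$ to $Q$, which rests entirely on the product Riemannian structure of $\hM^{(1)}$; everything else is ODE uniqueness and bookkeeping.
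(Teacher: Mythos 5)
Your proof is correct, and the conclusions follow. The crux in both your argument and the paper's is the identity $\Pi\big(q_{\dr^{(1)}}(\gamma,\iota_{a_0}(q_0))(t)\big)=q_{\dr}(\gamma,q_0)(t)$ for every base curve $\gamma$ in $M$, but you establish it in the opposite direction. The paper constructs the $Q^{(1)}$-curve explicitly (guessing the $\mathbb{R}$-component as an integral $a_0+\int_0^t\iota_{a_0}(A_0)p^T(A_0)P_s^0(\gamma)\dot{\gamma}(s)\,ds$, building $A^{(1)}(t)$ by parallel transport, and then verifying no-slip), whereas you instead take the rolling curve in $Q^{(1)}$ (whose existence Proposition \ref{p3} already guarantees) and descend: you observe that no-slip is preserved under $(pr_2)_*$ and that the product structure of $\hM^{(1)}$ makes the $T\hM$-component of a $\hnabla^{(1)}$-parallel field $\hnabla$-parallel, so $\Pi\circ q^{(1)}$ satisfies both rolling constraints and ODE uniqueness identifies it with $q_{\dr}(\gamma,q_0)$. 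This is cleaner: it sidesteps the explicit integral formula entirely, with the trade-off that you implicitly appeal to completeness of $\hM^{(1)}$ (to have the curve on $[0,T]$), which the paper's explicit construction provides directly.

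One small point to tighten: your argument that $\Pi$ is a submersion uses $\Pi\circ\iota_a=\id_Q$, which only gives surjectivity of $\Pi_*$ at points of $\iota_a(Q)$. Since $\dim M=\dim\hM^{(1)}$, the space $Q^{(1)}$ may have two connected components (both manifolds oriented), and $\iota_a$ for fixed sign convention lands in only one of them; a point $q^{(1)}=(x,(a,\hx);A^{(1)})$ may equal $\iota_a^-(\Pi(q^{(1)}))$ rather than $\iota_a(\Pi(q^{(1)}))$, where $\iota_a^-$ flips the sign on $\ker A$. The fix is trivial — use whichever of the two local sections passes through $q^{(1)}$ — but as stated the phrase \emph{constant rank count} glosses over this.
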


\begin{proof}
Let $\gamma$ be a path in $M$ starting at $x_0$ and $q(t) = (\gamma (t), \hgamma (t); A(t)) : = q_{\dr} (\gamma, q_0) (t)$. We define a path $q^{(1)} (t) = (\gamma (t), \hgamma^{(1)} (t) ; A^{(1)} (t)) $ on $Q^{(1)}$ as follows:
$$
\hgamma^{(1)} (t) := \big(a_0 + \int_0^{t} \iota_{a_0} (A_0) p^{T} (A_0) P_{s}^0 (\gamma) \dot{\gamma} (s) ds, \hgamma (t)\big), \quad A^{(1)} := P_0^{t} (\hgamma^{(1)}) \circ \iota_{a_0} (A_0) \circ P_{t}^0 (\gamma),
$$
where, for every $\q \in Q$, we define the $g$-orthogonal projections as
$$
p^{\perp} (A) : T_{x} M \rarrow (\ker A)^{\perp}, \quad p^{T} (A) : T_{x} M \rarrow \ker A.
$$
We will show that $q^{(1)}$ is the rolling curve on $Q^{(1)}$ starting from $\iota_{a_0} (q_0)$. Indeed, clearly $q^{(1)} (0) = (\gamma (0), (a_0,\hgamma (0)); \iota_{a_0} (A_0)) = \iota_{a_0} (q_0)$ and $A^{(1)} (t) \in Q^{(1)}$ for every time $t$ and $\iota_{a_0} (A_0) \in Q^{(1)}$. We also have
$$
\dot{\hgamma}^{(1)} (t) = (b (t) \partial_r |_{\hgamma^{(1)} (t)} , \dot{\hgamma} (t)),
$$
where $ b(t)$ is defined by $\iota_{a_0} (A_0) p^{T} (A_0) P_{t}^0 (\gamma) \dot{\gamma} (t) := (b (t) \partial_r |_{(a_0,\hx_0)} , 0)$. On the other hand,
$$
\begin{array}{rl}
A^{(1)} (t) \dot{\gamma} (t) & = P_0^{t} (\hgamma^{(1)}) \iota_{a_0} (A_0) P_{t}^0 (\gamma) \dot{\gamma} (t)\\
& = P_0^{t} (\hgamma^{(1)}) \iota_{a_0} (A_0) (p^{T} (A_0) + p^{\perp} (A_0)) P_{t}^0 (\gamma) \dot{\gamma} (t).
\end{array}
$$
Since $\hM^{(1)}$ is a Riemannian product, then, for every $\hX  \in T_{\hx_0} \hM\subset T_{(a_0,\hx_0)} (\mathbb{R} \times \hM)$, we have
\begin{align*}
P_0^{t} (\hgamma^{(1)}) (0, \hX ) = (0, P_0^{t} (\hgamma) \hX), \quad P_0^{t} (\hgamma^{(1)}) ( \partial_r |_{(a_0,\hx_0)} , 0 )= (\partial_r |_{\hgamma^{(1)} (t)} , 0).
\end{align*}
However $\iota_{a_0} (A_0)  p^{\perp} (A_0) X = (0, A_0 p^{\perp} (A_0) X ) = (0, A_0 X)$ for every $X \in T_{x_0} M$, we get that
$$
P_0^{t} (\hgamma^{(1)}) \iota_{a_0} (A_0) p^{\perp} (A_0) P_{t}^0 (\gamma) \dot{\gamma} (t) = P_0^{t} (\hgamma^{(1)}) ( 0 , A_0 P_{t}^0 (\gamma) \dot{\gamma} (t) ) = (0, P_0^{t} (\hgamma) A_0 P_{t}^0 (\gamma) \dot{\gamma} (t)),
$$
and $P_0^{t} (\hgamma^{(1)}) \iota_{a_0} (A_0) p^{T} (A_0)  P_{t}^0 (\gamma) \dot{\gamma} (t) = P_0^{t} (\hgamma^{(1)}) (b (t) \partial_r |_{(a_0,\hx_0)} , 0) = (b (t) \partial_r |_{\hgamma^{(1)} (t)} , 0)$.
Therefore,
$$
\begin{array}{rl}
A^{(1)} (t) \dot{\gamma} (t) & = P_0^{t} (\hgamma^{(1)}) \iota_{a_0} (A_0) (p^{T} (A_0) + p^{\perp} (A_0)) P_{t}^0 (\gamma) \dot{\gamma} (t)\\
& =  ( b(t) \partial_r |_{\hgamma^{(1)} (t)},0) + (0, P_0^{t} (\hgamma) A_0 P_{t}^0 (\gamma) \dot{\gamma} (t))\\
& =  ( b(t) \partial_r |_{\hgamma^{(1)} (t)}, A (t) \dot{\gamma} (t))\\
& =  ( b(t) \partial_r |_{\hgamma^{(1)} (t)}, \dot{\hgamma} (t)) = \dot{\hgamma}^{(1)} (t).\\
\end{array}
$$
This and the definition of $A^{(1)} (t)$ show that $q^{(1)} (t) = q_{\dr^{(1)}} (\gamma, \iota_{a_0} (q_0)) (t)$ for all $t$.\\
Furthermore, since $A^{(1)} (t) \dot{\gamma} (t) =  P_0^{t} (\hgamma^{(1)}) \iota_{a_0} (A_0) (p^{T} (A_0) + p^{\perp} (A_0)) P_{t}^0 (\gamma) \dot{\gamma} (t)$ and by the basic properties of parallel transport, it follows that
$$
\begin{array}{rl}
\Pi (q_{\dr^{(1)}} (\gamma, \iota_{a_0} (q_0)) (t))  = \Pi(\gamma (t), \hgamma^{(1)}; A^{(1)}) & = (\gamma (t), \hgamma (t); (pr_2)_{*} \circ A^{(1)})\\
& = (\gamma (t), \hgamma (t); (pr_2)_{*} ( P_0^{t} (\hgamma^{(1)}) \circ \iota_{a_0} (A_0) \circ P_{t}^0 (\gamma)))\\
& = (\gamma (t), \hgamma (t); P_0^{t} (\hgamma) A_0 P_{t}^0 (\gamma)) = q_{\dr} (\gamma, q_0) (t).
\end{array}
$$
Hence $\odr (q_0) \subset \Pi (\Oh_{\dr^{(1)}} (\iota_{a_0} (q_0)))$ as well as
$$
\Pi_{*} (\lr^{(1)} (\dot{\gamma} (0) )|_{\iota_{a_0} (q_0)}) = \Pi_{*} (\dot{q}_{\dr^{(1)}} (\gamma, \iota_{a_0} (q_0)) (0) ) = \dot{q}_{\dr} (\gamma, q_0) (0) = \lr(\dot{\gamma} (0)) |_{q_0}.
$$
Finally, if $q^{(1)} = (x,(a,\hx);A^{(1)}) \in \Oh_{\dr^{(1)}} (\iota_{a_0} (q_0))$, take a path $\gamma$ in $M$ starting from $x_0$ such that $q^{(1)} = q_{\dr^{(1)}} (\gamma, \iota_{a_0} (q_0)) (1)$. By what was done above, it follows that $\Pi (q_{\dr^{(1)}} (\gamma, \iota_{a_0} (q_0)) (t))= q_{\dr} (\gamma, q_0) (t)$ and thus, evaluating this at $t=1$ gives $\Pi (q^{(1)}) \in \odr (q_0)$, whence $\Pi (\Oh_{\dr^{(1)}} (\iota_{a_0} (q_0)))$ $\subset$ $\odr (q_0)$.
The claim that $\iota_{a}$ is an embedding for every $a \in \mathbb{R}$ and $\Pi $ is a surjective submersion are obvious from the fact $\Pi \circ \iota_{a} = \id_{Q}$.
\end{proof}

\begin{corollary}\label{c2}
With the same notations of the previous proposition, if the orbit $\odr (q_0)$ is not open in $Q$ for some $q_0 \in Q$, then $\Oh_{\dr^{(1)}} (\iota_{a_0} (q_0))$ is not open in $Q^{(1)}$.
\end{corollary}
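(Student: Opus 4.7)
The plan is to prove the corollary by contraposition, exploiting directly the two structural facts established in Proposition \ref{p15}: (i) the identity $\odr(q_0) = \Pi(\Oh_{\dr^{(1)}}(\iota_{a_0}(q_0)))$, and (ii) the fact that $\Pi : Q^{(1)} \to Q$ is a surjective submersion.

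First I would recall that any submersion between smooth manifolds is an open map. This is a standard consequence of the local form of a submersion: around each point one can choose charts in which $\Pi$ looks like a linear projection $\R^{N+1}\to \R^{N}$, which is manifestly open. Hence the image under $\Pi$ of any open subset of $Q^{(1)}$ is open in $Q$.

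Now I would argue the contrapositive of the stated implication. Assume that $\Oh_{\dr^{(1)}}(\iota_{a_0}(q_0))$ is open in $Q^{(1)}$. Applying the openness of $\Pi$ to this open set, we get that $\Pi(\Oh_{\dr^{(1)}}(\iota_{a_0}(q_0)))$ is open in $Q$. But by Proposition \ref{p15} this image is exactly $\odr(q_0)$, so $\odr(q_0)$ is open in $Q$. This is the contrapositive of the corollary, which therefore holds.

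There is no real obstacle here, the statement being a formal consequence of Proposition \ref{p15} together with the general openness of submersions; the only point to state carefully is that $\Pi$, being a surjective submersion as proved at the end of the preceding proposition, is in particular open, so that no extra hypothesis on the orbit $\Oh_{\dr^{(1)}}(\iota_{a_0}(q_0))$ (such as being an embedded submanifold) is required to transport the openness property along $\Pi$.
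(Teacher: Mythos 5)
Your proof is correct and follows exactly the same argument as the paper: assume the orbit upstairs is open, use that the surjective submersion $\Pi$ is an open map, and conclude via the identity $\Pi(\Oh_{\dr^{(1)}}(\iota_{a_0}(q_0))) = \odr(q_0)$ from Proposition \ref{p15}. The additional remarks you make about the local normal form of submersions and about not needing the orbit to be embedded are sound but not strictly necessary; the core of the argument matches the paper's proof.
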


\begin{proof}
Suppose $\Oh_{\dr^{(1)}} (\iota_{a_0} (q_0))$ were open in $Q^{(1)}$, then since $\Pi : Q^{(1)} \rarrow Q$ is a smooth submersion, it is an open map and hence its image $\Pi (\Oh_{\dr^{(1)}} (\iota_{a_0} (q_0))) = \odr (q_0)$ is open.
\end{proof}

With the assumption and the notations of Proposition \ref{p15}, we have the following remark.
\begin{remark}\label{r3}
Keeping the same notations as before, recall that $Q = Q(M , \hM)$ is connected and thus, as a consequence of Corollary \ref{c2}, if the system associated to the rolling of $M$ and $\hM^{(1)}$ is controllable then the system associated to the rolling of $M$ and $\hM$ is also controllable.
\end{remark}

\begin{proposition}\label{p16}
Let $(M,g)$ and $(\hM,\hg)$ be Riemannian manifolds of dimensions $n = \hn -1$ and $\hn$, with $\hn \geq 2$ respectively. Let $(M^{(1)}, g^{(1)})$ be the Riemannian product $(\mathbb{R} \times M, dr^2 \oplus g)$, with the obvious orientation. Write $Q^{(1)} = Q(M^{(1)}, \hM)$ and let $\lr^{(1)}$, $\dr^{(1)}$ be the rolling lift and the rolling distribution on $Q^{(1)}$. We define for every $a \in \mathbb{R}$,
$$
\iota_{a} : Q \rarrow Q^{(1)}; \;\;\; \iota_{a} (x, \hx;A) = ((a,x), \hx;A^{(1)}),
$$
where $A^{(1)}: T_{(a,x)} (\mathbb{R} \times M) \rarrow T_{\hx} \hM$ is defined as follows: $A^{(1)} \in Q^{(1)}$,
$$
A^{(1)} |_{T_{x} M} = A, \quad A^{(1)} \partial_r |_{(a,x)} \in (\IM A)^{\perp}.
$$

Then for every $a \in \mathbb{R}$, the map $\iota_{a}$ is an embedding and for every $\qz \in Q$, $a_0 \in \mathbb{R}$ and $X \in T_{x} M \subset T_{(a,x)} (\mathbb{R} \times M)$, one has
$$
(\iota_{a_0})_{*} \lr (X) \onqz = \lr^{(1)} (X) |_{\iota_{a_0} (q_0)}.
$$
Moreover, if one defines
$$
\begin{array}{cccl}
\Pi: & Q^{(1)} & \rarrow & Q;\\
& ((a,x),\hx;A^{(1)}) & \mapsto &  (x,\hx; A^{(1)} \circ (i_{a})_{*} ),
\end{array}
$$
where $i_{a} : M \rarrow \mathbb{R} \times M$; $x \mapsto (a,x)$ and if $\Delta_{R}$ is the subdistribution of $\dr^{(1)}$ defined by
$$
\Delta_{R} |_{q^{(1)}} = (\iota_{a})_{*} \dr |_{\Pi (q^{(1)})}, \;\;\; \forall q^{(1)}= ((a,x), \hx;A^{(1)}) \in Q^{(1)},
$$
then $\iota_{a_0} (\odr(q_0)) = \Oh_{\Delta_{R}} (\iota_{a_0} (q_0)) \subset \Oh_{\dr^{(1)}} (\iota_{a_0} (q_0))$.

\end{proposition}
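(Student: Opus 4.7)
\textbf{Proof plan for Proposition \ref{p16}.}

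The strategy mirrors Proposition \ref{p15}: we will show that every rolling curve in $Q$ has a canonical horizontal lift into $Q^{(1)}$ via $\iota_{a_0}$, namely the lift whose base curve stays in the slice $\{a_0\} \times M \subset M^{(1)}$. The embedding property is immediate from $\Pi \circ \iota_a = \id_Q$ (so $\iota_a$ is injective with smooth left inverse), together with the fact that the orientation requirement $A^{(1)} \in Q^{(1)}$ makes the one-dimensional choice of $A^{(1)} \partial_r \in (\IM A)^{\perp}$ unique and smoothly dependent on $A$. The bulk of the work is therefore the identity
\[
\iota_{a_0} \circ q_{\dr}(\gamma, q_0)(t) \,=\, q_{\dr^{(1)}}(\tilde{\gamma}, \iota_{a_0}(q_0))(t), \qquad \tilde{\gamma}(t) := (a_0, \gamma(t)),
\]
for every absolutely continuous $\gamma:[0,a]\to M$ with $\gamma(0)=x_0$.

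To prove this identity, write $q(t) = (\gamma(t), \hgamma(t); A(t)) = q_{\dr}(\gamma, q_0)(t)$ and define the candidate lift $q^{(1)}(t) := (\tilde{\gamma}(t), \hgamma(t); A^{(1)}(t))$ with $A^{(1)}(t) := \iota_{\gamma_1(t)}(A(t))$, where $\gamma_1(t) = a_0$ is the constant $\mathbb{R}$-coordinate. I will check that $q^{(1)}$ satisfies the no-spinning and no-slipping conditions of \eqref{CM}--\eqref{CN}. The no-slipping condition is immediate: $\dot{\tilde{\gamma}}(t) = (0, \dot{\gamma}(t)) \in T_{\gamma(t)}M \subset T_{(a_0, \gamma(t))}M^{(1)}$, so $A^{(1)}(t) \dot{\tilde{\gamma}}(t) = A(t) \dot{\gamma}(t) = \dot{\hgamma}(t)$. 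For the no-spinning condition, by Proposition \ref{p1} it suffices to show $A^{(1)}(t) = P_0^t(\hgamma) \circ A^{(1)}(0) \circ P_t^0(\tilde{\gamma})$. Since $M^{(1)}$ is a Riemannian product and $\tilde{\gamma}$ has zero $\partial_r$-component, parallel transport along $\tilde{\gamma}$ preserves the orthogonal decomposition $\R \partial_r \oplus T_xM$, and on the $T_xM$-factor it coincides with parallel transport along $\gamma$ in $M$. Thus, for $X \in T_{\gamma(0)}M$, the right-hand side evaluated on $X$ equals $P_0^t(\hgamma) A_0 P_t^0(\gamma) X = A(t) X = A^{(1)}(t) X$, while on $\partial_r$ it equals $P_0^t(\hgamma)(A^{(1)}(0)\partial_r)$, which is a unit vector orthogonal to $P_0^t(\hgamma)(\IM A_0) = \IM A(t)$; continuity and the orientation-preserving property of parallel transport force this to agree with the unique unit vector $A^{(1)}(t)\partial_r \in (\IM A(t))^{\perp}$ selected by the $SO$-condition defining $\iota_{a_0}(A(t))$.

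Once the lifting identity is established, the remaining assertions follow by differentiation and integration. Evaluating $\frac{d}{dt}|_{0}$ of both sides at a smooth $\gamma$ with $\dot{\gamma}(0) = X$ yields
\[
(\iota_{a_0})_{*} \lr(X)|_{q_0} \,=\, \dot{q}^{(1)}(0) \,=\, \lr^{(1)}(X)|_{\iota_{a_0}(q_0)},
\]
where on the right $X \in T_{x_0}M$ is identified with $(i_{a_0})_{*}X \in T_{(a_0, x_0)}M^{(1)}$. Taking $\dr$-admissible concatenations this shows $\iota_{a_0}(\odr(q_0)) \subset \Oh_{\Delta_R}(\iota_{a_0}(q_0))$, and in fact equality holds: a $\Delta_R$-admissible curve from $\iota_{a_0}(q_0)$ has velocity pointwise in $(\iota_{a(t)})_{*}\dr|_{\Pi(q^{(1)}(t))}$, hence its $M^{(1)}$-projection has no $\partial_r$-component so $a(t) \equiv a_0$, and projecting by $\Pi$ gives a $\dr$-admissible curve in $Q$ from $q_0$ whose $\iota_{a_0}$-lift recovers $q^{(1)}$. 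The final inclusion $\Oh_{\Delta_R}(\iota_{a_0}(q_0)) \subset \Oh_{\dr^{(1)}}(\iota_{a_0}(q_0))$ is trivial from $\Delta_R \subset \dr^{(1)}$.

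The main obstacle is the parallel-transport computation for the $\partial_r$-direction: one needs the orientation-selected unit normal to $\IM A(t)$ to depend smoothly and consistently on $t$, which uses both the product structure of $M^{(1)}$ (to freeze the $\R$-coordinate) and the orientation-preserving nature of the Levi-Civita parallel transport along $\hgamma$ on $\hM$. Everything else reduces to bookkeeping via $\Pi \circ \iota_a = \id_Q$ and Proposition \ref{p1}.
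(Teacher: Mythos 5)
Your proposal is correct and follows essentially the same approach as the paper: lift rolling curves along $\gamma$ in $Q$ to rolling curves along $(a_0,\gamma)$ in $Q^{(1)}$ using the Riemannian-product splitting of parallel transport, settle the $\partial_r$-direction by the orientation constraint, differentiate to get $(\iota_{a_0})_*\lr(X) = \lr^{(1)}(X)$, and conclude via $\Pi\circ\iota_{a_0}=\id_Q$. The only cosmetic difference is that you verify the no-spinning/no-slipping conditions directly for $\iota_{a_0}(q(t))$ and argue the equality $\Oh_{\Delta_R}(\iota_{a_0}(q_0))=\iota_{a_0}(\odr(q_0))$ by tracing $\Delta_R$-admissible curves, whereas the paper first builds the candidate $q^{(1)}$ via the parallel-transport formula and uses the immersion property of $\iota_{a_0}|_{\odr(q_0)}$ for the final equality; the content is the same.
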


\begin{proof}
The facts that $\iota_{a}$ is an embedding and $\Pi$ is submersion simply follow from the fact $\Pi \circ \iota_{a} = \id_{Q}$. Let now $\gamma$ be a path in $M$ starting from $x_0$ and $ q (t) = (\gamma (t) , \hgamma (t) ; A (t)) = q_{\dr} (\gamma, q_0) (t)$. We define a path $q^{(1)} (t) = (\gamma^{(1)} (t), \hgamma (t); A^{(1)} (t))$ on $Q^{(1)}$ by
$$
\gamma^{(1)} (t) := (a_0, \gamma (t)), \quad A^{(1)} := P_0^{t} (\hgamma) \circ \iota_{a_0} (A_0) \circ P_{t}^0 (\gamma^{(1)}),
$$
We will show that $q^{(1)}$ is the rolling curve on $Q^{(1)}$ starting from $\iota_{a_0} (q_0)$. Indeed, clearly $q^{(1)} (0) = ((a_0,\gamma (0)), \hgamma (0); \iota_{a_0} (A_0)) = \iota_{a_0} (q_0)$ and for $\iota_{a_0} (A_0) \in Q^{(1)}$ we have $A^{(1)} (t) \in Q^{(1)}$ for all $t$. We also have $
\dot{\gamma}^{(1)} (t) = (0 , \dot{\gamma} (t))$. On the other hand,
$$
\begin{array}{rl}
A^{(1)} (t) \dot{\gamma}^{(1)} (t) & = P_0^{t} (\hgamma) \iota_{a_0} (A_0) P_{t}^0 (\gamma^{(1)}) \dot{\gamma}^{(1)} (t)\\
& = P_0^{t} (\hgamma) \iota_{a_0} (A_0)  P_{t}^0 (\gamma^{(1)}) (0 , \dot{\gamma} (t)).
\end{array}
$$
Since $M^{(1)}$ is a Riemannian product, then $P_{t}^0 (\gamma^{(1)}) (0, X ) = (0, P_{t}^0 (\gamma) X)$ for every $X  \in T_{x_0} M \subset T_{(a_0,x_0)} (\mathbb{R} \times M)$. Therefore,
$$
\begin{array}{rl}
A^{(1)} (t) \dot{\gamma}^{(1)} (t) & = P_0^{t} (\hgamma) \iota_{a_0} (A_0) (0, P_{t}^0 (\gamma)  \dot{\gamma} (t))\\
& =  P_0^{t} (\hgamma) A_0 P_{t}^0 (\gamma)  \dot{\gamma} (t)\\
& =  A(t) \dot{\gamma} (t) = \dot{\hgamma} (t).
\end{array}
$$
This proves that $q^{(1)} (t) = q_{\dr^{(1)}} (\gamma^{(1)}, \iota_{a_0} (q_0)) (t)$ for all $t$.
Furthermore, notice that $\pi_{Q^{(1)}} (\iota_{a_0} (q(t))) = ((a_0, \gamma (t)), \hgamma (t)) = (\gamma^{(1)} (t), \hgamma (t)) = \pi_{Q^{(1)}} (q^{(1)} (t))$ and $A^{(1)} (t) (0,X) = A(t) X = \iota_{a_0} (A(t)) X $ for every $X \in T_{x} M \subset T_{(a_0,x)} (\mathbb{R} \times M)$. However
$
A^{(1)} (t) T_{\gamma (t)} M \perp A^{(1)} (t) \partial_r |_{\gamma^{(1)} (t)}
$ and $(\iota_{a_0} \circ A (t) ) T_{x} M \perp (\iota_{a_0} \circ A (t) ) \partial_r |_{\gamma^{(1)} (t)}$, we must have, by orientation,
$
A^{(1)} (t)\partial_r |_{\gamma^{(1)} (t)} = (\iota_{a_0} \circ A (t) ) \partial_r |_{\gamma^{(1)} (t)}.
$
This proves that $\iota_{a_0} (q(t)) = q^{(1)} (t)$ and hence
$$
(\iota_{a_0})_{*} \lr (\dot{\gamma} (0)) |_{q_0} = (\iota_{a_0})_{*} \dot{q}(0) = \dot{q}^{(1)} (0) =  \lr^{(1)} (\dot{\gamma}^{(1)} (0)) |_{\iota_{a_0} (q_0)} = \lr^{(1)} ((0, \dot{\gamma} (0))) |_{\iota_{a_0} (q_0)}.
$$
So, $(\iota_{a_0})_{*} \lr (X) |_{q_0} = \lr^{(1)} (X) |_{\iota_{a_0} (q_0)}$ for every $X \in T_{x_0} M \subset T_{(a_0,x_0)} (\mathbb{R} \times M)$, then,
$$
\iota_{a_0} (\odr(q_0)) \subset \Oh_{\dr^{(1)}} (\iota_{a_0} (q_0)).
$$
Finally, recall that $\Pi \circ \iota_{a} = \id_{Q}$, then, for every $q \in \odr (q_0)$, we have
$$
\Delta_{R} |_{\iota_{a_0} (q)} = (\iota_{a_0})_{*} \dr \onq \subset T_{\iota_{a_0} (q_0)} (\iota_{a_0} (\odr (q_0))).
$$
Thus, one can write $\Delta_{R} |_{\iota_{a_0} (\odr (q_0))} = (\iota_{a_0})_{*} \dr |_{\odr (q_0)}$. Then,
$
\iota_{a_0} (\odr (q_0)) \subseteq \Oh_{\Delta_{R}} (\iota_{a_0} (q_0)).
$
Since $\iota_{a_0} |_{\odr (q_0)}$ is an immersion, we get the equality
$
\iota_{a_0} (\odr (q_0)) = \Oh_{\Delta_{R}} (\iota_{a_0} (q_0)).
$

\end{proof}

\begin{corollary}\label{c3}
With the assumptions of the previous proposition, if the orbit $\odr (q_0)$ is open in $Q$ for some $q_0 \in Q$, then the codimension of $\Oh_{\dr^{(1)}} (\iota_{a_0} (q_0))$ in $Q^{(1)}$ is at most 1.
\end{corollary}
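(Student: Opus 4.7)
The plan is to derive the corollary from a direct dimensional count. Since $\odr(q_0)$ is by hypothesis an open subset of $Q$, it satisfies $\dim\odr(q_0) = \dim Q$. The embedding $\iota_{a_0}$ provided by Proposition \ref{p16} then turns $\iota_{a_0}(\odr(q_0))$ into an embedded submanifold of $Q^{(1)}$ of the same dimension $\dim Q$.

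Next I would invoke the set-theoretic inclusion $\iota_{a_0}(\odr(q_0)) \subseteq \Oh_{\dr^{(1)}}(\iota_{a_0}(q_0))$ established in Proposition \ref{p16}. Every point of $\iota_{a_0}(\odr(q_0))$ is reached from $\iota_{a_0}(q_0)$ by a $\dr^{(1)}$-admissible curve: starting from any piecewise smooth $\dr$-admissible curve joining $q_0$ to $q$ in $Q$, its image under $\iota_{a_0}$ is $\dr^{(1)}$-admissible thanks to the identity $(\iota_{a_0})_{*}\lr(X)|_{q} = \lr^{(1)}(X)|_{\iota_{a_0}(q)}$ from Proposition \ref{p16}. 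The Orbit Theorem then guarantees that $\iota_{a_0}(\odr(q_0))$ sits as an immersed submanifold inside the orbit $\Oh_{\dr^{(1)}}(\iota_{a_0}(q_0))$, so that
\[
\dim \Oh_{\dr^{(1)}}(\iota_{a_0}(q_0)) \geq \dim \iota_{a_0}(\odr(q_0)) = \dim Q.
\]

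The final step is arithmetic based on the dimension formula of Proposition \ref{p10}(i). For $Q=\qmhm$ one has $(n,\hn)=(n,n+1)$ and $N=n$, while for $Q^{(1)}=Q(M^{(1)},\hM)$ the relevant base dimensions are $(n+1,n+1)$ with $N^{(1)}=n+1$. A direct expansion of $\dim Q = n + \hn + n\hn - \frac{N(N+1)}{2}$ in both cases yields $\dim Q^{(1)} - \dim Q = 1$. Combining this with the preceding inequality gives that the codimension of $\Oh_{\dr^{(1)}}(\iota_{a_0}(q_0))$ in $Q^{(1)}$ is at most $1$.

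The only real subtlety lies in the \emph{initial submanifold} step: one needs that an immersed submanifold of $Q^{(1)}$ all of whose points are reached from a base point by $\dr^{(1)}$-admissible curves is not merely contained set-theoretically in the orbit but lives in it as an immersed submanifold, so that the dimension inequality makes sense. This is a standard consequence of the Orbit Theorem used elsewhere in the paper; once it is granted, the corollary reduces to Proposition \ref{p16} together with the arithmetic above.
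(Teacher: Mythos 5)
Your proof is correct and follows essentially the same route as the paper: the key inequality $\dim \Oh_{\dr^{(1)}}(\iota_{a_0}(q_0)) \geq \dim Q$ followed by the arithmetic $\dim Q^{(1)} = \dim Q + 1$. The only stylistic difference is that the paper invokes the subdistribution $\Delta_R$ from Proposition \ref{p16} and the chain $\iota_{a_0}(\odr(q_0)) = \Oh_{\Delta_R}(\iota_{a_0}(q_0)) \subset \Oh_{\dr^{(1)}}(\iota_{a_0}(q_0))$, whereas you re-derive that containment directly from the push-forward identity $(\iota_{a_0})_*\lr(X) = \lr^{(1)}(X)$; these are the same argument.
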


\begin{proof}
The relation between the dimension of $Q$ and that of $Q^{(1)}$ is
$$
\dim Q = 2 \hn -1 + \frac{\hn (\hn -1)}{2} = \dim Q^{(1)} - 1.
$$
On the other hand, if $\odr(q_0)$ is open in $Q$ then one has $\dim \odr (q_0) = \dim Q$. Thus,
$$
\dim \Oh_{\dr^{(1)}} (\iota_{a_0} (q_0)) \geq \dim \Oh_{\Delta_{R}} (\iota_{a_0} (q_0)) = \dim \iota_{a_0} (\odr (q_0)) = \dim Q = \dim Q^{(1)} - 1.
$$
\end{proof}

\begin{theorem}\label{t9}
Let $M$ and $\hM$ be Riemannian manifolds of dimension $n=3$ and $\hn = 2$ respectively. If, for some $\qz \in Q$, the orbit $\odr (q_0)$ is not open in $Q$, then there exists an open dense subset $O$ of $\odr(q_0)$ such that for every $\qo \in O$ there is an open neighbourhood $U$  of $x_1$ for which it holds that $(U, g \mid_{U})$ is isometric to some warped product $(I \times N, h_{f})$, where $I \subset \mathbb{R}$ is an open interval and the warping function $f$ satisfying $f'' =0$.
\end{theorem}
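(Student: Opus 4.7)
The strategy is to reduce the $(n,\hn)=(3,2)$ case to the already analysed rolling problem between two $3$-dimensional manifolds, treated in \cite{ChitourKokkonen1}. Apply Proposition \ref{p15} with $\hM^{(1)} := \mathbb{R}\times\hM$, a $3$-dimensional Riemannian product, and pick an arbitrary $a_0\in\mathbb{R}$. The embedding $\iota_{a_0}:Q\to Q^{(1)}=Q(M,\hM^{(1)})$ furnished there gives the identity $\odr(q_0)=\Pi(\Oh_{\dr^{(1)}}(\iota_{a_0}(q_0)))$. Since by hypothesis $\odr(q_0)$ is not open in $Q$, Corollary \ref{c2} ensures that $\Oh_{\dr^{(1)}}(\iota_{a_0}(q_0))$ is not open in $Q^{(1)}$, so the $(3,3)$ machinery becomes applicable at the lifted initial condition $\iota_{a_0}(q_0)$.

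Now invoke the non-openness structure theorem for rolling two $3$-dimensional Riemannian manifolds from \cite{ChitourKokkonen1}. It produces an open dense subset $O^{(1)}\subset \Oh_{\dr^{(1)}}(\iota_{a_0}(q_0))$ such that, around the base point $x$ of any $q^{(1)}=(x,(a,\hx);A^{(1)})\in O^{(1)}$, the manifold $M$ is locally isometric to a warped product $(I\times N,h_f)$, and similarly $\hM^{(1)}$ is locally a warped product $(\hat I\times \hat N,h_{\hat f^{(1)}})$ near $(a,\hx)$, with warping functions subject to a compatibility relation (coming from the vanishing of the iterated rolling curvature brackets forced by the non-openness of the orbit). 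The decisive observation is that $\hM^{(1)}=\mathbb{R}\times\hM$ is itself a Riemannian product, so a local warped product description on $\hM^{(1)}$ is available with \emph{constant} warping function $\hat f^{(1)}\equiv 1$; in particular $(\hat f^{(1)})''\equiv 0$ and all its higher derivatives vanish. Substituting this into the compatibility relation between $f$ and $\hat f^{(1)}$ collapses it to the single constraint $f''=0$ on the $M$ side, which is precisely the desired conclusion about the warping function.

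Finally, set $O:=\Pi(O^{(1)})\subset\odr(q_0)$. Because $\Pi$ is an open surjective submersion (Proposition \ref{p15}) and a right inverse to $\iota_{a_0}$, standard bundle arguments show that $O$ is open dense in $\odr(q_0)$; for each $q_1=(x_1,\hx_1;A_1)\in O$ any preimage $\iota_{a}(q_1)\in O^{(1)}$ furnishes a neighbourhood $U$ of $x_1$ in $M$ on which $(U,g|_U)$ is isometric to the required warped product $(I\times N,h_f)$ with $f''=0$.

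The main obstacle is purely bookkeeping from \cite{ChitourKokkonen1}: one must verify that the compatibility conditions produced by the $(3,3)$ non-openness theorem, when specialised to the situation where one of the factors is an honest Riemannian product (so that one warping function is identically $1$), truly reduce to $f''=0$ on the other factor rather than to some stronger condition; and one must check that the open dense subset $O^{(1)}$ descends through $\Pi$ to an open dense subset of $\odr(q_0)$ compatibly with the local warped product charts. Neither step introduces new geometric content beyond the tools already assembled above.
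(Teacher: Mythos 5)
Your reduction — embed via $\iota_{a_0}$ into $Q^{(1)}=Q(M,\mathbb{R}\times\hM)$, transfer non-openness with Corollary \ref{c2}, invoke the non-openness structure theorem for rolling two $3$-manifolds from \cite{ChitourKokkonen1}, then push the resulting open dense set forward along $\Pi$ — is precisely the paper's argument. The only point your proposal leaves vague is the shape of the conclusion of the $(3,3)$ theorem: it is in fact a trichotomy (local Riemannian product, local structure of class $\mathcal{M}_\beta$, or local warped product), and the paper handles each branch separately — the product case gives $f\equiv 1$, the $\mathcal{M}_\beta$ case is impossible because $\hM^{(1)}=\mathbb{R}\times\hM$ is a Riemannian product and hence cannot be of class $\mathcal{M}_\beta$, and in the warped-product case the product structure of $\hM^{(1)}$ forces $\hf''=0$ and hence $f''=0$ — rather than a single ``compatibility relation'' to be specialised; since you explicitly flagged this verification as the remaining obstacle, the proposal is substantively correct and follows the same route.
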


\begin{proof}
We will proceed by using Proposition \ref{p15}. Let $(M^{(1)},g^{(1)})$ be the Riemannian product $(\mathbb{R} \times \hM , dr^2 \oplus \hg)$ and let $a_0 \in \mathbb{R}$. Since the orbit $\odr (q_0)$ is not open in $Q$, it follows from Corollary \ref{c2} that $\Oh_{\dr^{(1)}} (\iota_{a_0} (q_0))$ is not open in $Q^{(1)}$. Theorem 7.1 of Section 7 in \cite{ChitourKokkonen} provides an open subset $O^{(1)}$ of $\Oh_{\dr^{(1)}} (\iota_{a_0} (q_0))$ such that one of $(a) - (c)$ of this theorem holds. So, $O := \Pi (O^{(1)})$ is a dense open of $\odr (q_0)$ and let$\qo \in O$, then choose $q_1^{(1)} \in O^{(1)}$ such that $\Pi (q_1^{(1)}) = q_1$, whence $q_1^{(1)} = \iota_{a_1} (q_1)$ for some $a_1 \in \mathbb{R}$. Moreover, if $U$ and $\hU^{(1)}$ are the neighborhoods of $x_1$ and $(a_1,\hx_1)$, respectively, as in Theorem 7.1 mentioned before, then we can choose $\hU^{(1)}$ to be of the form $I \times \hU$ for some open interval $I \subset \mathbb{R}$ and open neighborhood $\hU \subset \hM$ of $\hx_1$. We consider the possible subcases.
\\
If $(a)$ holds, then $(U, g\mid_{U})$ is (locally) isometric to the Riemannian product $I \times \hU$, hence we have $f=1$.
If $(b)$ holds, then $(U, g\mid_{U})$ and $(\hU^{(1)}, g^{(1)} \mid_{\hU^{(1)}})$ are both of class $\mathcal{M}_{\beta}$ for some $\beta > 0$, but $(\hU^{(1)}, g^{(1)} \mid_{\hU^{(1)}})$ is as a Riemannian product, so it cannot be of such class $\mathcal{M}_{\beta}$, thus this case cannot occur.
If $(c)$ holds, let $F: (I \times N, h_{f}) \rarrow U$ and $\hF: (\hat{I} \times \hat{N}, \hh_{\hf}) \rarrow \hU$ be the isomorphisms, it means that $(\hat{I} \times \hat{N}, \hh_{\hf})$ is isomorphic to a Riemannian product which implies $\hf$ must satisfy $\hf'' =0$ thus also $f'' =0$.
\end{proof}

\section{Controllability Results}

\subsection{The Rolling Problem $\Sigma_{NS}$}

We start by the following remark about the non-compatibility of the ($NS$) system in the space $\tsmthm$.
\begin{remark}
The result of Theorem (\ref{t1}) can obviously be formulated in the space $\tsmthm$ instead of $Q$. Thus, it implies that each orbit $\odns (q_0)$ of $\dns $ in $\tsmthm$, $\qz \in \tsmthm$, has dimension at most $n + \hn + \dim H \onxz + \dim \hH \onhxz \leq n + \hn + \frac{n (n-1)}{2} + \frac{\hn (\hn-1)}{2} $. Let $N = max\{ n , \hn \}$ and $r = \mid \hn - n \mid$. Since the dimension of $\tsmthm$ is $n + \hn + n\hn$, then
$$
\hbox{codim} \odns (q_0) \geq n \hn - \frac{n (n-1)}{2} - \frac{\hn (\hn-1)}{2} = \frac{N-r^2} {2} > \lceil \frac{N-r^2} {2} \rceil,
$$
where $\lceil k  \rceil$ stands for the integer part of a real number $k$,
which means that $ \hbox{codim} \odns (q_0) \geq \lceil \frac{N-r^2} {2} \rceil + 1$, i.e., $\dns$ in never completely controllable in $\tsmthm$.
\end{remark}

Theorem \ref{t1} states that the controllability of $\dns$ is completely determined by the holonomy groups of $M$ and $\hM$. The next theorem highlights that fact at the Lie algebraic level.

\begin{theorem}\label{t2}
Fix some orthonormal frames $F$, $\hF$ of $M$, $\hM$ at $x$ and $\hx$ respectively. Let $\mathfrak{h} := \mathfrak{h} |_{F} \subset \mathfrak{so} (n)$ and $\hat{\mathfrak{h}} := \hat{\mathfrak{h}} |_{\hF} \subset \mathfrak{so} (\hn)$ be the holonomy Lie algebras of $M$ and $\hM$ with respect to these frames. Then the control system $(\sum)_{NS}$ is completely controllable if and only if for every $A \in SO (n,\hn)$ (defined in (\ref{B})),
\begin{eqnarray}
\hat{\mathfrak{h}} A - A \mathfrak{h}=
\left\lbrace
\begin{array}{lr}
\{ B \in (\mathbb{R}^{n})^{*} \otimes \mathbb{R}^{\hn} \mid A^{T} B \in \mathfrak{so} (n) \}, & if \; n < \hn, \\
\{ B \in (\mathbb{R}^{n})^{*} \otimes \mathbb{R}^{\hn} \mid B A^{T} \in \mathfrak{so} (\hn) \}, & if \; n > \hn.
\end{array}
\right.
\end{eqnarray}
\end{theorem}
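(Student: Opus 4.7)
The plan is to reduce complete controllability of $(\Sigma)_{NS}$ to a fiber-by-fiber infinitesimal condition at every point of $Q$, and then rewrite that intrinsic condition as the matrix identity stated in the theorem by fixing once and for all a pair of orthonormal frames.

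First, I would observe via Proposition \ref{p2} that $(\pq)_{*}$ maps $\dns|_q$ isomorphically onto $T_{(x,\hx)}(M\times\hM)$ at every $q\in Q$. Consequently, the projection $\pq(\odns(q_0))$ is open in the connected manifold $M\times\hM$ and hence equals all of $M\times\hM$; in particular, since $Q$ is connected under our standing hypothesis (Proposition \ref{p10}(iii)), complete controllability of $(\Sigma)_{NS}$ is equivalent to the fiber-wise equality $\odns(q_0)\cap \pq^{-1}(x,\hx)=\pq^{-1}(x,\hx)$ for every $(x,\hx)$ and every $q_0$.

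Second, I would combine Theorem \ref{t1} with Proposition \ref{p9} at $q_0=(x_0,\hx_0;A_0)$. Equation \eqref{L} identifies the fiber intersection as the compact set $\hH|_{\hx_0}\circ A_0\circ H|_{x_0}$, and \eqref{DW} gives its tangent space as $\nu(\hat{\mathfrak{h}}|_{\hx_0}\circ A_0-A_0\circ\mathfrak{h}|_{x_0})|_{q_0}$. On the other hand, Proposition \ref{p9} identifies the full vertical tangent space $V|_{q_0}(\pq)$ with $\nu(\{B:A_0^TB\in\mathfrak{so}(T_{x_0}M)\})|_{q_0}$ when $n\leq\hn$ and with $\nu(\{B:BA_0^T\in\mathfrak{so}(T_{\hx_0}\hM)\})|_{q_0}$ when $n\geq\hn$. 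If the two vertical subspaces coincide then $\hH|_{\hx_0}\circ A_0\circ H|_{x_0}$ is open in the fiber $\pq^{-1}(x_0,\hx_0)$; being also compact, hence closed, and since that fiber is diffeomorphic to the connected space $O_n(\mathbb{R}^{\hn})$ by Corollary \ref{cor-not} and Proposition \ref{p22}, the fiber intersection fills the whole fiber. The converse (if the orbit equals the fiber then tangent spaces match) follows by differentiation.

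Third, I would transfer this intrinsic point-wise identity to matrix form. Fixing orthonormal frames $F$ at $x$ and $\hF$ at $\hx$ and parallel-transporting them along any piecewise-smooth path to an arbitrary point $(x',\hx')$, I get orthonormal frames whose representations of the holonomy groups remain equal to the matrix groups $H|_F$ and $\hH|_{\hF}$ (this path-independence in parallel-transported frames is the standard feature of holonomy). An arbitrary $A_0\in\pq^{-1}(x',\hx')$ is then represented by an arbitrary matrix $A\in SO(n,\hn)$, and the identity $\hat{\mathfrak{h}}|_{\hx'}\circ A_0-A_0\circ\mathfrak{h}|_{x'}=V|_{A_0}(\pq)$ becomes exactly $\hat{\mathfrak{h}}A-A\mathfrak{h}=\{B:A^TB\in\mathfrak{so}(n)\}$ (resp.\ $\{B:BA^T\in\mathfrak{so}(\hn)\}$). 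Thus quantification over all $q_0\in Q$ translates to quantification over all $A\in SO(n,\hn)$.

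The step I expect to require the most care is the second one: one must verify that openness of the compact set $\hH|_{\hx_0}\circ A_0\circ H|_{x_0}$ inside the fiber does force equality with the whole fiber, which hinges on the precise connectedness statement of Proposition \ref{p22} (the delicate case being $n=\hn$, where orientation is essential). The passage to frames in the third step is conceptually routine but must be handled carefully to ensure that the parallel transport of $F$ and $\hF$ genuinely leaves the matrix representation of the holonomy Lie algebras invariant, independently of the path chosen.
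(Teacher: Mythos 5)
Your overall strategy coincides with the paper's: reduce controllability to a vertical-tangent-space condition via Theorem~\ref{t1} and Proposition~\ref{p9}, then translate to a matrix identity by fixing orthonormal frames and using that $\{\mathcal{M}_{F,\hF}(A):A\in Q|_{(x_0,\hx_0)}\}=SO(n,\hn)$. Steps 1 and 3 are sound and essentially those of the paper. The frame-transfer argument in step 3 is fine once one notes that holonomy Lie algebras expressed in parallel-transported frames are conjugation-invariant, hence the matrix condition is independent of the choice of base point and frame.

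However, step 2 contains a genuine gap. You assert that $\hH|_{\hx_0}\circ A_0\circ H|_{x_0}$ is compact, "hence closed," and then invoke connectedness of the fiber to conclude it fills the whole fiber. But the full holonomy group of a complete oriented Riemannian manifold need \emph{not} be compact: only the restricted holonomy group $H^0$ is known to be a closed (hence compact) subgroup of $SO(n)$, while $H/H^0$ can be an infinite (even non-discrete-closure) subgroup. For instance, the flat complete manifold $\mathbb{R}^3/\Gamma$, where $\Gamma$ is generated by a screw motion with irrational rotation angle, has holonomy group equal to a dense non-closed subgroup of $SO(2)\subset SO(3)$. For such a manifold your compactness claim fails, and with it the open-and-closed argument inside the fiber. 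The paper sidesteps this entirely by staying at the level of the orbit in $Q$: once one knows $\dns|_{q}\subset T_q\odns(q_0)$ (from Proposition~\ref{p2}) and $V|_q(\pq)\subset T_q\odns(q_0)$ (from the hypothesis via Eq.~\eqref{DW} and Proposition~\ref{p9}), the orbit has full tangent space, hence is an open subset of the connected manifold $Q$, hence equals $Q$. Since orbits partition $Q$, this argument needs no compactness. You should replace your fiber-wise open-closed argument by this orbit-level argument; with that substitution the proof is correct.
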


\begin{proof}
By connectedness of $Q$, we get that $\dns $ is controllable if and only if every $\odns (q)$, $\q \in Q$, is open in $Q$. Clearly, an orbit $\odns (q_0) = Q$, $\qz \in Q$, is an open subset of $Q$ if and only if $T_{q} \odns (q_0) = T_{q} Q$ for some (and hence every) $q \in \odns (q_0)$. Thus the decomposition given by Remark \ref{r1} implies that an orbit $\odns (q_0)$ is open in $Q$ if and only if $V |_{q} (\pq) \subset T_{q} \odns (q_0)$ for some $q \in \odns (q_0)$.\\
Fix $(x_0, \hx_0) \in M \times \hM$. Theorem \ref{t1} implies that every $\dns$-orbit intersects every $\pq$-fiber. Hence $\dns$ is controllable if and only if $V |_{q} (\pq) \subset T_{q} \odns (q_0)$ for every $q = (x_0, \hx_0; A) \in Q |_{(x_0, \hx_0)}$. By (\ref{DW}), this condition is equivalent to the condition that, for every $q = (x_0, \hx_0; A) \in Q |_{(x_0, \hx_0)}$,
$$
\nu (\hat{\mathfrak{h}} \onhxz \circ A - A \circ  \mathfrak{h} \onxz) \onqz = V |_{q} (\pq).
$$
By Proposition \ref{p9}, one can deduces that, for every $q \in Q$,
$$
V |_{q} (\pq) =
\left\lbrace
\begin{array}{lr}
\nu(\{B \in T^*_{x_0} M \otimes T_{\hx_0}\hat{M} \mid A^{\overline{T}} B \in \mathfrak{so} (T_{x_0} M) \}) \onq, & if \; n \leq \hn,\\
\nu(\{B \in T^*_{x_0} M \otimes T_{\hx_0}\hat{M} \mid BA^{\overline{T}} \in \mathfrak{so} (T_{\hx_0} \hM) \}) \onq, & if \; n \geq \hn.
\end{array}
\right.
$$
Thus, we conclude that $\dns$ is controllable if and only if, for all $q = (x_0, \hx_0; A) \in Q |_{(x_0, \hx_0)}$
$$
\hat{\mathfrak{h}} \onhxz \circ A - A \circ  \mathfrak{h} \onxz =
\left\lbrace
\begin{array}{lr}
\{B \in T^*_{x_0} M \otimes T_{\hx_0}\hat{M} \mid A^{\overline{T}} B \in \mathfrak{so} (T_{x_0} M) \}, & if \; n \leq \hn,\\
\{B \in T^*_{x_0} M \otimes T_{\hx_0}\hat{M} \mid BA^{\overline{T}} \in \mathfrak{so} (T_{\hx_0} \hM) \}, & if \; n \geq \hn.
\end{array}
\right.
$$
Choosing arbitrary orthonormal local frames $F$ and $\hF$ of $M$ and $\hM$ at $x_0$ and $\hx_0$, respectively, we see that the above condition is equivalent to
$$
\hat{\mathfrak{h}} |_{\hF} \mathcal{M}_{F, \hF} (A) - \mathcal{M}_{F, \hF} (A) \mathfrak{h} |_{F}=
\left\lbrace
\begin{array}{lr}
\{B \in (\mathbb{R}^{n})^{*} \otimes \mathbb{R}^{\hn} \mid \mathcal{M}_{F, \hF} (A)^{\overline{T}} B \in \mathfrak{so} (n) \}, & if \; n \leq \hn,\\
\{B \in (\mathbb{R}^{n})^{*} \otimes \mathbb{R}^{\hn} \mid B\mathcal{M}_{F, \hF} (A)^{\overline{T}} \in \mathfrak{so} (\hn) \}, & if \; n \geq \hn.
\end{array}
\right.
$$
Since we have $\{\mathcal{M}_{F, \hF} (A) \mid A \in Q |_{(x_0,\hx_0)} \} = SO (n, \hn)$, $\tsmthm \cong (\mathbb{R}^{n})^{*} \otimes \mathbb{R}^{\hn}$ and $F, \hF$ were arbitrary chosen, the claim follows.
\end{proof}

\begin{theorem}\label{t3}
Suppose that $M$, $\hM$ are simply connected. Then $(\Sigma)_{NS}$ is completely controllable if and only if
\begin{eqnarray}\label{Q}
\hat{\mathfrak{h}} I_{n,\hn} - I_{n,\hn} \mathfrak{h}=
\left\lbrace
\begin{array}{lr}
\{ B \in (\mathbb{R}^{n})^{*} \otimes \mathbb{R}^{\hn} \mid I_{n,\hn}^{T} B \in \mathfrak{so} (n) \}, & if \; n \leq \hn, \\
\{ B \in (\mathbb{R}^{n})^{*} \otimes \mathbb{R}^{\hn} \mid B I_{n,\hn}^{T} \in \mathfrak{so} (\hn) \}, & if \; n \geq \hn.
\end{array}
\right.
\end{eqnarray}
\end{theorem}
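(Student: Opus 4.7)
The plan is to reduce Theorem \ref{t3} to Theorem \ref{t2} by showing that, under simple connectedness, the single condition at $A=I_{n,\hn}$ propagates to every $A\in SO(n,\hn)$. The ``only if'' direction is immediate by specializing Theorem \ref{t2} to $A=I_{n,\hn}$, so the whole issue lies in the ``if'' direction.

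For the ``if'' direction, I would work with the natural action of $H\times\hat H$ on $SO(n,\hn)$ by $(h,\hat h)\cdot A:=\hat h A h^{-1}$, writing $V_A$ for the right-hand side of the equation in Theorem \ref{t2}. The first step is a short algebraic check that the desired identity is \emph{orbit-invariant}: if $A'=\hat h A h^{-1}$, then using $\hat h\hat{\mathfrak h}\hat h^{-1}=\hat{\mathfrak h}$ and $h\mathfrak h h^{-1}=\mathfrak h$ one gets
\[
\hat{\mathfrak h}A'-A'\mathfrak h=\hat h(\hat{\mathfrak h}A-A\mathfrak h)h^{-1},
\]
and a parallel computation (using $(\hat h A h^{-1})^T=h A^T\hat h^{-1}$ and the fact that conjugation by elements of $SO(n)$ or $SO(\hn)$ preserves $\mathfrak{so}(n)$, $\mathfrak{so}(\hn)$) shows $V_{A'}=\hat h V_A h^{-1}$. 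Hence if the identity of Theorem \ref{t3} holds at $I_{n,\hn}$, it holds at every point of the $H\times\hat H$-orbit of $I_{n,\hn}$.

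The second step is to show that this orbit is the whole of $SO(n,\hn)$. Since $M$ and $\hM$ are simply connected, the holonomy groups $H$ and $\hat H$ coincide with their restricted holonomy groups, which by the classical Borel--Lichnerowicz theorem are closed connected Lie subgroups of $SO(n)$ and $SO(\hn)$. Consequently $H\times\hat H$ is a compact connected Lie group, and its orbit through $I_{n,\hn}$ is compact and connected in $SO(n,\hn)$, hence \emph{closed}. On the other hand the infinitesimal version of the hypothesis, namely $\hat{\mathfrak h}I_{n,\hn}-I_{n,\hn}\mathfrak h=V_{I_{n,\hn}}=T_{I_{n,\hn}}SO(n,\hn)$, says precisely that the orbit is \emph{open} at $I_{n,\hn}$, and by homogeneity open everywhere. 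Since $SO(n,\hn)$ is connected (being a Stiefel-type manifold, for the three cases $n<\hn$, $n=\hn$, $n>\hn$), an open-and-closed nonempty subset must be all of $SO(n,\hn)$.

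Combining the two steps, for any $A\in SO(n,\hn)$ one may write $A=\hat h I_{n,\hn}h^{-1}$ with $h\in H$, $\hat h\in\hat H$, and the orbit-invariance then yields $\hat{\mathfrak h}A-A\mathfrak h=V_A$. Theorem \ref{t2} gives the complete controllability of $(\Sigma)_{NS}$. The main technical obstacle is the closedness of the (restricted) holonomy groups, since without it $H\times\hat H$ might only be an immersed Lie subgroup and its orbit through $I_{n,\hn}$ would not automatically be closed; the Borel--Lichnerowicz result is exactly what is needed to make the open-plus-closed argument go through.
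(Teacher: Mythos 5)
Your proof is correct, but it takes a genuinely different and more explicit route than the paper. The paper's ``if'' direction is shorter: from condition \eqref{Q} it produces a single point $q_0=(x_0,\hx_0;A_0)$ at which $T_{q_0}\odns(q_0)=T_{q_0}Q$ (via Proposition \ref{p9}, equality \eqref{DW}, and Remark \ref{r1}), concludes that $\odns(q_0)$ is open in $Q$, and then invokes the connectedness of $Q$ to get $\odns(q_0)=Q$. You instead push the condition from $A=I_{n,\hn}$ to \emph{every} $A\in SO(n,\hn)$ — showing invariance under the $H\times\hat H$-action, and then that the $H\times\hat H$-orbit of $I_{n,\hn}$ is open (the infinitesimal hypothesis), closed (compactness of $H\times\hat H$ via Borel--Lichnerowicz under simple connectedness), and hence all of the connected space $SO(n,\hn)$ — so that Theorem \ref{t2} applies directly. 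What your version buys is that the passage from ``one orbit is open'' to ``the orbit is everything'' is completely spelled out: that step is not automatic for orbits of a non-involutive distribution, since an open orbit need not be closed, and making it go through requires exactly the closedness of the holonomy groups that you identify and justify via Borel--Lichnerowicz. The paper's one-line appeal to connectedness of $Q$ is implicitly leaning on the same fact (open orbit together with compact holonomy yields a closed, hence full, fiber in \eqref{L}); your proof makes this dependence on simple connectedness precise, which is appropriate since simple connectedness is an explicit hypothesis of Theorem \ref{t3}.
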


\begin{proof}
Notice that $I_{n,\hn} \in SO (n,\hn)$, then the previous theorem give the necessary condition.\\
Conversely, suppose that the condition (\ref{Q}) holds. This condition implies that for $(x_0,\hx_0) \in M \times \hM$, there is an $\qz \in Q |_{(x_0,\hx_0)}$ such that
$$
\hat{\mathfrak{h}} A_0 - A_0 \mathfrak{h}=
\left\lbrace
\begin{array}{lr}
\{ B \in (\mathbb{R}^{n})^{*} \otimes \mathbb{R}^{\hn} \mid A_0^{T} B \in \mathfrak{so} (n) \}, & if \; n \leq \hn, \\
\{ B \in (\mathbb{R}^{n})^{*} \otimes \mathbb{R}^{\hn} \mid B A_0^{T} \in \mathfrak{so} (\hn) \}, & if \; n \geq \hn.
\end{array}
\right.
$$
By Proposition \ref{p9} and the equality \ref{DW}, this means that $T_{q_0} \odns (q_0) \cap V \onqz (\pq) = V \onqz (\pq) $ and hence $T_{q_0} \odns (q_0) = T_{q_0} Q$ due to Remark \ref{r1}. Thus $\odns (q_0)$ is open in $Q$. By the connectedness of $Q$, we have that $\odns (q_0) = Q$. Therefore, $(\Sigma)_{NS}$ is completely controllable.

\end{proof}

\begin{remark}
The proofs of Theorems \ref{t2} and \ref{t3} are similar to that of Theorems 4.8 and 4.9 of Section 4 in \cite{ChitourKokkonen}.
\end{remark}

\subsection{The Rolling Problem $\Sigma_{R}$}

From Proposition \ref{p4}, we get the subsequent proposition and corollary whose proofs follow those of Proposition 5.20 and Corollary 5.21 of Section 5 in \cite{ChitourKokkonen}.

\begin{proposition}\label{p21}
Let $\qz \in Q$. Suppose that, for some $X \in \VF(M)$ and a real sequence $(t_{n})_{n=1}^{\infty}$ such that $t_{n} \neq 0$ for all $n$, $\lim_{n \rarrow \infty} t_{n} =0$, we have
\begin{equation}\label{T}
V |_{\Phi_{\lr (X)} (t_{n},q_0)} (\pq) \subset T(\odr (q_0)), \;\;\; \forall n.
\end{equation}
Then $\lns(Y,\hY) |_{q_0} \in T_{q_0} (\odr (q_0))$ for every $Y$ $g$-orthogonal to $X |_{x_0}$ in $T_{x_0} M$ and every $\hY$ $\hg$-orthogonal to $A_0 X |_{x_0}$, $\in A_0 (X |_{x_0})^{\perp}$ in $T_{\hx_0} \hM$. Hence the orbit $\odr (q_0)$ has codimension at most $| \hn - n | +1$ inside $Q$.
\end{proposition}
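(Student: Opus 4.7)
The plan is to exploit the hypothesis in two steps: first, upgrade it to $V|_{q_0}(\pq)\subset T_{q_0}\odr(q_0)$ by a closed-subspace limit argument; second, pull back vertical vectors along the flow of $\lr(X)$ to extract horizontal (that is, $\lns$-type) vectors that must also lie in $T_{q_0}\odr(q_0)$. Combining the output with $\lr(Y)=\lns(Y,A_0Y)$, which is tangent to $\odr(q_0)$ for every $Y\in T_{x_0}M$, will produce the desired $\lns(Y,\hY)|_{q_0}\in T_{q_0}\odr(q_0)$, and a dimension count will give the codimension bound.

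For the first step, pick any smooth section $\sigma$ of the subbundle $V(\pq)$ near $q_0$. By the hypothesis, $\sigma(\Phi_{\lr(X)}(t_n,q_0))\in T_{\Phi_{\lr(X)}(t_n,q_0)}\odr(q_0)$; letting $t_n\to 0$ and using smoothness of $T\odr(q_0)$ as a subbundle of $TQ|_{\odr(q_0)}$, the limit $\sigma(q_0)$ lies in $T_{q_0}\odr(q_0)$. As $\sigma$ was arbitrary, $V|_{q_0}(\pq)\subset T_{q_0}\odr(q_0)$.

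For the second step, let $\phi_t$ be the local flow of $\lr(X)$ at $q_0$ and set $q(t):=\phi_t(q_0)=q_{\dr}(\gamma,q_0)(t)$ with $\gamma$ the integral curve of $X$ through $x_0$. Given any admissible $U_0$ at $q_0$ (so $A_0^{T}U_0\in\mathfrak{so}(T_{x_0}M)$ if $n\leq\hn$, resp.\ $U_0A_0^{T}\in\mathfrak{so}(T_{\hx_0}\hM)$ if $n\geq\hn$), set $U(t):=P_0^{t}(\hgamma)\circ U_0\circ P_{t}^{0}(\gamma)$; parallel transport preserves antisymmetry, so $U(t)$ is again admissible at $q(t)$, whence $\nu(U(t_n))|_{q(t_n)}\in V|_{q(t_n)}(\pq)\subset T_{q(t_n)}\odr(q_0)$ by the hypothesis. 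Since $\lr(X)$ is tangent to $\odr(q_0)$, the pull-back $w(t):=(\phi_{-t})_{*}\nu(U(t))|_{q(t)}$ takes values in $T_{q_0}\odr(q_0)$ at each $t=t_n$, and as $w$ is smooth in $t$ and $T_{q_0}\odr(q_0)$ is a linear subspace of $T_{q_0}Q$, both $w(0)=\nu(U_0)|_{q_0}$ and $w'(0)$ lie in $T_{q_0}\odr(q_0)$. By the standard Lie-derivative identity, $w'(0)=[\lr(X),\nu(U)]|_{q_0}$; this value is independent of the chosen smooth extension of $U$ to a neighborhood of $q_0$. Applying Proposition~\ref{p4}(2) with $\tbar(q)=(X|_{x},AX|_{x})$ and noting that $\nu(U_0)|_{q_0}\tbar=(0,U_0X|_{x_0})$ yields
\[
[\lr(X),\nu(U)]|_{q_0}=-\lns(0,U_0X|_{x_0})|_{q_0}+\nu(B)|_{q_0},
\]
for some vertical $B$ depending on the extension; by the first step this vertical summand lies in $V|_{q_0}(\pq)\subset T_{q_0}\odr(q_0)$, so $\lns(0,U_0X|_{x_0})|_{q_0}\in T_{q_0}\odr(q_0)$.

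To sweep out all $\hY\in(A_0X|_{x_0})^{\perp}$, use a direct decomposition of the admissible $U_0$'s: in the $n\leq\hn$ case one writes $U_0=A_0S+W$ with $S\in\mathfrak{so}(T_{x_0}M)$ and $W$ valued in $(\IM A_0)^{\perp}$, so that $U_0X|_{x_0}$ fills $A_0((X|_{x_0})^{\perp})\oplus(\IM A_0)^{\perp}=(A_0X|_{x_0})^{\perp}$; the case $n\geq\hn$ is handled analogously. Hence $\lns(0,\hY)|_{q_0}\in T_{q_0}\odr(q_0)$ for every $\hY\perp A_0X|_{x_0}$. Finally, $\lr(Y)=\lns(Y,A_0Y)\in T_{q_0}\odr(q_0)$ for every $Y\in T_{x_0}M$, and when $Y\perp X$ one has $A_0Y\in(A_0X|_{x_0})^{\perp}$, so $\lns(0,A_0Y)|_{q_0}\in T_{q_0}\odr(q_0)$ and therefore $\lns(Y,0)|_{q_0}=\lr(Y)|_{q_0}-\lns(0,A_0Y)|_{q_0}\in T_{q_0}\odr(q_0)$; by linearity, $\lns(Y,\hY)|_{q_0}\in T_{q_0}\odr(q_0)$ for all $Y\perp X$ and $\hY\perp A_0X$. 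Comparing the resulting horizontal and vertical dimensions with $\dim Q=n+\hn+n\hn-N(N+1)/2$ then yields the codimension bound $|\hn-n|+1$. The main delicacy is in the second step: the extension-dependent vertical correction to the Lie bracket is harmless only because the first step has already placed the full vertical space inside $T_{q_0}\odr(q_0)$.
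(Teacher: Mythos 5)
Your proof is correct and follows the same strategy the paper references from Proposition~5.20 of \cite{ChitourKokkonen}: push $V(\pq)$ from $\Phi_{\lr(X)}(t_n,q_0)$ to $q_0$ by a subbundle-continuity argument, differentiate the flow pull-back of a parallel-transported vertical vector to obtain the bracket $[\lr(X),\nu(U)]$, and apply Proposition~\ref{p4} to read off $\lns(0,U_0X|_{x_0})|_{q_0}$ modulo an extension-dependent vertical remainder that Step~1 has already absorbed. One small remark: by using the full decomposition $U_0=A_0S+W$ with $W$ valued in $(\IM A_0)^\perp$ you actually obtain $\lns(0,\hY)|_{q_0}\in T_{q_0}\odr(q_0)$ for \emph{every} $\hY\in(A_0X|_{x_0})^\perp$, so that together with $\dr|_{q_0}$ and $V|_{q_0}(\pq)$ the orbit is shown to have codimension at most $1$ (when $X|_{x_0}\neq 0$), which in particular gives the stated bound $|\hn-n|+1$.
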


\begin{corollary}\label{c5}
Suppose there is a point $\qz \in Q$ and $\epsilon > 0$ such that for every $X \in \VF (M)$ with $\Vert X \Vert_{g} < \epsilon$ on $M$, one has
$$
V |_{\Phi_{\lr(X)} (t,q_0)} (\pq) \subset T \odr(q_0), \;\;\; | t | < \epsilon.
$$
Then the orbit $\odr(q_0)$ is open in $Q$.
As a consequence, $(\Sigma)_{R}$ is completely controllable if and only if
\begin{equation}
\forall q \in Q, \;\;\; V |_{q} (\pq) \subset T_{q} \odr (q).
\end{equation}
\end{corollary}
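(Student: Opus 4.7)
The plan is to show $T_{q_0}\odr(q_0)=T_{q_0}Q$, which makes $\odr(q_0)$ open at $q_0$ and, by the orbit theorem, open everywhere. By Remark~\ref{r1}, $T_{q_0}Q=\dns|_{q_0}\oplus V|_{q_0}(\pq)$, so it suffices to lodge both summands in $T_{q_0}\odr(q_0)$.

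The vertical inclusion $V|_{q_0}(\pq)\subset T_{q_0}\odr(q_0)$ falls out at once by evaluating the hypothesis at $t=0$, since $\Phi_{\lr(X)}(0,q_0)=q_0$. For the horizontal part $\dns|_{q_0}$, I would, for each $v\in T_{x_0}M$ of norm less than $\epsilon$, extend $v$ to a vector field $X\in\VF(M)$ with $X|_{x_0}=v$ and $\Vert X\Vert_g<\epsilon$, and then feed the hypothesis along the rolling curve $\Phi_{\lr(X)}(\cdot,q_0)$ into Proposition~\ref{p21} (with any sequence $t_n\to 0$, $t_n\neq 0$). The conclusion is
\[
\lns(Y,\hat{Y})|_{q_0}\in T_{q_0}\odr(q_0)\quad\text{whenever }Y\perp v\text{ in }T_{x_0}M\text{ and }\hat{Y}\perp A_0 v\text{ in }T_{\hat{x}_0}\hat{M}.
\]
Choosing two linearly independent $v_1,v_2$ so that $v_1^{\perp}+v_2^{\perp}=T_{x_0}M$ and $(A_0 v_1)^{\perp}+(A_0 v_2)^{\perp}=T_{\hat{x}_0}\hat{M}$ (possible in the generic case $\min(n,\hat{n})\geq 2$, using that $A_0$ is injective on $(\ker A_0)^{\perp}$), the bilinearity of $\lns$ and the identity $(V_1+V_2)\oplus(W_1+W_2)=(V_1\oplus W_1)+(V_2\oplus W_2)$ upgrade this pointwise inclusion to $\dns|_{q_0}\subset T_{q_0}\odr(q_0)$, finishing the first assertion.

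For the ``as a consequence'' equivalence, the forward direction is trivial: complete controllability gives $\odr(q)=Q$ and hence $V|_q(\pq)\subset T_q Q=T_q\odr(q)$. Conversely, assume $V|_q(\pq)\subset T_q\odr(q)$ for every $q\in Q$, and fix $q_0\in Q$; for any admissible $X$ and any small $t$, the point $q_t:=\Phi_{\lr(X)}(t,q_0)$ lies in $\odr(q_0)$, so $\odr(q_t)=\odr(q_0)$ and the pointwise assumption at $q_t$ reads $V|_{q_t}(\pq)\subset T_{q_t}\odr(q_0)$, which is exactly the hypothesis of the first part at $q_0$. Hence every orbit is open in $Q$, and connectedness of $Q$ collapses the resulting partition into open orbits to a single orbit, i.e.\ $\Sigma_R$ is completely controllable.

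The delicate step is the linear-algebraic reassembly of $\dns|_{q_0}$ from the various outputs of Proposition~\ref{p21} as $v$ ranges over $T_{x_0}M$. This goes through cleanly in the generic situation $\min(n,\hat{n})\geq 2$ as sketched above; the edge case $\min(n,\hat{n})=1$ would require an extra ingredient, either by invoking the rolling lifts $\lr(v)|_{q_0}=\lns(v,A_0 v)|_{q_0}$ already lying in $\dr|_{q_0}\subset T_{q_0}\odr(q_0)$ to fill in the missing direction, or by reducing to a higher-dimensional rolling problem via Proposition~\ref{p15} or Proposition~\ref{p16}.
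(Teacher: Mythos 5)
Your argument is correct and, as far as one can tell from the reference the paper gives to Corollary 5.21 of \cite{ChitourKokkonen}, it is the same argument: split $T_{q_0}Q=\dns|_{q_0}\oplus V|_{q_0}(\pq)$ (Remark~\ref{r1}), recover the vertical summand by setting $t=0$, and recover $\dns|_{q_0}$ by invoking Proposition~\ref{p21} along rolling flows through $q_0$ in two independent directions; the ``as a consequence'' direction is the standard openness-plus-connectedness argument, handled exactly as you do. Two small remarks on the edge case $\min(n,\hn)=1$ that you rightly flag: for $\hn=1<n$ the fix you propose does work, because $\lr(v)|_{q_0}=\lns(v,A_0v)|_{q_0}$ together with $\lns\big(v_1^\perp\times\{0\}\big)|_{q_0}$ and $\lns\big(v_2^\perp\times\{0\}\big)|_{q_0}$ (with $v_1,v_2$ merely linearly independent, no need for $A_0v_1,A_0v_2$ to be) already spans $\dns|_{q_0}$, since $A_0$ is onto $T_{\hx_0}\hM$; whereas for $n=1<\hn$ the hypothesis of the corollary is vacuous (the orbit has dimension $1$ while the inclusion $V|_{q_t}(\pq)\subset T_{q_t}\odr(q_0)$ would force it to be at least $2$), so no extra ingredient is actually needed there. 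The only genuinely problematic configuration is $n=\hn=1$, where $V|_q(\pq)=\{0\}$ makes the hypothesis hold trivially while the orbit remains $1$-dimensional in a $2$-dimensional $Q$; this is a defect of the corollary's statement in that degenerate case rather than of your proof.
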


\begin{remark}\label{r5}
We will use in the next corollary the fact that we have $\dr \onq$ is involutive if and only if $Rol_q$ is vanish for all $\q \in Q$, i.e. if and only if $\hR (AX,AY )(AZ) = A(R(X, Y )Z), \text{ for all } X, Y, Z \in T_{x} M$. This is an immediate result from the equality (\ref{S}) and the decomposition of Remark \ref{r1}.
\end{remark}

\begin{corollary}\label{c4}
Assume that $n \leq \hn $. Then the following two cases are equivalent,
\begin{enumerate}
\item[$(i)$]
$\dr$ is involutive,
\item[$(ii)$]
$(M, g)$ and $(\hM , \hg)$ have constant and equal curvature.
\end{enumerate}
Moreover, if $n < \hn$ strictly, then the following two cases are equivalent,
\begin{enumerate}
\item[$(a)$]
$\widehat{\dr}$ is involutive,
\item[$(b)$]
$(M, g)$ and $(\hM , \hg)$ are both flat.
\end{enumerate}
\end{corollary}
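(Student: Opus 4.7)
The plan is to invoke Remark \ref{r5}, which reduces both equivalences to the pointwise vanishing of the rolling curvature: $\dr$ is involutive iff for every $q=(x,\hx;A) \in Q$ and all $X,Y,Z \in T_xM$,
\[
\hR(AX,AY)(AZ) = A\,R(X,Y)Z,
\]
and analogously, denoting $B=A^T$ and working on $\hat Q$, $\widehat{\dr}$ is involutive iff
\[
B\,\hR(\hX,\hY)\hZ = R(B\hX,B\hY)(B\hZ),\qquad \hX,\hY,\hZ\in T_{\hx}\hM.
\]
Everything then reduces to algebra on the curvature tensors, using the fact that $A$ is an isometric embedding when $n\leq \hn$ (so $A^TA=\id$) and $B=A^T$ is a surjection with $(\hn-n)$-dimensional kernel when $n<\hn$.

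For the first equivalence (with the implicit standing assumption $n\geq 2$): the direction (ii) $\Rightarrow$ (i) is a direct substitution, since on a constant curvature $\kappa$ manifold one has $R(X,Y)Z = \kappa\bigl(g(Y,Z)X - g(X,Z)Y\bigr)$ and analogously for $\hR$, so $A^TA=\id$ immediately yields the identity. For (i) $\Rightarrow$ (ii), I take the $\hg$-inner product of the identity with $AW$; since $A$ is isometric on $T_xM$ this gives $\hg(\hR(AX,AY)(AZ),AW) = g(R(X,Y)Z,W)$, and specializing $(X,Y,Z,W)$ to orthonormal pairs yields $\hat K_{\hM}(AX,AY) = K_M(X,Y)$. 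Because $n\leq \hn$, any orthonormal pair in $T_xM$ can be mapped to any prescribed orthonormal pair in $T_{\hx}\hM$ by a suitable isometric embedding $A$; varying the bases on both sides independently forces $K_M$ to be pointwise isotropic at each $x$ and $\hat K$ to be pointwise isotropic at each $\hx$, with a common value that is independent of $x$ and $\hx$. Hence both curvatures are the same constant.

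For the second equivalence, assume $n<\hn$ strictly. The direction (b) $\Rightarrow$ (a) is trivial. For (a) $\Rightarrow$ (b), write $W=\ker B\subset T_{\hx}\hM$, a subspace of dimension $\hn-n\geq 1$. Applying the identity with $\hX\in W$ gives $B\,\hR(\hX,\hY)\hZ = 0$, i.e.\ $\hR(\hX,\hY)\hZ \in W$ for every $\hY,\hZ\in T_{\hx}\hM$. Equivalently, $\hg(\hR(\hX,\hY)\hZ,\hat V)=0$ whenever $\hX\in W$ and $\hat V\in W^\perp$, and the pair-swap symmetry of $\hR$ converts this into $\hR(\hat V,\hZ)\hX=0$ whenever $\hat V\perp \hX$ (using that both $\dim W=\hn-n\geq 1$ and $\dim W^\perp=n\geq 1$ allow placing any orthogonal pair $(\hX,\hat V)$ into an appropriate $W$ and $W^\perp$ respectively). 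The skew-symmetry in the last two arguments then forces $\hR(\hat V,\hZ)\hY$ to lie in $\mathbb{R}\hat V$, while the skew-symmetry in the first two arguments forces it also to lie in $\mathbb{R}\hZ$; for linearly independent $\hat V,\hZ$ this yields $\hR(\hat V,\hZ)=0$, and the remaining case is trivial by antisymmetry, so $\hR\equiv 0$. Plugging $\hR=0$ back into the identity gives $R(B\hX,B\hY)(B\hZ)=0$, and since $B$ is surjective onto $T_xM$ we conclude $R\equiv 0$.

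The main obstacle is the argument in (a) $\Rightarrow$ (b): one must carefully juggle the curvature symmetries together with the freedom to vary the $(\hn-n)$-dimensional kernel $W=\ker A^T$ over all subspaces of $T_{\hx}\hM$ of the right dimension. The strict inequality $n<\hn$ is essential here, since in its absence $W=\{0\}$ and no information is extracted; this is precisely why flatness appears only in the second equivalence and not the first.
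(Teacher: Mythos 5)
Your proposal is correct, and the first equivalence $(i)\Leftrightarrow(ii)$ follows the same sectional-curvature route as the reference the paper cites. For $(a)\Rightarrow(b)$, however, you take a genuinely different path from the paper. The paper first shows that both manifolds have constant and equal sectional curvature $k$ — by combining the involutivity identity with $BB^T=\id$ to obtain $\sigma_{(X,Y)}=\hat\sigma_{(B^TX,B^TY)}$, and then varying $B$ to make $(X,Y)$ and $(B^TX,B^TY)$ independent orthonormal pairs — and only afterward kills $k$ by evaluating $\widehat{\Rol}(\hX,\hY)\hX$ with $\hX\in\ker B$, $\hY\in(\ker B)^\perp$, getting $-k\Vert\hX\Vert_{\hg}^2 B\hY=0$. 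You instead bypass the constancy step entirely: from $\hR(\hX,\hY)\hZ\in\ker B$ for $\hX\in\ker B$, the pair-swap symmetry yields $\hR(\hZ,\hat V)\hX=0$ for every orthogonal pair $(\hX,\hat V)$ (by letting $\ker B$ sweep through all $(\hn-n)$-dimensional subspaces), and then the two skew-symmetries pin $\IM\,\hR(\hat V,\hZ)$ into $\mathbb{R}\hat V\cap\mathbb{R}\hZ=\{0\}$, giving $\hR\equiv 0$ directly; flatness of $M$ then falls out from the identity and surjectivity of $B$. Your argument is more self-contained and does not rely on Schur-type constancy, while the paper's re-uses the machinery from the first equivalence; both are valid, and both correctly exploit the strict inequality $n<\hn$ to make $\ker B$ nontrivial. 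Your parenthetical standing assumption $n\geq 2$ for the first equivalence is an appropriate observation, since sectional curvature (and the statement) degenerates for $n=1$.
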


\begin{proof}
The proof of $(i)\Leftrightarrow(ii)$ is similar to that of Corollary 5.23 of Section 5 in \cite{ChitourKokkonen}.  We next turn to the proof of $(a)\Rightarrow(b)$.
Assume that $\widehat{\dr}$ is involutive i.e., for every $\hat{q}=(\hx,x;B) \in \hat{Q}, \;\;\; \hX, \hY, \hZ \in T_{\hx} \hM$,
$$
\widehat{Rol}_{\hat{q}}(\hX,\hY) \hZ = B(\hR(\hX,\hY)\hZ) - R(B \hX,B \hY)(B \hZ) = 0.
$$
Thus, we have, for any $X, Y \in T_x M$,
$$
\sigma_{(X,Y)} = g ( R (X,Y)Y,X) = g ( R (B B^{T} X,B B^{T} Y)(B B^{T} Y), X) = g ( B (\hR (B^{T} X, B^{T} Y)(B^{T} Y)), X).
$$
Since $g ( B (\hR (B^{T} X, B^{T} Y)(B^{T} Y)), X)$ = $\hg ( \hR (B^{T} X, B^{T} Y)(B^{T} Y), B^{T} X)$, one deduces that $\sigma_{(X,Y)}$ is equal to $ \hat{\sigma}_{(B^{T} X, B^{T} Y)}$.
Given any $x \in M$, $\hx \in \hM$, $X$, $Y \in T_{x} M $ and $\hX$, $\hY \in T_{\hx} \hM$, choose some vectors $X_3, . . . ,X_{n} \in T_{x} M$ and $\hX_3, . . . , \hX_{\hn} \in T_{\hx} \hM$ such that $X,Y,X_3, . . . ,X_{n}$ and $\hX , \hY , \hX_3, . . . , \hX_{\hn}$ are positively oriented orthonormal frames. We define
$$
B\hX = X, B\hY =Y,\quad
B \hX_{i} = 0; \;\;\;  i = 3, . . . , n,\quad
B \hX_{i} = 0; \;\;\;  i = n+1, . . . , \hn.
$$
Clearly, $\hat{q}=(\hx,x;B) \in \hat{Q}$ and $\sigma_{(X,Y)} = \hat{\sigma}_{(\hX,\hY)}$ for $B^{T}X = \hX $, $B^{T} Y = \hY$. Thus $(M, g)$ and $( \hM , \hg)$ have equal and constant curvature $k \in \mathbb{R}$. We need to show that $k=0$. Choose any $(\hx,x;B) \in \hat{Q}$, since $n < \hn$, choose non-zero vectors $\hX \in \ker B$ and $\hY \in (\ker B)^{\perp}$ and compute
$$
0 = \widehat{Rol}(\hX,\hY)(B) \hX = k (\hg (\hY,\hX) B \hX - \hg (\hX,\hX) B \hY) - R (B \hX, B \hY) (B \hX) = - k \Vert \hX \Vert_{\hg}^2 B \hY.
$$
However $\Vert \hX \Vert_{\hg} \neq 0$ and $B \hY \neq 0$, it follows that $k = 0$.

We now prove that $(b)\Rightarrow(a)$.
In the case where $(M, g)$ and $( \hM , \hg)$ are flat, we have $R = 0$ and $\hR = 0$ so that clearly $\widehat{Rol}(\hX,\hY)(B) \hZ = B(\hR(\hX,\hY)\hZ) - R(B \hX,B \hY)(B \hZ) = 0$ for all $(\hx,x;B) \in \hat{Q}$ and $\hX$, $\hY$, $\hZ \in T_{\hx} \hM$. This proves that $\widehat{\dr}$ is involutive.
\end{proof}

We have another equivalence relation similar to Corollary 5.24 of Section 5 in \cite{ChitourKokkonen}.

\begin{proposition}\label{p17}
Suppose that $(M,g)$ and $(\hM,\hg)$ are complete. The following cases are equivalent:
\begin{enumerate}
\item[(i)]
There exists a $\qz \in Q$ such that $\odr(q_0) $ is an integral manifold of $\dr$.
\item[(ii)]
There exists a $\qz \in Q$ such that,
$$
Rol_q (X,Y) =0, \quad \forall \q \in \odr(q_0), \; X,Y \in T_{x} M.
$$
\item[(iii)]
There is a complete Riemannian manifold $(N,h)$, a Riemannian covering map $F: N \rarrow M$ and a smooth map $G: N \rarrow \hM$ such that
\begin{enumerate}
\item[(1)]
If $n \leq \hn$, $G$ is a Riemannian immersion that maps $h$-geodesics to $\hg$-geodesics.
\item[(2)]
If $n \geq \hn$, $G$ a Riemannian submersion such that the co-kernel distribution $(\ker G_{*})^{\perp} \subset TN$ is involutive and the fibers $G^{-1} (\hx)$, $\hx \in \hM$, are totally geodesic submanifolds of $(N,h)$.
\end{enumerate}

\end{enumerate}
\end{proposition}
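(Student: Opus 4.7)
The natural approach is to establish the cyclic chain of implications $(i)\Rightarrow(ii)\Rightarrow(iii)\Rightarrow(i)$.

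First, I would dispatch $(i)\Rightarrow(ii)$ using the bracket formula of Theorem \ref{t4}. If $\odr(q_0)$ is an integral manifold of $\dr$ then for any $X,Y \in \VF(M)$ the bracket $[\lr(X),\lr(Y)]|_q = \lr([X,Y])|_q + \nu(\Rol_q(X,Y))|_q$ must lie in $\dr|_q \subset T_q \odr(q_0)$ at every $q \in \odr(q_0)$. Since the decomposition $T(\tsmthm) = \dns \oplus V(\pi)$ from Remark \ref{r1} (restricted to $Q$) is direct, and $\lr([X,Y])|_q \in \dr|_q$ while $\nu(\Rol_q(X,Y))|_q$ is vertical, the vertical piece must vanish. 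As $\nu$ is injective (being the vertical lift of a tensor tangent to $Q$), this forces $\Rol_q(X,Y)=0$ for all $q\in \odr(q_0)$ and $X,Y\in T_x M$, exactly as in Remark \ref{r5}.

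For $(ii)\Rightarrow(iii)$, which is the main step, my plan is to take $N:=\odr(q_0)$, $F:=\pqm|_N:N\to M$ and $G:=\pqhm|_N:N\to\hM$. The vanishing of $\Rol$ on the orbit together with the bracket formula forces $T_q\odr(q_0)=\dr|_q$ for every $q$ in the orbit, so by part 1 of Proposition \ref{p2} the differential $F_*$ maps $T_qN$ isomorphically onto $T_{F(q)}M$; hence $F$ is a local diffeomorphism. Endowing $N$ with the pullback metric $h:=F^*g$ makes $F$ a local isometry; by Proposition \ref{p3}(ii) and completeness of $\hM$, rolling lifts every path of $M$ starting at $x_0$, which translates into the path-lifting property for $F$, so $F$ is a Riemannian covering (hence $(N,h)$ is automatically complete). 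Through the identification $F_*:T_qN\to T_{F(q)}M$ one has $G_*=A\circ F_*$ at $q=(x,\hat x;A)$, which makes $G$ a Riemannian immersion when $n\le\hn$ and a Riemannian submersion when $n\ge\hn$, simply because $A$ is a partial isometry of maximal rank. To see that $G$ maps $h$-geodesics to $\hg$-geodesics when $n\le\hn$, I would take an $h$-geodesic $c$ in $N$, project $\gamma:=F\circ c$ to an $M$-geodesic and observe that $c$ coincides with the rolling curve $q_\dr(\gamma,q_0(c(0)))$; then $\dot{\hat\gamma}(t)=A(t)\dot\gamma(t)$ together with $\nablabar A(t)=0$ yields $\hnabla_{\dot{\hat\gamma}}\dot{\hat\gamma}=0$, i.e.\ $G\circ c$ is a $\hg$-geodesic. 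In the submersion case $n\ge\hn$, the co-kernel distribution $(\ker G_*)^\perp$ corresponds under $F_*$ to $(\ker A)^\perp$, and the parallel transport relation $A(t)=P_0^t(\hat\gamma)\circ A(0)\circ P_t^0(\gamma)$ from Proposition \ref{p1} shows that $\ker A$ and $(\ker A)^\perp$ are each preserved by parallel transport; this integrability/preservation property should translate directly into involutivity of $(\ker G_*)^\perp$ and total geodesy of the fibres $G^{-1}(\hat x)$.

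Finally for $(iii)\Rightarrow(i)$, pick any $n_0\in N$ and set $x_0:=F(n_0)$, $\hat x_0:=G(n_0)$, and $A_0:=G_*|_{n_0}\circ(F_*|_{n_0})^{-1}\in Q|_{(x_0,\hat x_0)}$ (a partial isometry of maximal rank by $(1)$ or $(2)$ of $(iii)$). I would define $\Phi:N\to Q$ by $\Phi(n):=(F(n),G(n);G_*|_n\circ (F_*|_n)^{-1})$ and check that $\Phi(N)$ is an $n$-dimensional immersed submanifold tangent to $\dr$ everywhere. This tangency is precisely the geodesic-preservation and parallel-transport-compatibility encoded in $(1)$ or $(2)$: given any $M$-geodesic $\gamma$ starting at $x_0$ with $h$-geodesic lift $c$ in $N$, the rolling curve $q_\dr(\gamma,q_0)$ equals $\Phi\circ c$, because both curves project to $\gamma$ in $M$, have the same initial point, and their vertical components satisfy the same ODE (thanks to $\nablabar A=0$ on one side and the geodesic/parallel structure of $G$ on the other). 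Uniqueness in Proposition \ref{p3} then gives $\odr(q_0)\subseteq\Phi(N)$, and because $\dim\Phi(N)=n=\dim\dr$ the orbit coincides with (an open subset of) $\Phi(N)$, proving that $\odr(q_0)$ is an integral manifold of $\dr$.

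The principal obstacle is the second implication, and within it specifically the submersion case $n\ge\hn$: verifying that $(\ker G_*)^\perp$ is involutive and that the fibres of $G$ are totally geodesic requires translating the vanishing of $\Rol$ together with the parallel-invariance of $\ker A$ into statements about $\nabla$ on $(N,h)$, which is the most delicate geometric step.
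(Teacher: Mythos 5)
Your cyclic chain $(i)\Rightarrow(ii)\Rightarrow(iii)\Rightarrow(i)$ mirrors the paper's architecture (the paper proves $(i)\Leftrightarrow(ii)$, $(i)\Rightarrow(iii)$, $(iii)\Rightarrow(ii)$ with the same $N=\odr(q_0)$, $F=\pqm|_N$, $G=\pqhm|_N$, $h=F^*g$), and your $(i)\Rightarrow(ii)$ and the reduction from $(ii)$ back to $(i)$ via involutivity of $\dr|_{\odr(q_0)}$ are exactly the paper's arguments. The substantive difference is that the paper closes the two places you explicitly flag as delicate by invoking Vilms' results on totally geodesic maps, whereas you try to argue directly and leave them open.

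Concretely: for the submersion subcase of $(ii)\Rightarrow(iii)$, the paper first proves $G$ maps $h$-geodesics to $\hg$-geodesics (your immersion-case argument works verbatim), concludes $G$ is a totally geodesic Riemannian submersion, and then cites Vilms' Theorem 3.3 to deduce total geodesy of the fibres and involutivity of $(\ker G_*)^\perp$ in one stroke. Your plan ---``$\ker A$ is preserved by parallel transport, and this should translate directly''--- is not a complete substitute: preservation of $\ker A(t)$ along rolling curves (Proposition \ref{p1}) holds for \emph{every} rolling curve, regardless of whether $\Rol$ vanishes, and by itself it does not yield the integrability of the horizontal distribution (think of the Hopf fibration). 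What makes your idea work is that once $T_qN=\dr|_q$, \emph{every} curve in $N$ is a rolling curve, so $\ker G_*$ is a parallel distribution on $(N,h)$ in the full sense; from that one indeed gets both total geodesy of the fibres and $\nabla_XY$ horizontal for $X,Y$ horizontal, hence involutivity. But this chain of deductions needs to be spelled out, and you do not do so. Similarly, in $(iii)\Rightarrow(i)$ your claim that the rolling curve and $\Phi\circ c$ ``satisfy the same ODE'' is asserted, not shown; the paper makes this precise via Vilms' Corollary 1.6 (a totally geodesic Riemannian immersion/submersion is affine, so $G$ preserves parallel transport, and then $A(t)=G_*\circ(F_*)^{-1}$ is automatically given by the parallel-transport formula that characterizes rolling). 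So: same route, same constructions, but you have two acknowledged gaps where the paper supplies the missing tool (Vilms) and you only gesture at a direct argument.
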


\begin{proof}
We will first establish the equivalence $(i)\Longleftrightarrow(ii)$ and to complete the proof, we proceed to show that $(i)\Rightarrow(iii)$ and $(iii)\Rightarrow(ii)$.

We prove $(i)\Rightarrow(ii)$. Notice that the restrictions of vector fields $\lr (X)$, with $X \in \VF(M)$, to the orbit $\odr(q_0)$ are smooth vector fields of that orbit. Thus $[\lr(X),\lr(Y)]$ is tangent to this orbit for any $X$, $Y \in \VF(M)$ and hence (\ref{S}) implies the claim.

We next prove $(ii)\Rightarrow(i)$. From (\ref{S}), it also follows that $\dr |_{\odr(q_0)}$, the restriction of $\dr$ to the manifold $\odr(q_0)$, is involutive. Since the maximal connected integral manifolds of an involutive distribution are exactly its orbits, we get that $ \odr (q_0)$ is an integral manifold of $\dr$.

We now prove $(i)\Rightarrow(iii)$. Let $N:= \odr(q_0)$ and $h:=(\pi_{Q,M} |_{N})^{*} (g)$ i.e. for $\q \in N$ and $X$, $Y \in T_{x} M$, define
$$
h(\lr(X) \onq, \lr(Y) \onq ) = g(X,Y).
$$
If $F:= \pi_{Q,M} |_{N}$ and $G:= \pi_{Q,\hM} |_{N}$, we immediately see that $F$ is a local isometry (note that $\dim (N) =n$). The completeness of $(N,h)$ follows from the completeness of $M$ and $\hM$ with Remark \ref{r9}. Hence $F$ is a surjective Riemannian covering.
Moreover, if $\overline{\Gamma} : [0,1] \rarrow N$ is a $h$-geodesic, it is tangent to $\dr$ and since it projects by $F$ to a $g$-geodesic $\gamma$, it follows again by Remark \ref{r9} that $G \circ \overline{\Gamma} = \hat{\gamma}_{\dr} (\gamma, \overline{\Gamma} (0))$ is a $\hg$-geodesic.
Therefore we have proven that $G$ is a totally geodesic mapping $N\to\hM$.

If $n\leq \hn$, then for $\q \in N$, $X$, $Y \in T_{x} M$, one has
\[
\hg(G_{*} (\lr (X) \onq ), G_{*} (\lr (Y) \onq)) = \hg (AX , AY) = g (X,Y) = h_{*} (\lr (X) \onq , \lr (Y) \onq),
\]
i.e. $G$ is a Riemannian immersion. Item $(1)$ is proved.

If $n \geq \hn$, for $\q \in N$ and $X \in T_{x} M$ such that $\lr(X)\onq \in (\ker G_{*} \onq)^{\perp}$ and $Z \in \ker A$, we have $G_{*} (\lr(Z) \onq) = AZ =0$ i.e. $\lr(Z) \onq  \in  \ker (G_{*} \onq)$ from which $g(X,Z) = h (\lr(X) \onq , \lr(Z) \onq) = 0$ for all $Z \in \ker A$. This shows that $X \in (\ker A)^{\perp}$. Therefore, for all $X$, $Y \in T_{x} M$ such that $\lr(X) \onq $, $\lr(Y) \onq \in (\ker G_{*} \onq)^{\perp}$, we get $\hg (G_{*} (\lr(X) \onq), G_{*} (\lr(Y) \onq)) = h (\lr(X) \onq,\lr(Y) \onq) $ as above. This proves that $G: N \rarrow \hM$ is a Riemannian submersion, which is also totally geodesic.
It then follows from Theorem 3.3 in \cite{Vilms}, that the
fibers of $G$ are totally geodesic submanifolds of $N$ and
that the co-kernel (i.e. horizontal) distribution $(\ker G_*)^\perp$ is involutive. Item $(2)$, and hence the implication $(i)\Rightarrow(iii)$ has been proved.

We next prove $(iii)\Rightarrow(ii)$.
Let $x_0 \in M$ and choose $z_0 \in N$ such that $F(z_0) = x_0$. Define $\hx_0 = G (z_0) \in \hM$ and $A_0 := G_{*} |_{z_0} \circ (F_{*} |_{z_0})^{-1} : T_{x_0} M \rarrow T_{\hx_0} \hM$. The fact that $\qz \in Q$ can be seen as follows: if $(iii) - (1)$ holds, we have
$$
\hg (A_0 X, A_0 Y)  =\hg (G_{*} |_{z_0} ((F_{*} |_{z_0})^{-1} X),G_{*} |_{z_0} ((F_{*} |_{z_0})^{-1} Y))  =  h ((F_{*} |_{z_0})^{-1} X,(F_{*} |_{z_0})^{-1} Y)= g (X,Y),
$$
where we used that $G$ is a Riemannian immersion. If $(iii) - (2)$ holds, take $X$, $Y \in (\ker A_0)^{\perp}$, clearly $(F_{*} |_{z_0})^{-1} X$, $(F_{*} |_{z_0})^{-1} Y \in (\ker G_{*} |_{z_0})^{\perp}$ and hence $\hg (A_0 X, A_0 Y) = g (X,Y)$ because $G$ is a Riemannian submersion.

Let $\gamma : [0,1] \rarrow M$ be a smooth curve with $\gamma (0) = x_0$. Since $F$ is a smooth covering map, there is a unique smooth curve $\Gamma : [0,1] \rarrow N$ with $\gamma = F \circ \Gamma$ and $\Gamma(0)=z_0$. Define $\hgamma = G \circ \Gamma$ and $A(t) = G_{*} |_{\Gamma (t)} \circ (F_{*} |_{\Gamma (t)})^{-1}$, $t \in [0,1]$. As before, it follows that
$q(t)=(\gamma(t),\hgamma(t); A(t)) \in Q$ for all $t \in [0,1]$ and
\begin{equation}\label{U}
\dot{\hgamma} (t) = G_{*} |_{\Gamma (t)} \dot{\Gamma} (t) = A(t) \dot{\gamma} (t).
\end{equation}
According to Theorem 3.3 in \cite{Vilms},
the subcases (1) and (2) mean, respectively,
that $G$ is a totally geodesic map, which is moreover a Riemannian (1) immersion,
(2) submersion.
By Corollary 1.6 in \cite{Vilms}, $G$ is then affine map
i.e. preserves parallel transport.
But $F$, being a Riemannian covering map,
also preserves parallel transport, i.e. is affine.
It follows that $A(t)=G_{*} |_{\Gamma (t)} \circ (F_{*} |_{\Gamma (t)})^{-1}$
also preserves parallel transport, which combined with \eqref{U}
means that $A(t)$ is the rolling curve along $\gamma$
with $A(0)=A_0$.

Since the affinity of $F$ (resp. $G$)
simply means that 
$$
\nabla_{F_*\overline{X}} (F_*(\overline{Y}))=F_*(\nabla^h_{\overline{X}} \overline{Y})
(\hbox{ resp. } \hnabla_{G_*\overline{X}} (G_*(\overline{Y}))=G_*(\nabla^h_{\overline{X}} \overline{Y})),
$$
for all vector fields $\overline{X}, \overline{Y}$ on $N$,
we easily see that
\[
& R(F_*\overline{X}, F_*\overline{Y})F_*\overline{Z}=F_*(R^h(\overline{X}, \overline{Y})\overline{Z}) \\
& \hR(G_*\overline{X}, G_*\overline{Y})G_*\overline{Z}=G_*(R^h(\overline{X}, \overline{Y})\overline{Z}),
\]
for all vector fields $\overline{X}, \overline{Y}, \overline{Z}$ on $N$.
It thus follows that for all vector fields $X,Y,Z$ on $M$,
\[
A(t)(R(X,Y)Z)=&A(t)(R(F_*\overline{X}, F_*\overline{Y})F_*\overline{Z})
=A(t)(F_*(R^h(\overline{X}, \overline{Y})\overline{Z})) \\
=&G_*|_{\Gamma(t)}(R^h(\overline{X}, \overline{Y})\overline{Z}))
=\hR(G_*|_{\Gamma(t)}\overline{X}, G_*|_{\Gamma(t)}\overline{Y})G_*|_{\Gamma(t)}\overline{Z} \\
=&\hR(A(t)X,A(t)Y)(A(t)Z),
\]
where $\overline{X}, \overline{Y}, \overline{Z}$ are any (local) $F$-lifts of $X,Y,Z$ on $N$.
This proves that
\begin{equation}\label{W}
\Rol_{q(t)} = 0.
\end{equation}
Thus we have shown that $t \mapsto (\gamma (t) , \hgamma (t); A(t))$ is the unique rolling curve along $\gamma$ starting at $\qz$ and defined on $[0,1]$ and therefore curves of $Q$ formed in this manner fill up the orbit $\odr (q_0)$. Moreover, by Eq. (\ref{W}) we have shown also that $\Rol$ vanishes on $\odr (q_0)$.
\end{proof}

\begin{remark}
As pointed out in the course of the above proof,
according to \cite{Vilms} the subcases (1)-(2) of (iii) in the previous proposition can be replaced by simply
saying that $G$ is a totally geodesic map which is a Riemannian (1) immersion, (2) submersion, respectively.
\end{remark}

The next proposition is a sufficient condition of non-controllability for the rolling system $\Sigma_{(R)}$ when $n < \hn$.
\begin{proposition}
Let $M$, $\hM$ be two Riemannian manifolds of dimensions $n$, $\hn$ with $n < \hn$. Assume that there exists a complete totally geodesic submanifold $\hN$ of $\hM$ of dimension $m$ such that $n \leq m<\hn $. Then, the rolling system $\Sigma_{(R)}$ of $Q (M, \hM)$ is not completely controllable.
\end{proposition}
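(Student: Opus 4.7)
The plan is to exhibit a proper closed submanifold of $Q=Q(M,\hM)$ that is invariant under the rolling control system $\Sigma_R$; this is enough, as by Proposition~\ref{p3} every $\Sigma_R$-orbit through a point of such a submanifold stays inside it. The natural candidate is
\[
\tilde{Q}:=\{(x,\hx;A)\in Q\mid \hx\in\hN,\ \IM A\subset T_{\hx}\hN\},
\]
which is the image of $Q(M,\hN)$ under the natural inclusion induced by $\hN\hookrightarrow\hM$. Since $n\leq m<\hn$, one can pick any $x_0\in M$, $\hx_0\in\hN$ and any isometric embedding $A_0:T_{x_0}M\to T_{\hx_0}\hN$ to see that $\tilde Q\neq\emptyset$; and a fiber dimension count (arguing as in Proposition~\ref{p10}(i)) yields $\dim Q-\dim\tilde Q=(n+1)(\hn-m)>0$, so $\tilde Q$ is a proper closed submanifold of $Q$.

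The heart of the argument will be the invariance of $\tilde Q$. Fix $q_0=(x_0,\hx_0;A_0)\in\tilde Q$ and an absolutely continuous $\gamma:[0,a]\to M$ with $\gamma(0)=x_0$. Because $\hN$ is complete, Proposition~\ref{p3}(ii) applied to $Q(M,\hN)$ produces a rolling curve $\tilde q(t)=(\gamma(t),\tilde{\hgamma}(t);\tilde A(t))\in Q(M,\hN)$ along $\gamma$ with $\tilde q(0)=q_0$, defined on all of $[0,a]$; by Remark~\ref{r9} it satisfies
\[
\tilde A(t)=P_0^t(\tilde{\hgamma})^{\hN}\circ A_0\circ P_t^0(\gamma),\qquad \dot{\tilde{\hgamma}}(t)=\tilde A(t)\dot\gamma(t),
\]
the parallel transport on the left being the intrinsic one on $\hN$. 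The key observation is now that since $\hN$ is totally geodesic in $\hM$, its second fundamental form vanishes, so parallel transport along $\tilde{\hgamma}$ with respect to $\hnabla$ preserves $T\hN$ and coincides on $T\hN$ with intrinsic parallel transport in $\hN$. Regarding $\tilde A(t)$ as a linear map $T_{\gamma(t)}M\to T_{\tilde{\hgamma}(t)}\hM$, this gives $\tilde A(t)=P_0^t(\tilde{\hgamma})^{\hM}\circ A_0\circ P_t^0(\gamma)$, hence $\tilde A(t)\in Q(M,\hM)$ and $\nablabar_{(\dot\gamma,\dot{\tilde{\hgamma}})}\tilde A=0$ by Proposition~\ref{p1}, together with the no-slipping identity $\dot{\tilde{\hgamma}}=\tilde A\dot\gamma$ inherited from above. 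Thus $\tilde q$ is also a rolling curve in $Q(M,\hM)$ along $\gamma$ starting at $q_0$, and the uniqueness in Proposition~\ref{p3} forces $q_{\dr}(\gamma,q_0)(t)=\tilde q(t)\in\tilde Q$.

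Putting this together, $\odr(q_0)\subset\tilde Q\subsetneq Q$ for every $q_0\in\tilde Q$, so $\Sigma_R$ is not completely controllable. The only delicate point is the identification of parallel transports used to transfer $\tilde q$ from $Q(M,\hN)$ to $Q(M,\hM)$; this is precisely where the hypothesis that $\hN$ is \emph{totally geodesic} (rather than merely a submanifold) is indispensable.
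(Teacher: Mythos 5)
Your proof is correct, and it establishes the same key invariance --- the subset $\tilde Q\cong Q(M,\hN)$ is preserved by every rolling curve --- that underlies the paper's argument, but it arrives there by a somewhat different route. The paper first treats geodesics (using that rolling a geodesic of $M$ yields a geodesic of $\hM$, and that a geodesic starting tangent to a complete totally geodesic $\hN$ stays in $\hN$), then passes to broken geodesics, and finally invokes a density argument to handle general absolutely continuous curves. You instead use the sharper consequence of total geodesy --- that ambient parallel transport in $\hM$ along curves lying in $\hN$ preserves $T\hN$ and coincides there with intrinsic parallel transport in $\hN$ --- together with the completeness of $\hN$ and the existence-and-uniqueness statement of Proposition~\ref{p3}, to show directly that the $Q(M,\hN)$-rolling curve along an arbitrary absolutely continuous $\gamma$ is also the $Q(M,\hM)$-rolling curve. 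This avoids the geodesic-approximation/density step entirely, and by phrasing everything in terms of the explicit invariant submanifold $\tilde Q$ (with the dimension count confirming $\tilde Q\subsetneq Q$) you make the mechanism of non-controllability more transparent. Both arguments hinge on the same geometric fact about totally geodesic submanifolds; yours packages it more economically.
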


\begin{proof}

Since $n \leq m$, we can find $q_0 = (x_0, \hx_0 ; A_0) \in Q$ such that $\hx_0 \in \hN$ and $\IM (A_0) \subset T_{\hx_0} \hN$.
We proceed to prove that $\pi_{Q,\hM} (\odr (q_0)) \subset \hN$. To this end, we will first prove that for every geodesic curve $\gamma$ on $M$ starting at any point $q=(x,\hx;A)$, with $x\in M$,
$\hx\in\hN$ and $\IM (A) \subset T_{\hx} \hN$, the resulting geodesic curve
$\hgamma_{\dr}:=\hgamma_{\dr} (\gamma , q)= \pi_{Q, \hM} (q_{\dr} (\gamma, q))$ stays in $\hN$
and that if $q_{\dr} (\gamma, q)=(\gamma,\hgamma;A_{\dr} (\gamma, q))$, then
$\IM A_{\dr} (\gamma, q)(\cdot)\subset T_{\hgamma(\cdot)} \hN$.

Once this proved, it is clearly obvious that the previous statement extends verbatim to the case where $\gamma$ is any broken geodesic curve. By a standard density argument, we conclude that the above statement is again true for any absolutely continuous curve $\gamma$ on $M$. We then prove the claim.

Let then consider a point $q=(x,\hx;A)$, with $x\in M$,
$\hx\in\hN$ and $\IM (A) \subset T_{\hx} \hN$ and a geodesic curve $\gamma: [0,1] \rarrow M$ starting at $x\in M$. Then, $q_{\dr} (\gamma , q)$ is a geodesic curve and so that $\hgamma_{\dr} (\gamma , q)$ is a geodesic curve on $\hM$ and for all $t \in [0, 1]$, we have,
\begin{align*}
\dot{\hgamma}_{\dr} (t) =& \dot{\hgamma}_{\dr} (\gamma , q) (t) =  A_{\dr} (\gamma , q) (t) \dot{\gamma} (t)
=  ( P_0^t (\hgamma_{\dr}) \circ A \circ P_t^0 (\gamma) ) \dot{\gamma} (t)\\
= & P_0^t (\hgamma_{\dr}) ( A\dot{\gamma}(0) ).
\end{align*}
By assumption $\IM (A) \subset T_{\hx} \hN$,
and therefore $A\dot{\gamma}(0) \in T_{\hx}\hN$,
which implies that $\dot{\hgamma}_{\dr} (0)\in T_{\hx}\hN$.
Since $\hN$ is a complete totally geodesic submanifold of $\hM$,
we therefore have that the geodesic $\hgamma_{\dr}(t)$ stays in $N$
for all $t\in [0,1]$.

Using the same reasoning, for a given $t\in [0,1]$,
if $X\in T_{\gamma(t)} M$, we have
$A(P_t^0(\gamma)X)\in T_{\hx} \hN$,
and hence, since $\hN$ is totally geodesic,
$A_{\dr}(\gamma,q)(t)X\in T_{\hgamma(t)} \hN$.
This combined with the fact that $A_{\dr}(\gamma,q)(t)$ preserves
the inner product $\hg$ of $\hM$, and therefore that induced on $\hN$,
means that $q_{\dr}(\gamma,q)(t)\in Q(M,\hN)$ for all $t\in [0,1]$,
which completes the proof.
\end{proof}

Since Riemannian manifolds $(\hM,\hg)$ of constant curvature contain complete totally geodesic submanifolds of any lower dimension, we get the following non-controllability result as consequence of the previous proposition.
\begin{corollary}\label{c10}
Consider a Riemannian manifold $(M,g)$ of dimension $n$ and a Riemannian manifold $(\hM,\hg)$ of constant curvature and of dimension $\hn>n$. Then the rolling problem of $(M,g)$ onto $(\hM,\hg)$  without spinning nor slipping  is not controllable.
\end{corollary}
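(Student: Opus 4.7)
The plan is a direct application of the preceding proposition: I just need to exhibit, inside $\hM$, a complete totally geodesic submanifold $\hN$ whose dimension $m$ satisfies $n\leq m<\hn$, and then invoke that proposition verbatim.

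First I would set $m=n$; since $\hn>n$ by hypothesis, this choice satisfies $n\leq m<\hn$. Next I would fix any point $\hx_0\in\hM$ and any $n$-dimensional linear subspace $V\subset T_{\hx_0}\hM$, which exists since $n<\hn$, and produce a complete totally geodesic submanifold $\hN\subset\hM$ of dimension $n$ passing through $\hx_0$ with $T_{\hx_0}\hN=V$. The existence of $\hN$ is the one substantive step, and is where the constant curvature hypothesis on $(\hM,\hg)$ enters decisively. In each of the three simply connected model spaces $\mathbb{R}^{\hn}$, $S^{\hn}$ and $H^{\hn}$, such submanifolds are given concretely by affine subspaces, great subspheres, and totally geodesic hyperbolic subspaces, respectively. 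By the Killing--Hopf theorem any complete connected $(\hM,\hg)$ of constant curvature is a quotient of one of these models by a group of isometries acting freely and properly discontinuously; since isometries preserve totally geodesic submanifolds, one can simply push the model submanifold down to $\hM$, and completeness of $\hN$ is inherited from that of $\hM$ via the fact that $\hN$ is geodesically closed.

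Once $\hN$ is in hand, the hypotheses of the preceding proposition are fulfilled with $m=n$, and the conclusion---that $\Sigma_R$ on $Q(M,\hM)$ is not completely controllable---follows immediately. The only nontrivial point is the construction of $\hN$; I expect this to be the main obstacle in the sense that it genuinely requires constant curvature (a generic Riemannian manifold admits no totally geodesic submanifold of intermediate dimension at all), but it is not difficult once one appeals to the classical structure theory of constant curvature spaces.
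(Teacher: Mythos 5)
Your proposal is correct and follows exactly the same route as the paper, which likewise deduces the corollary by invoking the preceding non-controllability proposition together with the standard fact that complete constant-curvature manifolds contain complete totally geodesic submanifolds of every lower dimension. The one point worth being slightly more careful about is the ``push down'' step: the image of a totally geodesic $\tilde N\subset X$ under the covering $X\to\hM$ need not be \emph{embedded} (on a flat torus a line of irrational slope projects to a dense immersed curve), but it is still a complete immersed totally geodesic submanifold, and the proof of the cited proposition only ever uses that geodesics and parallel transport starting tangent to $\hN$ remain tangent to $\hN$, so this is harmless.
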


\section{Appendix}\label{app0}

In this section we briefly show how one writes
the control system $\Sigma_{(R)}$ in local orthonormal frames.

Let $(F_i)_{1\leq i\leq n}$ and $(\hat{F}_j)_{1\leq j\leq \hn}$ be local oriented orthonormal frames on $M$ and $\hat{M}$ respectively and let $q_0=(x_0,\hat{x}_0;A_0)\in Q$
such that $x_0$, $\hat{x}_0$ belong to the domains of definition $V$ and $\hat{V}$ of the frames.
Let $q(t)=(\gamma(t),\hat{\gamma}(t);A(t))$, $t\in [0,1]$, be a curve in $Q$ so that
$\gamma\subset V$ and $\hat{\gamma}\subset \hat{V}$.
For every $t\in [0,1]$, define the unique element $\mc{R}(t)$ in $\SO(n,\hn)$ verifying
\[
\big(A(t)F_1|_{\gamma(t)},\dots,A(t)F_n|_{\gamma(t)}\big)
=\big(\hat{F}_1|_{\hat{\gamma}(t)},\dots,\hat{F}_{\hn}|_{\hat{\gamma}(t)}\big)\mc{R}(t)
\]

Define Christoffel symbols $\Gamma\in T^*_x M\otimes \mathfrak{so}(n)$ and $\hat{\Gamma}\in T^*_{\hat{x}} \hat{M}\otimes \mathfrak{so}(\hn)$ by
\[
\Gamma(X)_i^l=g(\nabla_X F_i, F_l),
\quad
\hat{\Gamma}(\hat{X})_j^k=\hat{g}(\hat{\nabla}_{\hat{X}} \hat{F}_j, \hat{F}_k),
\]
with $1\leq i,k\leq n$, $1\leq j,k\leq \hn$ and $X\in T_x M$, $\hX\in T_{\hx}\hM$.

There are unique measurable functions $u^i:[0,1]\to\R$, $1\leq i\leq n$, such that, for a.e. $t\in [0,1]$,
\[
\dot{\gamma}(t)=&\big(F_1|_{\gamma(t)},\dots,F_n|_{\gamma(t)}\big)\qmatrix{u^1(t)\cr \vdots \cr u^n(t)}.
\]
As one can easily verify,
the conditions of no-slip \eqref{CN} and no-spin \eqref{CM}
translate for $(\hat{\gamma}(t),\mc{R}(t))\in \hat{M}\times\SO(n)$
precisely to
\[
\textrm{(no-slip)}\quad & \dot{\hat{\gamma}}(t)=\big(\hat{F}_1|_{\hat{\gamma}(t)},\dots,\hat{F}_{\hn}|_{\hat{\gamma}(t)}\big)\mc{R}(t)\qmatrix{u^1(t)\cr \vdots \cr u^n(t)}, \\
\textrm{(no-spin)}\quad & \dot{\mc{R}}(t)=\mc{R}(t)\Gamma(\dot{\gamma}(t))-\hat{\Gamma}(\dot{\hat{\gamma}}(t))\mc{R}(t),
\]
for a.e. $t\in [0,1]$. Moreover, the latter no-spin condition can also be written as
\[
\dot{\mc{R}}(t)=\sum_{i=1}^n u^i(t)\Big(\mc{R}(t)\Gamma(F_i|_{\gamma(t)})-\sum_{j=1}^{\hn} \mc{R}_{ji}(t)\hat{\Gamma}(\hat{F}_j|_{\hat{\gamma}(t)})\mc{R}(t)\Big),
\]
for a.e. $t\in [0,1]$, where $\mc{R}_{ji}(t)$ is the element at $j$-th row, $i$-th column of $\mc{R}(t)$. From this local form, one clearly sees that the rolling system $\Sigma_R$
is a driftless control affine system (see \cite{agrachev04,jurd} for more details on control systems).

\end{document}